\documentclass[lettersize,journal]{IEEEtran}
\def\BibTeX{{\rm B\kern-.05em{\sc i\kern-.025em b}\kern-.08em
T\kern-.1667em\lower.7ex\hbox{E}\kern-.125emX}}
\usepackage{balance}

\usepackage{array}
\usepackage{amsmath,amssymb,amsfonts,amsthm}
\newtheorem{theorem}{Theorem}
\newtheorem{lemma}{Lemma}

\newtheorem{definition}{Definition} 

\usepackage{graphicx}
\usepackage{epstopdf}
\usepackage{subfigure}
\usepackage{url}
\usepackage{bm}
\usepackage{caption}
\usepackage{multirow}
\usepackage[T1]{fontenc}
\usepackage{stfloats}
\usepackage{tabularx}
\usepackage{booktabs}
\usepackage{lipsum}
\usepackage{mathrsfs}
\usepackage{enumitem} 

\usepackage{tabularx}
\usepackage{cite}
\usepackage[switch]{lineno}
\usepackage{multirow}
\usepackage{makecell}
\usepackage{ amssymb,lineno,url}
\usepackage{amssymb,amsthm}
\usepackage{color}
\usepackage{verbatim}

\usepackage{makecell}
\usepackage{algorithm}
\usepackage{algpseudocode}
\usepackage{ bm}
\usepackage{multirow}
\usepackage{float}
\usepackage{booktabs}

\usepackage[numbers]{natbib}           
\usepackage[colorlinks]{hyperref}    
\usepackage{ragged2e}

\usepackage{lipsum} 
\usepackage{caption} 
\usepackage{adjustbox} 
\usepackage{tikz}

\newtheorem{remark}{Remark}
\newcommand*\circled[1]{\tikz[baseline=(char.base)]{
\node[shape=circle,draw,inner sep=0.5pt] (char) {#1};}}

\begin{document}
\title{Feature Incremental Clustering with Generalization Bounds}
\author{Jing~Zhang, and Chenping~Hou*,~\IEEEmembership{Member,~IEEE}
\IEEEcompsocitemizethanks{\IEEEcompsocthanksitem Jing~Zhang is with Key Laboratory of Computing and Stochastic Mathematics (Ministry of Education), School of Mathematics and Statistics, Hunan Normal University, Changsha, Hunan 410081, P. R. China. }
\thanks{Chenping~Hou is with College of Liberal Arts and Sciences, National University of Defense Technology, Changsha, Hunan, 410073, China.}
\thanks{E-mail: zhangjing\_nudt@163.com, hcpnudt@hotmail.com.}
\thanks{*Chenping Hou is the corresponding author.}
}

\maketitle

\begin{abstract}
In many learning systems, such as activity recognition systems, 
as new data collection methods continue to emerge in various dynamic environmental applications, the attributes of instances accumulate incrementally, with data being stored in gradually expanding feature spaces. How to design theoretically guaranteed algorithms to effectively cluster this special type of data stream, commonly referred to as activity recognition, remains unexplored. Compared to traditional scenarios, we will face at least two fundamental questions in this feature incremental scenario. (i) How to design preliminary and effective algorithms to address the feature incremental clustering problem? (ii) How to analyze the generalization bounds for the proposed algorithms and under what conditions do these algorithms provide a strong generalization guarantee? To address these problems, by tailoring the most common clustering algorithm, i.e., $k$-means, as an example, we propose four types of Feature Incremental Clustering (FIC) algorithms corresponding to different situations of data access: Feature Tailoring (FT), Data Reconstruction (DR), Data Adaptation (DA), and Model Reuse (MR), abbreviated as FIC-FT, FIC-DR, FIC-DA, and FIC-MR. Subsequently, we offer a detailed analysis of the generalization error bounds for these four algorithms and highlight the critical factors influencing these bounds, such as the amounts of training data, the complexity of the hypothesis space, the quality of pre-trained models, and the discrepancy of the reconstruction feature distribution. The numerical experiments show the effectiveness of the proposed algorithms, particularly in their application to activity recognition clustering tasks.
\end{abstract}

\begin{IEEEkeywords}
Clustering, Feature Increment, Generalization Bound.
\end{IEEEkeywords}

\section{Introduction} 
\label{sec1}
Clustering stands as a fundamental problem in data analysis, extensively explored across various domains including image segmentation, bioinformatics, data science, and engineering \cite{shi2000normalized,li2019clustering,Mishro2021}. Traditional clustering approaches are often constrained to closed environments \cite{Peng2022,Zhou2025}, requiring the space of features to be maintained consistently across both the training and testing phases. Although the scope of clustering has been extended to dynamic environments (e.g., the clustering of data stream \cite{fahy2018ant,sui2020dynamic, chen2022two,Urio2025}), the existing literature predominantly considers time-varying data distributions under a fixed feature manifold. Such a paradigm fails to account for many practical tasks in open environments, such as feature shifts (feature increment or feature decrement \cite{cheng2025tabfsbench}). This study focuses specifically on the former-the scenario of feature increment, which is a critical yet under-explored challenge. 
For instance, in activity recognition, different clusters involve different kinds of exercises \cite{yan2015egocentric}. At the beginning, a few sensors are usually worn on the body during warm-up. As the warm-up phase comes to an end, additional sensors may be employed to monitor subsequent formal exercises. Then new types of features can be collected with new sensors and accumulated to the old ones. Besides, in the context of clustering news articles \cite{ors2020event}, the emergence of news sites can expand the dictionary of keywords, thereby introducing new types of features. This illustrates another example of incremental clustering with new features. This shift violates the assumption of a static feature space, which not only compromises input consistency but also inevitably leads to a decline in clustering accuracy and robustness. Such challenges motivate the design of adaptive clustering algorithms that can handle the feature increment.
\begin{figure}[!t]
\centering
\includegraphics[width=0.4\textwidth]{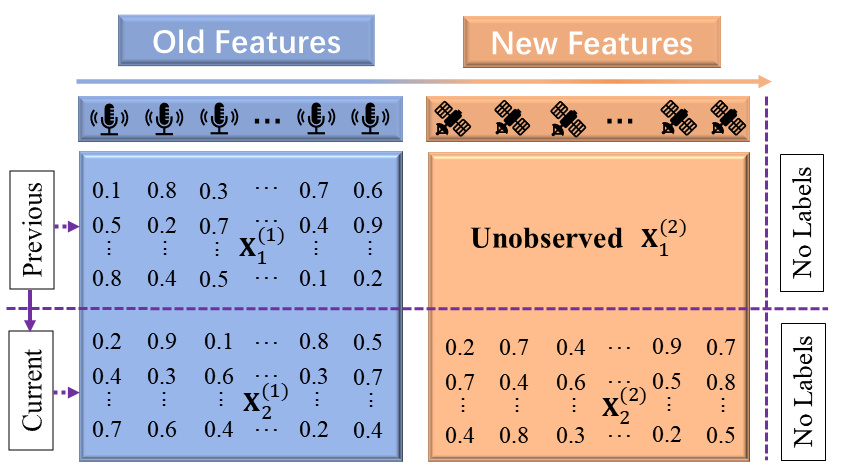}
\vskip -0.08 in
\caption{Diagram illustrating the feature incremental clustering scenario in a dynamic open environment. In the activity recognition task, let $\mathbf{X}^{(1)}_{1}$ be the old feature obtained by the previous sensors in the previous stage; In the current stage, new features $\mathbf{X}^{(2)}_{2}$ and old features $\mathbf{X}^{(1)}_{2}$ are collected by the new and the old sensors, respectively. Then, $\mathbf{X}_{2}=[\mathbf{X}^{(1)}_{2},\mathbf{X}^{(2)}_{2}]$ represents new data in the current stage.}
\label{scenariofig}	
\end{figure}

To better understand the scenario mentioned above, we provide a schematic diagram shown in Fig.\,\ref{scenariofig}. Specifically, in the feature incremental clustering scenario, data collection is partitioned into two stages: the previous 
stage and the current stage. Take the activity recognition clustering task as an example, during the warm-up exercise, five sensors are positioned on the body to gather features of the previous stage, viewed as old features $\mathbf{X}_{1}^{(1)}$. After completing the warm-up phase, to enhance activity monitoring, we introduce additional four sensors to collect features generated in the current stage, viewed as new features $\mathbf{X}_{2}^{(2)}$. These two stages are associated with distinct feature spaces: the old feature space $\mathcal{X}_{p}$, where $\mathbf{X}_{1}^{(1)}\in  \mathcal{X}_{p}$, and the new feature space $\mathcal{X}_{c}$, where $\left[\mathbf{X}^{(1)}_{2},\mathbf{X}^{(2)}_{2}\right]\in \mathcal{X}_{c}$. Besides, it is important to note that there are typically no labels available for the data at each stage in feature incremental clustering. Due to the cost of collecting new feature attributes, there are usually only a limited number of data points available in the current stage with new features.

Given this data context, some similar studies have emerged to address the feature incremental challenge in the classification tasks. For instance, in tackling the challenge posed by feature-evolving streaming data, Hou et al. present a classification model that leverages the relationship between existing and newly introduced features \cite{hou2017learning}. Subsequently, numerous classification algorithms have been proposed to address different aspects of feature evolution, including feature shifts \cite{hou2018one, sadreddin2021incremental}, safe integration of new features \cite{hou2019safe}, joint feature and distribution shifts \cite{zhang2020learning}, and simultaneous expansion of feature and class spaces \cite{hou2023incremental}. Other related studies can be found in \cite{shu2023incremental, cheng2025tabfsbench}. Different from our setting, a common aspect among these studies is that they all focus on classification scenarios, whereas our article centers on clustering tasks without any label information. Consequently, feature incremental clustering problems even more challenging. 

Unlike traditional scenarios, feature incremental clustering raises two fundamental questions: (i) how can we design preliminary and effective algorithms to address the feature incremental clustering problem? (ii) from a theoretical perspective, how do we analyze the generalization bounds for the proposed algorithms, and under what conditions do these algorithms provide a strong generalization guarantee?

For the first question, traditional clustering algorithms are unsuitable for direct application due to our unique incremental setting. We need to design a specialized algorithm for this scenario. For the second question, existing statistical theories cannot be directly applied to the feature-incremental problem, even though they have been extensively developed for static scenarios such as such as $k$-means \cite{antos2005improved, tang2016lloyd, zhang2023imbalanced}, kernel $k$-means \cite{biau2008performance, li2021sharper, liu2021refined, yin2022randomized, liang2024consistency}, and spectral clustering \cite{li2023understanding}. Overall, feature incremental clustering remains a critical yet underexplored problem, necessitating the development of methods that are both practically effective and theoretically sound.

To tackle the challenges mentioned in this new learning scenario, we extend the standard $k$-means algorithm to propose four Feature Incremental Clustering (FIC) methods tailored to different data access conditions: Feature Tailoring (FT), Data Reconstruction (DR), Data Adaptation (DA), and Model Reuse (MR). These algorithms are abbreviated as FIC-FT, FIC-DR, FIC-DA, and FIC-MR. We then investigate the generalization error bounds of these four algorithms, providing a detailed analysis of the key factors influencing these bounds. Specifically, the generalization performance of FIC-FT is primarily influenced by the clustering capability of the old features; FIC-DRis sensitive to the discrepancy in the reconstructed feature distribution; FIC-DA is mainly constrained by the sample size; 
and FIC-MR is governed by the quality of the pre-trained model. Notably, for FIC-MR, a high-quality pre-trained model tightens the bound, achieving a faster convergence rate of $\tilde{\mathcal{O}}(\frac{1}{n_2})$ instead of $\tilde{\mathcal{O}}(\sqrt{\frac{k}{n_2}}+\frac{1}{n_2})$. 
In addition to these theoretical results, we validate and compare the generalization performance of the four models through numerical experiments. Considering both storage requirements and performance, FIC-MR may be the preferred choice for feature incremental clustering.

The major contributions are summarized as follows:
\begin{itemize}
\item We propose four feature incremental clustering algorithms, representing, to the best of our knowledge, the first comprehensive investigation into this important yet rarely studied problem.
\item We derive generalization error bounds for the proposed algorithms and analyze the critical factors influencing their tightness.
\item We systematically evaluate the proposed algorithms across diverse datasets. Numerical results validate their effectiveness and support our theoretical findings.

\end{itemize}

The rest of this paper is organized as follows. Section\,\ref{BackgroundSection} introduces relevant notations and provides an overview of $k$-means. Section\,\ref{Feature Incremental Clustering Section} develops four feature incremental clustering algorithms based on the framework of $k$-means. Section\,\ref{Generalization Error bound Analyses Section} establishes the generalization error bounds of the four algorithms. Section\,\ref{Experiments Section} presents experimental results on various real-world datasets. Section\,\ref{ConclusionSection} concludes this paper. 

\section{Background}
\label{BackgroundSection}
\subsection{Notations} 
In Fig.\,\ref{scenariofig}, we consider a $k$-class clustering task spanning two sequential stages:
(i) \textbf{Previous Stage}: Let $\mathcal{X}_{p}$ be the previous feature space. Denote the previous data set as $ D_p = \{\mathbf{x}_i^{(1)} \in \mathbb{R}^{d_1}\}_{i=1}^{n_1}$, where $n_1$ denotes the number of instances and $d_1$ stands for the dimensions of old features. Let $\mathbf{X}_1^{(1)} \in \mathbb{R}^{n_1 \times d_1}$ denote the data matrix. Each row $\mathbf{x}_i^{(1)}$ corresponds to the $i$-th instance. (ii) \textbf{Current Stage}: New instances arrive in an expanded feature space $\mathcal{X}_{c}$ that includes both old and new features, are represented as a set $ D_c=\{\mathbf{x}_i\in \mathbb{R}^{d_1+d_2}\}_{i=1}^{n_2}$, where $d_2$ is the dimensions of new features, and $n_2$ denotes the number of instances. To elaborate further, instances involving the previous features during the current stage are denoted as a set $D_{c^{p}}=\{\mathbf{x}_i^{(1)} \in \mathbb{R}^{d_1}\}_{i=n_1+1}^{n_1+n_2}$. Furthermore, instances containing the new features in the current stage are represented as $\{ \mathbf{x}_i^{(2)} \in \mathbb{R}^{d_2}\}_{i=1}^{n_2}$. 
For ease of reference, let $\mathbf{X}_2 = [\mathbf{X}_2^{(1)},\mathbf{X}_2^{(2)}] \in \mathbb{R}^{n_2\times (d_1+d_2)}$ represent the data matrix in the current stage, where $\mathbf{X}_2^{(1)} \in \mathbb{R}^{n_2 \times d_1}$ and $\mathbf{X}_2^{(2)} \in \mathbb{R}^{n_2 \times d_2}$ correspond to the data matrices of the old features and new features, respectively. 
(iii) \textbf{Testing Stage}:  Test data $\mathbf{X}_t = [\mathbf{X}_t^{(1)}, \mathbf{X}_t^{(2)}] \in \mathbb{R}^{n_t \times (d_1+d_2)}$ contains both old and new features, with $n_t$ denoting the number of test instances. 
Additional notation will be defined upon its first use to ensure clarity.

\subsection{$k$-Means }
Denote $\mathbb{Q}$ as an unknown distribution over the input space $\mathcal{X}$. Let $D =\left\{\mathbf{x}_1,\dots,\mathbf{x}_n\right\} \subset \mathcal{X}$ be a sample set generated i.i.d from unknown distribution$\mathbb{Q}$. Let $\mathcal{H}$ be a separable Hilbert space, with inner product $\langle\cdot,\cdot\rangle$  and the corresponding norm $\lVert\cdot\rVert$. $k$-means partitions  $\left\{\mathbf{x}_i\right\}_{i=1}^n$ into $k$ clusters by optimizing the cluster centers $\left\{\mathbf{u}_s\right\}_{s=1}^k$. The objective is to minimize the empirical clustering risk, defined as the squared norm criterion:
\begin{equation}
\label{k-meansOb}
\hat{\mathcal{L}}_{D}(\mathbf{U}):= \frac{1}{n} \sum_{i=1}^{n}\min _{s=1, \ldots, k}\lVert\mathbf{x}_i-\mathbf{u}_s\rVert^2
\end{equation}
on the all possible cluster centers $\mathbf{U}=\left[\mathbf{u}_1,\dots,\mathbf{u}_k\right]\in \mathcal{H}^{k}$.
Simply, an instance $\mathbf{x}_{i}$ is assigned to the $s$-th cluster if the squared distance between $\mathbf{x}_{i}$ and the cluster center $\mathbf{u}_s$ is the smallest among all centers.
The performance of a clustering approach under distribution $\mathbb{Q}$ is usually measured by the expected clustering risk (see Definition\,\ref{expected clustering risk definition}).
\begin{definition}[\cite{biau2008performance}]The expected clustering is defined as
\label{expected clustering risk definition}
\begin{equation}
\mathcal{L}_{\mathbb{Q}}(\mathbf{U}):=\int \min _{s=1, \ldots, k}\lVert \mathbf{x}-\mathbf{u}_s \rVert^2d \mathbb{Q}(\mathbf{x}).
\end{equation}
\end{definition}	
\noindent The Empirical Risk Minimizer (ERM) is defined as follows:
\begin{definition}[\cite{biau2008performance}]
\label{emprical center define}
Given the empirical squared norm criterion $\hat{\mathcal{L}}_{D}(\mathbf{U})$,  the definition of the empirical risk minimizer is given by
\begin{equation}
\tilde{\mathbf{U}}:= \mathop {\arg\min }\limits_{\mathbf{U}\in \mathcal{H}^{k}}	\hat{\mathcal{L}}_{D}(\mathbf{U}).
\end{equation}
\end{definition}

In this work, we implement our proposed algorithms within the $k$-means framework, leveraging its well-established theoretical foundations. Nevertheless, the methodological principles introduced here are widely applicable and can be extended to other advanced clustering algorithms without loss of generality. We implement our algorithms within the $k$-means framework due to its solid theoretical foundation. However, \textbf{the proposed principles are general and can be readily extended to other clustering methods.}

Our goal is to develop algorithms that leverage prior information and limited current data, together with a comprehensive theoretical analysis, which we present step by step in the following sections.

\begin{figure*}[!t] 
	\centering
	\includegraphics[width=0.95\textwidth,height=0.6\textwidth]{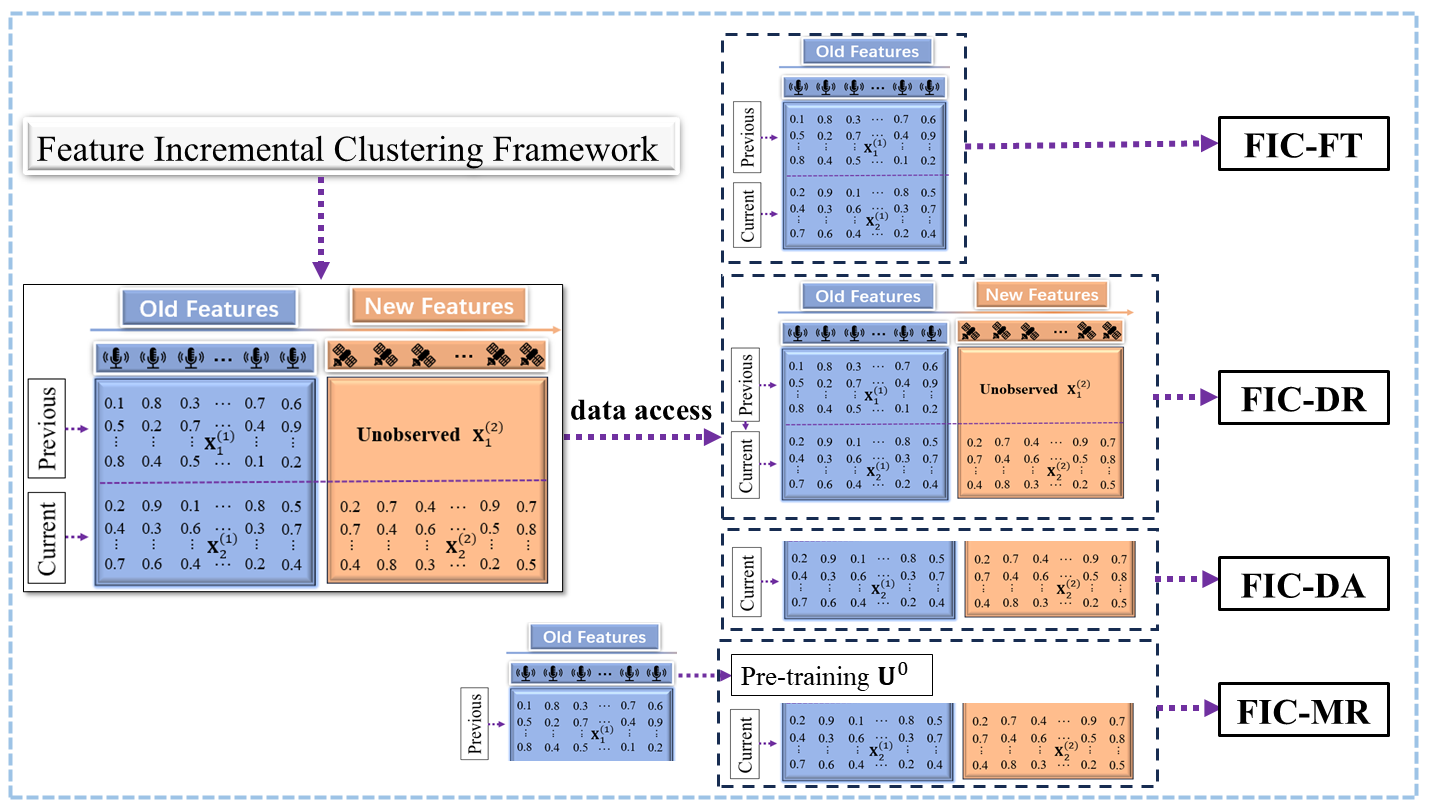}
	\vskip -0.05in
	\caption{Illustration of the four feature incremental clustering.}
	\label{fourmethodsfig}	
\end{figure*}

\section{Feature Incremental Clustering} 
\label{Feature Incremental Clustering Section}
To the best of our knowledge, research on feature incremental learning has been largely confined to classification tasks, leaving it under-explored in the development of Feature Incremental Clustering (FIC) models. Due to the inherent mismatch in feature dimensionality over time, directly extending conventional clustering paradigms to the FIC setting is non-trivial. 
In this section, we present a systematic exploration of FIC by introducing four distinct algorithmic frameworks: Feature Tailoring (FT), Data Reconstruction (DR), Data Adaptation (DA),  and Model Reuse (MR), denoted as FIC-FT, FIC-DR, FIC-DA, and FIC-MR. This is an initial exploration of FIC algorithms, marking the first example of such research. 

The four proposed clustering algorithms, based on different strategies of data access, are illustrated in Fig.\,\ref{fourmethodsfig}. When data from the previous stage is accessible, both FIC-FT and FIC-DR can be applied by combining information from these two stages. Specifically, FIC-FT discards the new features from the current stage and merges the old features from both the previous and current stage data. Its objective is to train a clustering model utilizing only the old features from both stages. In contrast, FIC-DR first recovers the unobserved features from the previous stage data. The reconstructed data is then combined with the current stage data, which includes both old and new features, to train a clustering model. On the other hand, when the information from the previous stage is completely unavailable, such as data feature or model information,  FIC-DA can be employed to address this scenario. This algorithm trains a clustering model utilizing only the limited data available in the current stage. Finally, when the feature information from the previous stage is inaccessible but the clustering model is preserved, we apply FIC-MR. This algorithm leverages the pre-trained model as a prior to guide and improve clustering performance in the current stage.

Next, we detail the four proposed FIC algorithms.

\subsection{FIC-FT Algorithm}
In this subsection, we introduce Feature Tailoring for Feature Incremental Clustering (FIC-FT). 

Specifically, when data from the previous stage is accessible, FIC-FT addresses the feature-mismatch problem by aligning feature spaces across stages. It discards the newly observed features in the current stage to create a unified dataset $D_p \cup D_{c^p}$ for training (see Fig.\,\ref{fourmethodsfig} for the data access scheme). Consequently, the $k$-means algorithm can be applied to develop a clustering model based on this tailored dataset. 

Let $\mathbf{U}_1\in \mathcal{H}^{k}$ be all possible cluster centers for FIC-FT.  According to the squared norm criterion, the objective function of FIC-FT is formulated as:
\begin{equation}
\label{Feature Tailoring_Ob}
\min _{\mathbf{U}_1 \in \mathcal{H}^k } \frac{1}{n_1+n_2} \sum_{\mathbf{x}_i\in D_p \bigcup D_{c^p}} \min _{s=1, \ldots, k}\lVert\mathbf{x}_i-\mathbf{u}_s\rVert^2,
\end{equation}
where the cluster centers $\mathbf{U}_1=[\mathbf{u}_{1},\ldots,\mathbf{u}_{k}]\in \mathbb{R}^{d_1 \times k}$ is optimized from the dataset $D_p \bigcup D_{c^p}$.

\subsection{FIC-DR Algorithm}
We introduce Data Reconstruction for Feature Incremental Clustering (FIC-DR), which addresses the scenario where previous-stage data is accessible by jointly training on data from both previous and current stages.

Specifically, in the first step, the unobserved features from the previous stage (i.e., $\mathbf{X}_{1}^{(2)}$) are treated as missing new features and are approximately reconstructed using the current-stage data, which contains both old and new features, together with the previous-stage data containing only old features. This reconstruction is based on a mild assumption that the newly introduced features are correlated with existing ones, an assumption commonly adopted in feature incremental learning \cite{Hou2023,Gu2025}, and often satisfied in practice, e.g., when new features originate from upgraded sensors or extended dimensions within the same modality.
Obviously, the existing feature completion methods can be adapted to recover the missing features from the previous stage and obtain a recovered feature matrix $\tilde{\mathbf{X}}_{1}^{(2)}$. 
For the second step, the first step enables us to combine data from the two stages (i.e., $ [\tilde{\mathbf{X}} _{1}, \mathbf{X} _{2}]$ or $ \tilde{D}_p \bigcup D_{c} $), where $\tilde{\mathbf{X}} _{1} = [\mathbf{X} _{1}^{(1)},\tilde{\mathbf{X}}_{1}^{(2)}]$, $\mathbf{X} _{2} = [\mathbf{X} _{2}^{(1)},\mathbf{X}_{2}^{(2)}] $, and $\tilde{D}_p = \left\{\tilde{\mathbf{x}}_1,\dots, \tilde{\mathbf{x}}_{n_1}\right\}$ is the reconstructed dataset in the previous stage.  Then, $k$-means algorithm can be employed to train a model on the combined dataset $ \tilde{D}_p \bigcup D_{c} $ to optimize the cluster centers $\mathbf{U}_2\in \mathcal{H}^{k}$.
Below, we offer a completion method used for the first step of FIC-DR. 

Based on the framework of $k$-means, we introduce a reconstructed method to adapt to our scenario of learning an unobserved feature matrix.
More precisely, let $\mathbf{X}_{\text{all}} =\left[\mathbf{X}_{1},\mathbf{X}_{2}\right]$, where $\mathbf{X} _{1} = \left[\mathbf{X} _{1}^{(1)},\mathbf{X}_{1}^{(2)}\right]$ and $\mathbf{X} _{2} = \left[\mathbf{X} _{2}^{(1)},\mathbf{X}_{2}^{(2)}\right] $.  
Our goal is to learn the unobserved feature matrix to obtain a recovered version $\tilde{\mathbf{X}}_{1}^{(2)}$ that closely approximates the true feature matrix $\mathbf{X}_{1}^{(2)}$.
Importantly, the optimization process of the proposed method maintains the observed features $\mathbf{X}_{1}^{(1)}$ and $\mathbf{X}_{2}$ unchanged.
The objective function of this reconstructed method can be expressed as follows:
\begin{equation}
\label{Data Reconstruction_Ob}
\begin{aligned}
\begin{split}
&\min _{\mathbf{U} \in \mathcal{H}^k,\mathbf{X}_{\text{all}} }\frac{1}{n_1+n_2} \sum_{i=1}^{n_1+n_2} \min _{s=1, \ldots, k}\lVert\mathbf{x}_i-\mathbf{u}_s\rVert^2\\
& \ \ \quad \mathrm{ s.t. } \quad \mathbf{X}_{1}^{(1)} =\mathbf{X}_{1}^{(1)}, \ \ \mathbf{X}_{2}=\mathbf{X}_{2}, 
\end{split}
\end{aligned}
\end{equation}
where unobserved feature matrix $\mathbf{X}_{1}^{(2)}$ can be reconstructed by this objective function. The optimization of this objective function follows a procedure similar to that of $k$-means.

\textbf{It is important to note that the approaches for recovering the unobserved feature $\mathbf{X}_{1}^{(2)}$ can be varied. } 
For example, the techniques proposed in \cite{pmlr-v119-muzellec20a,pmlr-v80-ye18c} can be readily adapted to reconstruct $\mathbf{X}_{1}^{(2)}$ using optimal transport strategies and feature meta-information encoding. 

\subsection{FIC-DA Algorithm}
This subsection introduces Data Adaptation for Feature Incremental Clustering (FIC-DA). When previous-stage data is inaccessible, the most straightforward strategy is to train the model exclusively on the limited data $D_c$ available from the current stage(see Fig.\,\ref{fourmethodsfig}).

Let $\mathbf{U}_3\in \mathcal{H}^{k}$ denote all possible cluster centers of FIC-DA. The objective function of FIC-DA is given by
\begin{equation}
\label{Data Adaptation_Ob}
\min _{\mathbf{U}_3 \in \mathcal{H}^k }\frac{1}{n_2} \sum_{\mathbf{x}_i\in D_c} \min _{s=1, \ldots, k}\lVert\mathbf{x}_i-\mathbf{u}_s\rVert^2.
\end{equation}
One drawback of employing FIC-DA is that, in some scenarios, limited data in the current stage may prevent the development of a robust model. 

\subsection{FIC-MR Algorithm}
In practice, access to previous data is often restricted by storage or privacy constraints. However, saving the parameters of a well-trained model from the previous stage remains feasible. In this case, we introduce Model Reuse for Feature Incremental Clustering (FIC-MR), which leverages the pre-trained model from the previous stage alongside the limited data available in the current stage(see Fig.\,\ref{fourmethodsfig}). 

Specifically, the pre-trained model from the previous stage provides the initial cluster centers $\mathbf{U}_0=[\mathbf{u}^{0}_{1},\ldots,\mathbf{u}^{0}_{k}]\in \mathbb{R}^{d_1\times k}$.
In the current stage, each cluster center is extended to a higher-dimensional space, denoted as 
$\mathbf{u}_s = [\mathbf{u}_s^{(1)}, \mathbf{u}_s^{(2)}] \in \mathbb{R}^{d_1 + d_2}$. It is reasonable to assume that the sub-components $\mathbf{u}_s^{(1)}$ are consistent with the pre-trained centers $\mathbf{u}_s^0$. Instead of independent learning, FIC-MR incorporates the prior model $\mathbf{u}_s^0$ into the current stage through a consistency constraint. By optimizing the cluster centers $\mathbf{U}_4 \in \mathcal{H}^k$ under this guidance, the objective function is formulated as:
\begin{equation}
\label{modelresue_Ob}
\min _{\mathbf{U}_4 \in \mathcal{H}^k}(\frac{1}{n_2} \sum_{\mathbf{x}_i\in D_c} \min _{s=1, \ldots, k}\lVert\mathbf{x}_i-\mathbf{u}_s\rVert^2 + \theta \lVert\mathbf{U}_4^{(1)} - \mathbf{U}^{0}\rVert^2),
\end{equation}
where $\mathbf{U}_4 = [\mathbf{U}_4^{(1)}; \mathbf{U}_4^{(2)}] \in \mathbb{R}^{(d_1+d_2) \times k}$ partitions the centers into old and new feature components.
Here, $\theta$ serves as a positive trade-off parameter. 

While the first three FIC algorithms can be solved using by referring to $k$-means algorithm, FIC-MR incorporates a regularization term that necessitates a more specialized optimization strategy. To this end, we develop a new iterative algorithm to minimize the objective function in Eq.\,(\ref{modelresue_Ob}). Specifically, the objective function is reformulated as:
\begin{equation}\label{modelresue_Ob3}
\min _{\mathbf{U}_4 \in \mathcal{H}^k}(\frac{1}{n_{2}} \sum_{s=1}^k \sum_{\mathbf{x}_i \in (\mathcal{C}_s\cap D_c)}(\lVert\mathbf{x}_i-\mathbf{u}_s \rVert^2)+ \theta \sum_{s=1}^k\lVert\mathbf{u}_s^{(1)} - \mathbf{u}_s^{0}\rVert^2).
\end{equation}
Here, $\mathcal{C}_s$ is the $s$-th cluster associated with center $\mathbf{u}_s = [\mathbf{u}_s^{(1)}, \mathbf{u}_s^{(2)}]$. Thus, the above formula (\ref{modelresue_Ob3}) can be rewritten as:
\begin{equation}
\label{modelresue_Ob2}
\begin{aligned}
\begin{split}
&\min _{\mathbf{U}_4 \in \mathcal{H}^k}(\sum_{s=1}^k \sum_{\mathbf{x}_i \in (\mathcal{C}_s\cap D_c)}\lVert\mathbf{x}_i^{(1)}-\mathbf{u}_s^{(1)} \rVert^2+\lVert\mathbf{x}_i^{(2)}-\mathbf{u}_s^{(2)} \rVert^2
\\  
&\quad  \quad \quad \quad \quad 
+ \theta\sum_{s=1}^k \lVert\mathbf{u}_s^{(1)} - \mathbf{u}_s^{0}\rVert^2),
\end{split}
\end{aligned}
\end{equation}
where $\mathbf{x}_i^{(1)}$ and $\mathbf{x}_i^{(2)}$ denote the old and new features of the current instances, respectively. Since the objective function (\ref{modelresue_Ob2}) is separable for each $\mathbf{u}_s$, the optimization yields $k$ independent quadratic sub-problems. Consequently, $\mathbf{u}_s^{(1)}$ and $\mathbf{u}_s^{(2)}$ can be updated separately via their respective closed-form solutions.

\textbf{Updating $\mathbf{u}_s^{(1)}(s=1, \ldots, k)$ with fixed $\mathbf{u}_s^{(2)}$}: For each cluster center $\mathbf{u}_s^{(1)}$, the objective function (\ref{modelresue_Ob2}) can be formulated as a sub-problem for each of the $k$ clusters,
\begin{equation*}
\label{modelresue_old centerupdate}
\begin{aligned}
\begin{split}
&\min _{\mathbf{U}_4^{(1)} \in \mathcal{H}^k}\sum_{\mathbf{x}_i\in (\mathcal{C}_s\cap D_c)}(\lVert\mathbf{x}_i^{(1)}-\mathbf{u}_s^{(1)} \rVert^2+\lVert\mathbf{x}_i^{(2)}-\mathbf{u}_s^{(2)} \rVert^2)\\
&\quad \quad \quad +\theta\lVert\mathbf{u}_s^{(1)} - \mathbf{u}_s^{0}\rVert^2.
\end{split}
\end{aligned}
\end{equation*}
Let the optimized function be $\mathcal{P}_{1}(\mathbf{u}_s^{(1)})$. We set the derivative of it
w.r.t. $\mathbf{u}_s^{(1)}$ to 0, and solve for it as follows:
\begin{equation*}
\begin{aligned}
\begin{split}
\frac{\partial \mathcal{P}_{1}(\mathbf{u}_s^{(1)})}{\partial \mathbf{u}_s^{(1)}}
&=\sum_{\mathbf{x}_i\in (\mathcal{C}_s\cap D_c)}-2\mathbf{x}_i^{(1)}+2\left|\mathcal{C}_s\right|\mathbf{u}_s^{(1)} +2 \theta\mathbf{u}_s^{(1)} - 2\theta\mathbf{u}_s^{0}\\
&=0,
\end{split}
\end{aligned}
\end{equation*}
where $\left|\mathcal{C}_s\right|$ denotes the number of instances in the set $\mathcal{C}_s$.
After simplification, we can obtain $\mathbf{u}_s^{(1)}= \frac{\sum_{\mathbf{x}_i \in (\mathcal{C}_s\cap D_c)}\mathbf{x}_i^{(1)}+ \theta\mathbf{u}_s^{0}}{\left|\mathcal{C}_s\right|+\theta}.$

\textbf{Updating $\mathbf{u}_s^{(2)}(s=1, \ldots, k)$ with fixed  $\mathbf{u}_s^{(1)}$}: 
Similarly, for each cluster center $\mathbf{u}_s^{(2)}$, we can  transform the objective function (\ref{modelresue_Ob2}) into a sub-problem for each of the $k$ clusters:
\begin{equation*}
\label{modelresue_new centerupdate}
\begin{aligned}
\begin{split}
&\min _{\mathbf{U}_4^{(2)} \in \mathcal{H}^k}\sum_{\mathbf{x}_i\in (\mathcal{C}_s\cap D_c)}\lVert\mathbf{x}_i^{(2)}-\mathbf{u}_s^{(2)} \rVert^2.
\end{split}
\end{aligned}
\end{equation*}
Let the optimized function be $\mathcal{P}_{2}(\mathbf{u}_s^{(2)})$.
We set the derivative of it w.r.t. $\mathbf{u}_s^{(2)}$ to 0, that is
\begin{equation*}
\begin{aligned}
\begin{split}
\frac{\partial \mathcal{P}_{2}(\mathbf{u}_s^{(2)})}{\partial \mathbf{u}_s^{(2)}}
=\sum_{\mathbf{x}_i \in (\mathcal{C}_s\cap D_c)}-2(\mathbf{x}_i^{(2)}-\mathbf{u}_s^{(2)} )=0.
\end{split}
\end{aligned}
\end{equation*}
Hence, we can derive $	\mathbf{u}_s^{(2)}=\frac{\sum_{\mathbf{x}_i \in (\mathcal{C}_s\cap D_c)}(\mathbf{x}_i^{(2)})}{\left|\mathcal{C}_s\right|}$.
In summary, although using $\mathbf{X}_2$ merely for training a clustering model during the current stage is a feasible approach, the available data in the current stage is limited, as $n_2$ is often smaller than $n_1$. Relying only on these instances for training may lead to reduced prediction accuracy. Thus, rather than learning the cluster center directly by only applying the data adaptation algorithm, the importance of reusing the cluster center $\mathbf{U}^0$ from a previous stage needs to be emphasized. 

\section{Generalization Error bound Analysis}
\label{Generalization Error bound Analyses Section}
Expanding the theoretical framework of static clustering, this section derives generalization error bounds for the proposed FIC algorithms in dynamic environments and discusses their quantitative implications.

According to the widespread application and established generalization properties of $k$-means algorithm \cite{antos2005improved, tang2016lloyd, zhang2023imbalanced}, we establish the groundwork by introducing fundamental definitions. We denote a family of $k$-\emph{valued} functions as
\begin{equation}
\label{G_index}
\mathcal{G}_{\mathbf{U}}=\left\{g_{\mathbf{U}}=(g_{\mathbf{u}_{1}}, \ldots, g_{\mathbf{u}_{k}}): \mathbf{U} \in \mathcal{H}^{k}\right\},
\end{equation}
where $g_{\mathbf{u}_{s}}(\mathbf{x})=\lVert\mathbf{x}-\mathbf{u}_{s} \rVert^2$ with $s = 1,\ldots k$.
Let  $\psi :\mathbb{R}^{k}\rightarrow \mathbb{R}$ be a function that computes the minimum value:
$$	\psi (\boldsymbol{\alpha})=\min(\alpha_{1},\ldots, \alpha_{k}), \text{for any} \ \boldsymbol{\alpha} =(\alpha_{1},\ldots, \alpha_{k}) \in \mathbb{R}^{k}. $$
Denote a ``minimum'' family of functions $\mathcal{G}_{\mathbf{U}}$ as $\mathcal{J}_{\mathbf{U}}$:
\begin{equation}
\label{J_U}
\mathcal{J}_{\mathbf{U}}:=\left\{h_{\mathbf{U}}=\psi \circ g_{\mathbf{U}} \mid g_{\mathbf{U}} \in \mathcal{G}_{\mathbf{U}}, h_{\mathbf{U}}(\mathbf{x})=\psi(g_{\mathbf{U}}(\mathbf{x}))\right\}.
\end{equation}
Obviously, $\psi(g_{\mathbf{U}}(\mathbf{x}))= \min(g_{\mathbf{u}_{1}}(\mathbf{x}), \ldots, g_{\mathbf{u}_{k}}(\mathbf{x}))$, we then rewrite the empirical and expected clustering risk as	
$$
\hat{\mathcal{L}}_{D}(\mathbf{U}):=\frac{1}{n}\sum_{i =1}^{n}\psi(g_{\mathbf{U}}(\mathbf{x}_i));
$$
$$
\mathcal{L}_{\mathbb{Q}}(\mathbf{U}):=\mathbb{E}_{\mathbf{x}\sim \mathbb{Q}}\left[\psi(g_{\mathbf{U}}(\mathbf{x})) \right].
$$
\begin{definition}[\textbf{Clustering Rademacher Complexity (CRC)}]
Given a family of functions $\mathcal{J}_{\mathbf{U}}$ denoted by (\ref{J_U}), let a sample set $D=\{\mathbf{x}_{1},\dots,\mathbf{x}_{n}\}$ be drawn from the input space $\mathcal{X}$. Define the clustering empirical Rademacher complexity of $\mathcal{J}_{\mathbf{U}}$ w.r.t. $D$ as
\begin{equation}
\label{R_valueF}
\mathfrak{R}_{n}(\mathcal{J}_{\mathbf{U}})=\mathbb{E}_{\boldsymbol{\sigma}}[\sup _{h_{\mathbf{U}} \in \mathcal{J}_{\mathbf{U}}}\lvert\frac{1}{n}\sum_{i=1}^{n} \sigma_{i} h_{\mathbf{U}}(\mathbf{x}_{i})\rvert],
\end{equation}
where $\{\sigma_i\}_{i=1}^n$ are independent Rademacher variables. The expected CRC is defined as $\mathfrak{R}(\mathcal{J}_{\mathbf{U}}) = \mathbb{E}[\mathfrak{R}_n(\mathcal{J}_{\mathbf{U}})]$.
\end{definition}

First, we propose a generalization error bound for $k$-means algorithm in Theorem\,\ref{k-means  generalization error bound}, serving as a foundational analysis for FIC. This novel generalization bound is derived using a new tool, previously unexplored in $k$-means studies.
\begin{theorem} 
\label{k-means  generalization error bound}
Consider training the $k$-means clustering model in (\ref{k-meansOb}) using any sample set $D=\left\{\mathbf{x}_{1}, \ldots, \mathbf{x}_{n}\right\}$ of $n$ data points. Let $\mathbf{U}\in \mathcal{H}^{k}$ be all possible cluster centers. If $\lVert\mathbf{x}\rVert\leq \gamma$ for $ \forall\mathbf{x}\in \mathcal{X}$,
then for any $\delta \in(0,1)$ and $\forall \mathbf{U}\in \mathcal{H}^{k}$, there exists a constant $V$ such that, with probability at least $1-\delta$, we have
\begin{equation*}
\label{final generalization error inequality}
\begin{aligned}
\mathcal{L}_{\mathbb{Q}}(\mathbf{U})
\leq &\hat{\mathcal{L}}_{D}(\mathbf{U}) + \frac{\alpha+(2\sqrt{2}+3\beta)\sqrt{\log (1 / \delta)}}{\sqrt{n}} +\frac{6\gamma\log (1 / \delta)}{n},
\end{aligned}
\end{equation*}
where {\small $\beta = \sqrt{(\mathbb{E}_{\mathbf{x}\sim \mathbb{Q}} \left[\sup _{g_{\mathbf{U}}\in\mathcal{G}_{\mathbf{U}}}\hat{\mathcal{L}}_{D}(\mathbf{U}) \right]+4\alpha+8\sqrt{2\log (1 / \delta)})\gamma}$}
and $\alpha = 8\gamma^2V \sqrt {k}\log ^{2}(2\sqrt{2\gamma n}).$\\
\end{theorem}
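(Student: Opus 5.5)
The bound has a Bernstein/Talagrand-type shape: a $\sqrt{\log(1/\delta)/n}$ term whose coefficient $\beta$ involves a variance proxy, plus a $\log(1/\delta)/n$ term. I will therefore prove it in three steps. First, a concentration inequality for the supremum of the empirical process over $\mathcal{J}_{\mathbf{U}}$, of Bousquet/Talagrand type. Second, a symmetrization bound for the expected supremum. Third, a vector-contraction argument that reduces the clustering Rademacher complexity of $\mathcal{J}_{\mathbf{U}}$ to a Dudley-type entropy integral, which yields $\alpha$.

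\textbf{Step 1 (reduction to bounded centers and concentration).} Since $\lVert\mathbf{x}\rVert\le\gamma$, any center can be projected onto the ball of radius $\gamma$ without increasing $\min_s\lVert\mathbf{x}-\mathbf{u}_s\rVert^2$ for $\mathbf{x}\in\mathcal{X}$. So it suffices to take $\mathbf{U}$ in that ball, after which $0\le h_{\mathbf{U}}\le 4\gamma^2$. Set
\[
Z=\sup_{h_{\mathbf{U}}\in\mathcal{J}_{\mathbf{U}}}\bigl(\mathcal{L}_{\mathbb{Q}}(\mathbf{U})-\hat{\mathcal{L}}_D(\mathbf{U})\bigr).
\]
Bousquet's version of Talagrand's inequality gives, with probability at least $1-\delta$,
\[
Z\le \mathbb{E}Z+\sqrt{\tfrac{2\log(1/\delta)(\sigma^2+2b\,\mathbb{E}Z)}{n}}+\tfrac{b\log(1/\delta)}{3n},
\]
where $b$ bounds the range and $\sigma^2$ bounds the variance. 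I will bound the variance self-boundingly, using $\mathrm{Var}(h)\le \mathbb{E}h^2\le(\text{range})\cdot\mathbb{E}h$; this is what puts the risk term $\mathbb{E}[\sup\hat{\mathcal{L}}_D]$ inside $\beta$, multiplied by a $\gamma$ factor. Finally, $\sqrt{a+b}\le\sqrt a+\sqrt b$ splits the terms, and the constants $2\sqrt2$, $3\beta$ and $6\gamma$ come out of this bookkeeping.

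\textbf{Step 2 (symmetrization).} Standard symmetrization gives $\mathbb{E}Z\le 2\mathfrak{R}(\mathcal{J}_{\mathbf{U}})$. Inside $\beta$ I will also replace $\mathbb{E}Z$ by the Rademacher bound, which accounts for the $4\alpha$ term. To account for the $8\sqrt{2\log(1/\delta)}$ inside $\beta$, I will convert the expected risk to the expected supremum of the empirical risk using a bounded-differences (McDiarmid) step.

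\textbf{Step 3 (bounding $\mathfrak{R}(\mathcal{J}_{\mathbf{U}})$ by $\alpha/\sqrt n$).} This is the ``new tool'' and the main obstacle. The map $\psi=\min$ is $1$-Lipschitz with respect to $\ell_\infty$ on $\mathbb{R}^k$. I will use the $\ell_\infty$ vector-contraction / covering result of Foster--Rakhlin type: the Rademacher complexity of $\psi\circ\mathcal{G}_{\mathbf{U}}$ is at most
\[
C\sqrt{k}\,\max_s\mathfrak{R}^{\mathrm{worst}}_{n}(\mathcal{G}_s)\,\log^{2}\bigl(\text{range}\cdot n\bigr)
\]
up to a fat-shattering argument. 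Here $\mathcal{G}_s=\{\mathbf{x}\mapsto\lVert\mathbf{x}-\mathbf{u}\rVert^2\}$ is the class for a single center. Expanding $\lVert\mathbf{x}-\mathbf{u}\rVert^2=\lVert\mathbf{x}\rVert^2-2\langle\mathbf{x},\mathbf{u}\rangle+\lVert\mathbf{u}\rVert^2$, the only data-dependent varying part is linear, and its Rademacher complexity is at most $2\gamma^2/\sqrt n$. This gives a bound of order $\gamma^2 V\sqrt{k}\log^2(\cdot)/\sqrt n$, where $V$ absorbs the universal constant of the contraction theorem. The delicate points are matching the logarithm's argument to $2\sqrt{2\gamma n}$, which requires choosing the scale in the Dudley integral correctly, and verifying that worst-case rather than average complexity of the coordinate classes suffices. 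I expect this step to need the most care.

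Substituting Steps 2 and 3 into Step 1 and simplifying gives the stated inequality uniformly over $\mathbf{U}\in\mathcal{H}^k$.
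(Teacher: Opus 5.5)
Your architecture is the paper's. It uses Bousquet's inequality with the variance bounded by range times mean, which is exactly how $\lambda=\mathbb E[\sup_{\mathbf U}\hat{\mathcal L}_D(\mathbf U)]$ enters $\beta$ (via $\sup_{\mathbf U}\mathcal L_{\mathbb Q}(\mathbf U)\le\lambda$). It uses symmetrization for $\mathbb E Z\le 2\mathfrak R(\mathcal J_{\mathbf U})$. It uses the Foster--Rakhlin $\ell_\infty$ contraction with $L=1$, $r=1/2$, bounding the coordinate classes by expanding the square.

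\textbf{The step that fails} is your claim that the stated constants ``come out of the bookkeeping.'' With your (correct) range $0\le h_{\mathbf U}\le 4\gamma^2$, Bousquet yields
\[
\mathcal L_{\mathbb Q}(\mathbf U)-\hat{\mathcal L}_D(\mathbf U)\le 2\mathfrak R(\mathcal J_{\mathbf U})+2\sqrt{2}\,\gamma\sqrt{\frac{(\lambda+4\mathfrak R(\mathcal J_{\mathbf U}))\log(1/\delta)}{n}}+\frac{4\gamma^2\log(1/\delta)}{3n}.
\]
This has $\gamma^2$ where the statement has $\gamma$, both in the last term and under the root defining $\beta$. It matches the stated form only when $\gamma\le 1$. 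In general the statement is not even scale-consistent: under $\mathbf x\mapsto c\,\mathbf x$ the left side scales like $c^2$, while $6\gamma\log(1/\delta)/n$ scales like $c$ and $\beta$ like $c^{3/2}$.

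The paper gets the $\gamma$-form only from $\lVert\mathbf x-\mathbf u_s\rVert^2\le 2(\lVert\mathbf x\rVert+\lVert\mathbf u_s\rVert)\le 4\gamma$. It uses this to set $b=4\gamma$, to normalize by $8\gamma$, and to get $\mathbb E[h_{\mathbf U}^2]\le 4\gamma\,\mathbb E[h_{\mathbf U}]$. That inequality is false for $\gamma>1$. Carried out correctly, your route (or the paper's) proves the version with $\gamma$ replaced by $\gamma^2$ in those two places. With the paper's constants that means $6\gamma^2\log(1/\delta)/n$ and $\beta=\gamma\sqrt{\lambda+4\alpha+8\sqrt{2\log(1/\delta)}}$.

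Relatedly, your projection argument shows the \emph{minimizer} may be taken in the ball. It does not reduce a deviation bound uniform over all of $\mathcal H^k$ to the ball: projection lowers both risks, not their difference, and for unbounded centers both the deviation and $\lambda$ are infinite. So $\lVert\mathbf u_s\rVert\le\gamma$ has to be a standing restriction on the class, as the paper tacitly assumes.

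The remaining discrepancies are misattributions.
\begin{itemize}
\item The standalone $2\sqrt2\sqrt{\log(1/\delta)}/\sqrt n$ term and the $8\sqrt{2\log(1/\delta)}$ inside $\beta$ come from neither Bousquet nor McDiarmid. The paper plugs $\mathfrak R(\mathcal J_{\mathbf U})\le\mathfrak R_n(\mathcal J_{\mathbf U})+\sqrt{2\log(1/\delta)/n}$ (Oneto et al.), together with $\mathfrak R_n\le\alpha/(2\sqrt n)$, into the $2\mathfrak R$ and $8\mathfrak R$ of its concentration lemma. It then drops a factor $1/\sqrt n$ inside $\beta$, which is also why $4\alpha$ appears there without it.
\item Your McDiarmid step has nothing to do, since $\sup_{\mathbf U}\mathcal L_{\mathbb Q}(\mathbf U)\le\mathbb E\sup_{\mathbf U}\hat{\mathcal L}_D(\mathbf U)$ is deterministic. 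Because the contraction bound on $\mathfrak R_n$ holds for every sample, it passes directly to $\mathfrak R=\mathbb E\,\mathfrak R_n$. Those $\delta$-terms are therefore pure slack you may simply add.
\item For $\alpha$ there is no Dudley scale to choose, because Foster--Rakhlin is used as a black box. Its logarithm is $\log^2\bigl(bn/\max_s\widetilde{\mathfrak R}_n(\mathcal G_s)\bigr)$, with the complexity in the denominator, so an upper bound on the coordinate complexity does not control it by itself. You need either a lower bound on $\max_s\widetilde{\mathfrak R}_n$ (which the paper attempts via a Khintchine-type inequality) or, more cleanly, monotonicity of $x\mapsto x\log^2(C/x)$ on $x\le C/e^2$.
\item Either way the argument of the logarithm carries a factor of order $n^{3/2}$. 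The paper gets $\log^2(2n\sqrt{2\gamma n})$, as in its Rademacher lemma and in the FT/DR/DA theorems. The $\log^2(2\sqrt{2\gamma n})$ in the statement appears to have lost a factor $n$.
\item Finally, the $\lVert\mathbf x_j\rVert^2$ term (because the CRC carries an absolute value) and the $\mathbf u$-dependent term $\lVert\mathbf u\rVert^2\sum_j\sigma_j$ both contribute. This gives $4\gamma^2/\sqrt n$ rather than your $2\gamma^2/\sqrt n$, which only changes $V$.
\end{itemize}
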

 \noindent\textbf{See the proof of Theorem\,\ref{k-means  generalization error bound}} in Section \ref{k-means  generalization error bound proof}.
\begin{remark}
{\rm To enhance the presentation of the bound, we mainly focus on the leading terms, resulting in
$\mathcal{L}_{\mathbb{Q}}(\mathbf{U})
\leq \hat{\mathcal{L}}_{D}(\mathbf{U}) +\mathcal{O}(\sqrt{\frac{k}{n}}+\frac{1}{n}).$
Theorem\,\ref{k-means  generalization error bound} demonstrates that the generalization error of $k$-means can be bounded at an approximate order of $\tilde{\mathcal{O}}(\sqrt{\frac{k}{n}}+\frac{1}{n})$. Later, we provide the generalization bounds for the four proposed algorithms. }
\end{remark}

\textbf{For FIC-FT}, we suppose that the instances in the previous stage obey an unknown distribution $\mathbb{Q}_{p}$. Let $\mathbf{U}_1\in \mathcal{H}^{k}$ be all possible cluster centers, and denote $\mathcal{J}_{\mathbf{U}_1}$  as a ``minimum"  family. Then, a generalization bound of FIC-FT can be derived in Theorem\,\ref{Feature Tailoring theorem} based on the clustering model (\ref{Feature Tailoring_Ob}).
\begin{theorem}	[\textbf{FIC-FT}] 
\label{Feature Tailoring theorem}
Consider training the clustering model in (\ref{Feature Tailoring_Ob}) using the sample set $D_p \bigcup D_{c^p}$, which comprises $n_1+n_2$ instances. If $\lVert \mathbf{x}\rVert\leq \gamma$ for $ \forall\mathbf{x}\in \mathcal{X}_c$, then for any $\delta \in(0,1)$ and $\forall \mathbf{U}_1\in \mathcal{H}^{k}$, there exists a constant $V_1$ such that, with probability at least $1-\delta$, we obtain
\begin{equation*}
\begin{aligned}
\mathcal{L}_{\mathbb{Q}_{p}}(\mathbf{U}_{1})\leq &\hat{\mathcal{L}}_{D_p \bigcup D_{c^p}}(\mathbf{U}_{1}) 
+ \frac{\alpha_1+(2\sqrt{2}+3\beta_1)\sqrt{\log (1 / \delta)}}{\sqrt{n_1+n_2}}
\\&
+\frac{6\gamma\log (1 / \delta)}{n_1+n_2},
\end{aligned}
\end{equation*}
where $\alpha_1 = 8\gamma^2V_1 \sqrt {k}\log ^{2}(2(n_1+n_2)\sqrt{2\gamma (n_1+n_2)})$ and\\ 
{\small $\beta_1 = \sqrt{(\mathbb{E}_{\mathbf{x}\sim \mathbb{Q}_p} [\sup _{g_{\mathbf{U}_1}\in\mathcal{G}_{\mathbf{U}_1}}\hat{\mathcal{L}}_{D}(\mathbf{U}_1) ]+4\alpha_1+8\sqrt{2\log (1 / \delta)})\gamma}.$ }
\end{theorem}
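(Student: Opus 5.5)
The plan is to reduce Theorem~\ref{Feature Tailoring theorem} to a direct application of Theorem~\ref{k-means  generalization error bound}. Concretely, we apply it on the old feature space $\mathcal{X}_p\subset\mathbb{R}^{d_1}$ to the pooled sample $D_p\bigcup D_{c^p}$ of size $n=n_1+n_2$, with the hypothesis class $\mathcal{J}_{\mathbf{U}_1}$.

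\textbf{Step 1: verify the hypotheses.} We first check that the pooled set is an i.i.d.\ sample from $\mathbb{Q}_p$. For $D_p$ this is the standing assumption. For $D_{c^p}$, the old-feature block of a current-stage instance must have the same marginal as previous-stage instances. This is exactly the implicit modelling assumption of FIC-FT: the old sensors measure the same quantities in both stages, and the marginal of $\mathbb{Q}_c$ on the first $d_1$ coordinates equals $\mathbb{Q}_p$. We state this explicitly. Next, the boundedness hypothesis transfers. For $\mathbf{x}=[\mathbf{x}^{(1)},\mathbf{x}^{(2)}]\in\mathcal{X}_c$ we have $\lVert\mathbf{x}^{(1)}\rVert\le\lVert\mathbf{x}\rVert\le\gamma$, since the projection onto the old coordinates is a contraction. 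Hence every point of $D_p\bigcup D_{c^p}$ lies in the ball of radius $\gamma$ in $\mathbb{R}^{d_1}$, and the centers $\mathbf{U}_1$ range over $\mathcal{H}^k$ with $\mathcal{H}=\mathbb{R}^{d_1}$.

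\textbf{Step 2: invoke the base bound.} With $n=n_1+n_2$ and $\mathbb{Q}=\mathbb{Q}_p$, Theorem~\ref{k-means  generalization error bound} gives, with probability at least $1-\delta$ and uniformly over $\mathbf{U}_1$, the displayed inequality. The constant $V$ becomes $V_1$ and $\beta$ becomes $\beta_1$. The only remaining point is the logarithmic factor in $\alpha_1$. It reads $\log^2(2(n_1+n_2)\sqrt{2\gamma(n_1+n_2)})$, whereas the base $\alpha$ has $\log^2(2\sqrt{2\gamma n})$. The extra factor $n$ inside the logarithm only enlarges $\alpha_1$. This in turn enlarges $\beta_1$, which is monotone in $\alpha_1$. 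So the stated bound follows from the base one, and it also matches the form obtained when the covering-number step in the base proof is carried out with the cruder grid. We will remark that either version is valid and the stated one is weaker.

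\textbf{Main obstacle.} The analytic work is already contained in Theorem~\ref{k-means  generalization error bound}, so the substantive step is the distributional identification in Step 1. The theorem is stated under $\mathbb{Q}_p$, so the proof must justify treating $D_{c^p}$ as drawn from $\mathbb{Q}_p$. Without that assumption, one would have to split the empirical risk as a weighted average of the two samples. Each part would then be bounded separately, one with $n_1$ and one with $n_2$, followed by a union bound and a discrepancy term between the two marginals. Since the statement contains no such term, we adopt the identical-marginal assumption and keep the proof to the reduction above.
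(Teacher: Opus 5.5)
Your proposal is correct and is essentially the paper's argument: the paper gives no separate proof and obtains the bound by applying Theorem~\ref{k-means  generalization error bound} to the pooled old-feature sample with $n=n_1+n_2$ and $\mathbb{Q}=\mathbb{Q}_p$, leaving implicit both the identical-marginal assumption and the projection bound $\lVert\mathbf{x}^{(1)}\rVert\le\lVert\mathbf{x}\rVert\le\gamma$, which you state explicitly. One correction on the logarithm: the factor $n_1+n_2$ inside $\alpha_1$ is not a ``cruder grid'' variant (there is no covering step; it is the $bn$ in the vector-contraction bound of Lemma~\ref{Rademacher complexity_property0}) but exactly the $\log^{2}(2n\sqrt{2\gamma n})$ of Lemma~\ref{Rademacher complexity0}, which the displayed $\alpha$ of Theorem~\ref{k-means  generalization error bound} dropped, so you should read it off from there rather than argue by monotonicity of $\log^2$, which fails for sufficiently small $\gamma$.
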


\begin{remark}
{\rm The generalization error bound in Theorem\,\ref{Feature Tailoring theorem} is established by Theorem\,\ref{k-means  generalization error bound}. In FIC-FT, the training data $D_p \cup D_{c^p}$ consists exclusively of old features. Thus, the generalization bound is primarily determined by their clustering performance. Furthermore, if the representational capacity of these features is constrained, clustering improvement remains limited even with access to data from both stages.}
\end{remark}

\textbf{For FIC-DR}, the reconstructed data $\tilde{D}_p$ from the previous stage follows a distribution $\tilde{\mathbb{Q}}_{p}$. The current data $D_c$ is drawn from distribution $\mathbb{Q}_c$. Pioneering studies have proposed various measures to quantify distributional discrepancies \cite{kifer2004detecting,ben2010theory,mohri2012new,zhang2020learning}. Building upon the $\mathcal{Y}$-discrepancy \cite{mohri2012new}, we define a version tailored for FIC-DR.
\begin{definition}[\textbf{$\mathcal{Y}$-discrepancy}]
Let $\tilde{\mathbb{Q}}_p, \mathbb{Q}_{c}$ denote two given distributions. 
Given cluster centers $\mathbf{U} \in \mathcal{H}^{k}$, the $\mathcal{Y}$-discrepancy between $\tilde{\mathbb{Q}}_p$ and $\mathbb{Q}_c $ is defined as:
\begin{equation}
\operatorname{disc}_\mathcal{Y}(\tilde{\mathbb{Q}}_p, \mathbb{Q}_c)=\sup _{\mathbf{U} \in \mathcal{H}^{k}}\lvert	\mathcal{L}_{\tilde{\mathbb{Q}}_p}(\mathbf{U})-\mathcal{L}_{\mathbb{Q}_{c}}(\mathbf{U})\rvert.
\end{equation}
\end{definition}
Since only empirical data are available, we introduce weights $\boldsymbol{\omega}$ to the data $\tilde{D}_p$ sampled from distribution $\tilde{\mathbb{Q}}_p$. The resulting weighted empirical $\mathcal{Y}$-discrepancy is defined as:
\begin{equation}
\label{disc definition}
\operatorname{disc}_\mathcal{Y}(\tilde{D}_{p_{\boldsymbol{\omega}}}, D_c)=\sup _{\mathbf{U} \in \mathcal{H}^{k}}\lvert\hat{\mathcal{L}}_{\tilde{D}_{p_{\boldsymbol{\omega}}}}(\mathbf{U})-\hat{\mathcal{L}}_{D_c}(\mathbf{U})\rvert.
\end{equation}

By incorporating the weighted empirical $\mathcal{Y}$-discrepancy, Theorem\,\ref{Data Reconstruction generalization error} establish a generalization error bound for FIC-DR on distribution $\mathbb{Q}_{c}$. 
\begin{theorem}	[\textbf{FIC-DR}]
\label{Data Reconstruction generalization error}
Consider training the clustering model in (\ref{Data Reconstruction_Ob}) with the sample set $\tilde{D}_p \bigcup D_{c}$ consisting of $n_1+n_2$ instances. If $\lVert\mathbf{x}\rVert\leq \gamma$ for $ \forall\mathbf{x}\in \mathcal{X}_c$, then for any $\delta \in(0,1)$ and $\forall \mathbf{U}_2\in \mathcal{H}^{k}$, there exists a constant $V_2$ such that, with probability at least $1-\delta$, we have
\begin{equation*}
\label{Data Reconstruction g_error inequility}
\begin{aligned}
\mathcal{L}_{\mathbb{Q}_c}(\mathbf{U}_{2}) \leq &	 \hat{\mathcal{L}}_{\tilde{D}_{p_{\boldsymbol{\omega}}}}(\mathbf{U}_{2})+ \operatorname{disc}_\mathcal{Y}(\tilde{D}_{p_{\boldsymbol{\omega}}}, D_c) +\frac{6\gamma\log (1 / \delta)}{n_1+n_2}
\\&	
+ \frac{\alpha_2+(2\sqrt{2}+3\beta_2)\sqrt{\log (1 / \delta)}}{\sqrt{n_1+n_2}},
\end{aligned}
\end{equation*}
where $\alpha_2 = 8\gamma^2V_2\sqrt {k}\log ^{2}(2(n_1+n_2)\sqrt{2\gamma (n_1+n_2)})$ 	 and\\
{\small $\beta_2 = \sqrt{(\mathbb{E}_{\mathbf{x}\sim \mathbb{Q}_c} [\sup _{g_{\mathbf{U}_2}\in\mathcal{G}_{\mathbf{U}_2}}\hat{\mathcal{L}}_{D}(\mathbf{U}_2) ]+4\alpha_2+8\sqrt{2\log (1 / \delta)})\gamma}.$ }
\end{theorem}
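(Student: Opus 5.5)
The plan is a triangle-inequality decomposition: first pass from the target risk on $\mathbb{Q}_c$ to a risk on the combined training data, then control that term with Theorem\,\ref{k-means  generalization error bound} applied to the $n_1+n_2$ training points. For any fixed $\mathbf{U}_2\in\mathcal{H}^k$ we write
\begin{equation*}
\mathcal{L}_{\mathbb{Q}_c}(\mathbf{U}_2)=\bigl(\mathcal{L}_{\mathbb{Q}_c}(\mathbf{U}_2)-\hat{\mathcal{L}}_{D_c}(\mathbf{U}_2)\bigr)+\bigl(\hat{\mathcal{L}}_{D_c}(\mathbf{U}_2)-\hat{\mathcal{L}}_{\tilde{D}_{p_{\boldsymbol{\omega}}}}(\mathbf{U}_2)\bigr)+\hat{\mathcal{L}}_{\tilde{D}_{p_{\boldsymbol{\omega}}}}(\mathbf{U}_2).
\end{equation*}
The middle bracket is at most $\operatorname{disc}_\mathcal{Y}(\tilde{D}_{p_{\boldsymbol{\omega}}}, D_c)$ by the definition in (\ref{disc definition}), since that quantity is a supremum over all $\mathbf{U}\in\mathcal{H}^k$. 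This step is purely deterministic.

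For the first bracket we need a uniform deviation bound of order $(n_1+n_2)^{-1/2}$ rather than $n_2^{-1/2}$. We therefore view the training sample $\tilde{D}_p\cup D_c$ as $n_1+n_2$ points in $\mathcal{X}_c$ with $\lVert\mathbf{x}\rVert\le\gamma$. We then rerun the argument of Theorem\,\ref{k-means  generalization error bound} (as was done for Theorem\,\ref{Feature Tailoring theorem}) on the function class $\mathcal{J}_{\mathbf{U}_2}$. Concretely, we use Talagrand/Bousquet's concentration for the supremum of the empirical process, which produces the $\sqrt{\log(1/\delta)}$ term with variance proxy $\beta_2$ and the $6\gamma\log(1/\delta)/(n_1+n_2)$ term. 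We bound the expected supremum by the CRC $\mathfrak{R}(\mathcal{J}_{\mathbf{U}_2})$. We then bound the CRC via a vector-contraction / covering-number (Dudley entropy integral) argument for the $\min$ of $k$ squared-distance functions, with constant $V_2$. This gives $\alpha_2/\sqrt{n_1+n_2}$, with the $\log^2(\cdot)$ factor coming from the entropy integral at scale $1/(n_1+n_2)$. Matching the constants exactly as in Theorem\,\ref{Feature Tailoring theorem} only requires substituting $n=n_1+n_2$ and the distribution $\mathbb{Q}_c$ in $\beta_2$.

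The obstacle is that $\tilde{D}_p$ is drawn from $\tilde{\mathbb{Q}}_p$, not $\mathbb{Q}_c$, so the pooled sample is not i.i.d.\ from $\mathbb{Q}_c$. Moreover, the reconstruction depends on $D_c$. First, I would handle this by treating the pooled sample as i.i.d.\ from the mixture and absorbing the mismatch into the discrepancy term. In that reading, the concentration step is stated for the weighted empirical measure: the weights $\boldsymbol{\omega}$ are chosen so that the weighted empirical process over $\tilde{D}_p\cup D_c$ has the same sup-deviation structure. The deviation $\mathcal{L}_{\mathbb{Q}_c}-\hat{\mathcal{L}}_{D_c}$ is then bounded through the pooled process of size $n_1+n_2$, with the residual difference charged to $\operatorname{disc}_\mathcal{Y}$. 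If independence fails because of the reconstruction, I would condition on $D_c$ and treat the reconstruction map as fixed, applying the bound conditionally. Adding the three pieces then gives the stated inequality with probability at least $1-\delta$.
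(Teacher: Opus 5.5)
Your skeleton is exactly the paper's. Its proof first asserts $\mathcal{L}_{\mathbb{Q}_c}(\mathbf{U}_2)\le\hat{\mathcal{L}}_{D_c}(\mathbf{U}_2)+A$, where $A$ is the slack of Theorem~\ref{k-means  generalization error bound} evaluated at $n=n_1+n_2$. It then adds and subtracts $\hat{\mathcal{L}}_{\tilde{D}_{p_{\boldsymbol{\omega}}}}(\mathbf{U}_2)$, as in your middle bracket, and that discrepancy step is fine. The genuine gap is your first bracket. The paper gives no remedy there either; it simply invokes Theorem~\ref{k-means  generalization error bound} with $n_1+n_2$ samples.

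The quantity $\mathcal{L}_{\mathbb{Q}_c}(\mathbf{U}_2)-\hat{\mathcal{L}}_{D_c}(\mathbf{U}_2)$ depends only on the $n_2$ points of $D_c$. Take a fixed $\mathbf{U}$ whose loss is not a.s.\ constant under $\mathbb{Q}_c$. Then this quantity exceeds some $\eta>0$ with a probability $p_\eta>0$ that does not depend on $n_1$. The slack $A$, by contrast, tends to $0$ as $n_1\to\infty$ with $n_2$ fixed. So no argument can bound this bracket at rate $(n_1+n_2)^{-1/2}$. For $\delta<p_\eta$ and $n_1$ large, the intermediate inequality is simply false.

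Your patch does not get around this, for three reasons.
\begin{itemize}
\item Rerunning the argument on $\tilde{D}_p\cup D_c$ controls a different quantity, namely $\mathcal{L}_{\mathbb{M}}(\mathbf{U})-\hat{\mathcal{L}}_{\tilde{D}_p\cup D_c}(\mathbf{U})$ with $\mathbb{M}=\frac{n_1}{n_1+n_2}\tilde{\mathbb{Q}}_p+\frac{n_2}{n_1+n_2}\mathbb{Q}_c$. It does so only if the pooled points are independent.
\item Converting $\mathcal{L}_{\mathbb{M}}$ into $\mathcal{L}_{\mathbb{Q}_c}$ costs $\frac{n_1}{n_1+n_2}\operatorname{disc}_{\mathcal{Y}}(\tilde{\mathbb{Q}}_p,\mathbb{Q}_c)$, which is a \emph{population} discrepancy. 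Trading it for the empirical $\operatorname{disc}_{\mathcal{Y}}(\tilde{D}_{p_{\boldsymbol{\omega}}},D_c)$ reintroduces $\sup_{\mathbf{U}}\lvert\mathcal{L}_{\mathbb{Q}_c}(\mathbf{U})-\hat{\mathcal{L}}_{D_c}(\mathbf{U})\rvert$, i.e.\ the $n_2^{-1/2}$ term. That discrepancy is in any case already spent on your middle bracket.
\item Conditioning on $D_c$ is worse still: it freezes $\hat{\mathcal{L}}_{D_c}$ and leaves nothing to concentrate.
\end{itemize}
If $\boldsymbol{\omega}$ may depend on $D_c$ and $n_1\gg n_2$, one can make $\tilde{D}_{p_{\boldsymbol{\omega}}}$ mimic the empirical law of $D_c$. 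The discrepancy is then nearly $0$, and the displayed bound itself fails for $\delta<p_\eta$.

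What your decomposition does prove rigorously is the same inequality with $n_2$ in place of $n_1+n_2$ throughout, i.e.\ with the $\alpha_3,\beta_3$ of Theorem~\ref{Data Adaptation  theorem}. Apply Theorem~\ref{k-means  generalization error bound} to $D_c$ alone, then take the deterministic discrepancy step. This needs no independence between the reconstruction and $D_c$, because the bound is uniform in $\mathbf{U}$ and the discrepancy step holds for every $\tilde{D}_p$ and $\boldsymbol{\omega}$. A genuine pooled rate needs three things: the pooled empirical risk plus the population term $\frac{n_1}{n_1+n_2}\operatorname{disc}_{\mathcal{Y}}(\tilde{\mathbb{Q}}_p,\mathbb{Q}_c)$; $\tilde{D}_p$ independent of $D_c$; and a concentration inequality for independent, non-identically distributed samples.
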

 \noindent\textbf{See the proof of Theorem\,\ref{Data Reconstruction generalization error} in Section \ref{Data Reconstruction generalization error proof}}.

\begin{remark}{\rm This bound is related to the clustering error on the reconstructed distribution $\tilde{D}_{p_{\boldsymbol{\omega}}}$ and the weighted empirical $\mathcal{Y}$-discrepancy between $\tilde{D}_{p_{\boldsymbol{\omega}}}$ and $D_c$. The weights $\boldsymbol{\omega}$ capture distribution shifts, with $\boldsymbol{\omega} = \mathbf{1}$ representing the case of an  invariant distribution. This theoretical framework suggests that generalization can be optimized by refining the reconstruction function upon which $\tilde{D}_{p_{\boldsymbol{\omega}}}$ depends. Consequently, an effective reconstruction process minimizes the discrepancy between the reconstructed and observed distributions, thereby directly enhancing the generalization capacity of FIC-DR.}
\end{remark}

\textbf{For FIC-DA,} recall that the set $D_c$ consists of $n_2$ instances generated from the distribution $\mathbb{Q}_c$ in the current stage. Note that the test data follows the distribution $\mathbb{Q}_c$. 
By training only with the data $D_c$, we can derive the following generalization error bound of FIC-DA as stated in Theorem\,\ref{Data Adaptation  theorem}.
\begin{theorem}	[\textbf{FIC-DA}]
\label{Data Adaptation  theorem}
Consider training the clustering model in (\ref{Data Adaptation_Ob}) with a sample set $D_c $ consisting of  $n_2$ instances. If $\lVert\mathbf{x}\rVert\leq \gamma$ for $ \forall\mathbf{x}\in \mathcal{X}_c$, then for any $\delta \in(0,1)$ and $\forall \mathbf{U}_3\in \mathcal{H}^{k}$, there exists a constant $V_3$ such that, with probability at least $1-\delta$, we have
\begin{equation}
\begin{aligned}
\mathcal{L}_{\mathbb{Q}_{c}}(\mathbf{U}_{3})\leq &\hat{\mathcal{L}}_{D_c}(\mathbf{U}_{3}) 
+ \frac{\alpha_3+(2\sqrt{2}+3\beta_3)\sqrt{\log (1 / \delta)}}{\sqrt{n_2}}
\\&	+\frac{6\gamma\log (1 / \delta)}{n_2},
\end{aligned}
\end{equation}
where we denote $\alpha_3 = 8\gamma^2V_3 \sqrt {k}\log ^{2}(2n_2\sqrt{2\gamma n_2})$ and
$\beta_3 = \sqrt{(\mathbb{E}_{\mathbf{x}\sim \mathbb{Q}_c} [\sup _{g_{\mathbf{U}_3}\in\mathcal{G}_{\mathbf{U}_3}}\hat{\mathcal{L}}_{D}(\mathbf{U}_3) ]+4\alpha_3+8\sqrt{2\log (1 / \delta)})\gamma}.$ 
\end{theorem}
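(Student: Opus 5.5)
The plan is to obtain Theorem~\ref{Data Adaptation  theorem} as a direct specialization of Theorem~\ref{k-means  generalization error bound}. FIC-DA trains the standard $k$-means objective (\ref{Data Adaptation_Ob}) on the sample $D_c=\{\mathbf{x}_1,\dots,\mathbf{x}_{n_2}\}$, drawn i.i.d.\ from $\mathbb{Q}_c$ over $\mathcal{X}_c\subset\mathbb{R}^{d_1+d_2}$. Unlike FIC-DR, there is no distribution mismatch to handle, because the test distribution is also $\mathbb{Q}_c$. The argument therefore consists of checking the hypotheses of Theorem~\ref{k-means  generalization error bound} with the substitutions $\mathbb{Q}\to\mathbb{Q}_c$, $D\to D_c$, $n\to n_2$, $\mathbf{U}\to\mathbf{U}_3$, and $\mathcal{X}\to\mathcal{X}_c$.

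The steps, in order, are as follows.
\begin{enumerate}
\item Record that the points of $D_c$ are i.i.d.\ from $\mathbb{Q}_c$. Record also that the bound $\lVert\mathbf{x}\rVert\le\gamma$ holds on $\mathcal{X}_c$, which is exactly the boundedness hypothesis of Theorem~\ref{k-means  generalization error bound}.
\item Identify the function class. Take $\mathcal{G}_{\mathbf{U}_3}$ and $\mathcal{J}_{\mathbf{U}_3}$ as in (\ref{G_index}) and (\ref{J_U}), with centers in $\mathcal{H}^k$ and $\mathcal{H}=\mathbb{R}^{d_1+d_2}$. The empirical risk in (\ref{Data Adaptation_Ob}) is then precisely $\hat{\mathcal{L}}_{D_c}(\mathbf{U}_3)=\frac{1}{n_2}\sum_i\psi(g_{\mathbf{U}_3}(\mathbf{x}_i))$, and the expected risk is $\mathcal{L}_{\mathbb{Q}_c}(\mathbf{U}_3)$.
\item Invoke Theorem~\ref{k-means  generalization error bound}. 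This yields, with probability at least $1-\delta$ and uniformly in $\mathbf{U}_3$, the stated inequality with $\sqrt{n_2}$ and $n_2$ in the denominators. The constants become $\alpha_3$ and $\beta_3$, with $V_3$ the constant provided by that theorem; $V_3$ comes from the covering/fat-shattering argument for the class and depends only on that class.
\end{enumerate}

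The only point that needs care, and what I expect to be the main obstacle, is matching the logarithmic factor. The statement writes $\log^2(2n_2\sqrt{2\gamma n_2})$, whereas Theorem~\ref{k-means  generalization error bound} writes $\log^2(2\sqrt{2\gamma n})$. Since $2n_2\sqrt{2\gamma n_2}\ge 2\sqrt{2\gamma n_2}$ for $n_2\ge1$, and $\log^2$ is increasing once its argument exceeds $1$, the stated $\alpha_3$ is at least the $\alpha$ given by Theorem~\ref{k-means  generalization error bound}. Moreover, $\beta_3$ is monotone in $\alpha_3$. Hence the bound with the larger constants follows a fortiori.

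If the argument of the logarithm is instead meant to come from the chaining step inside the proof of Theorem~\ref{k-means  generalization error bound}, where the covering radius is chosen of order $1/n$, I would redo that single step. The plan there is to bound the CRC $\mathfrak{R}_{n_2}(\mathcal{J}_{\mathbf{U}_3})$ through a vector-contraction (Lipschitz-of-min) argument, $\psi$ being $1$-Lipschitz in $\ell_\infty$. This reduces the problem to the $k$ coordinate classes $\{\lVert\mathbf{x}-\mathbf{u}\rVert^2\}$, which are bounded by $4\gamma^2$. The resulting $\sqrt{k}\log^2(\cdot)$ factor produces $\alpha_3$. The remaining Bernstein/Talagrand-type concentration step gives the $\beta_3\sqrt{\log(1/\delta)/n_2}$ and $6\gamma\log(1/\delta)/n_2$ terms verbatim with $n=n_2$.
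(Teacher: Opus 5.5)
Your proposal is correct and matches the paper, which obtains this theorem simply by rerunning the argument of Theorem~\ref{k-means  generalization error bound} (Lemmas~\ref{Rademacher complexity0} and~\ref{Generalization error 0}) with $\mathbb{Q}\to\mathbb{Q}_c$, $D\to D_c$, $n\to n_2$. On the logarithm: the proof of Theorem~\ref{k-means  generalization error bound} uses Lemma~\ref{Rademacher complexity0}, which already gives $\log^2(2n\sqrt{2\gamma n})$, so the missing factor $n$ in that theorem's statement is a typo and $\alpha_3$ follows by exact substitution---this is cleaner than your a fortiori monotonicity step, which would also need $2\sqrt{2\gamma n_2}\ge 1$.
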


As Theorem\,\ref{k-means  generalization error bound} provides the foundation for feature incremental clustering, the generalization error bound in Theorem\,\ref{Data Adaptation  theorem} is established using a similar proof technique.
\begin{remark}
{\rm For FIC-DA, the generalization bound is primarily constrained by the limited sample size of the current stage. However, if new features offer performance gains, the bound can be further tightened by incorporating additional data to leverage this increased dimensionality.}
\end{remark}

\textbf{For FIC-MR}, let $\tilde{\mathbf{U}}_{4} =(\tilde{\mathbf{U}}_{4} ^{(1)}, \tilde{\mathbf{U}}_{4} ^{(2)})$ be the empirical risk minimizer of Eq.\,(\ref{modelresue_Ob}) according to Definition \ref{emprical center define}. We define the hybrid center as $\hat{\mathbf{U}}_{4}=(\mathbf{U}^{0},\tilde{\mathbf{U}}_{4}^{(2)}),$ ntegrates the optimized cluster center from the previous stage ($\mathbf{U}^{0}$) with the newly optimized components ($\tilde{\mathbf{U}}_{4}^{(2)}$). The generalization error bound for FIC-MR is then presented in Theorem\,\ref{Model Resue theorem}.
\begin{theorem}	 [\textbf{FIC-MR}]
\label{Model Resue theorem}
Consider training the clustering model in (\ref{modelresue_Ob}) with a sample set  $D_c  $ comprising $n_2$ instances, where $\mathbf{U}^{0}$ is a well-trained cluster center from the previous stage.
If $\lVert\mathbf{x}\rVert\leq \gamma$ for $ \forall\mathbf{x}\in \mathcal{X}_c$, then for any $\delta \in(0,1)$ and $\forall \mathbf{U}_4\in \mathcal{H}^{k}$, there exists a constant $V_4$ such that, with probability at least $1-\delta$, we have
\begin{equation}
\label{Model Resue inequality}
\begin{aligned}
\mathcal{L}_{\mathbb{Q}_{c}}(\mathbf{U}_4)\leq &\hat{\mathcal{L}}_{D_c}(\mathbf{U}_4) 
+\frac{\alpha_4+3\beta_4\sqrt{\log (1 / \delta)}}{\sqrt{n_2}}
+\frac{6\gamma\log (1 / \delta)}{n_2},
\end{aligned}
\end{equation}
where $\alpha_4 =4V_4\varepsilon \sqrt {k}\log ^{2}(2n_2\sqrt{(\gamma+\gamma_4) n_2})$,  $\varepsilon= \gamma(\sqrt{\frac{\Gamma(\mathbf{U}^0)  }{\theta}}+ \sqrt{\gamma_4^{2}- (\gamma_0-\sqrt{\frac{\Gamma(\mathbf{U}^0)  }{\theta}})^{2}}+\frac{\gamma }{2})  + \frac{(\gamma_4' +\gamma_0 ) \sqrt{\frac{\Gamma(\mathbf{U}^0)  }{\theta}} +\gamma_4''^2}{2}$. 
The term $\Gamma(\mathbf{U}^0) = \mathbb{E}[\Gamma_{n_2}(\mathbf{U}^0)]$ quantifies the expected clustering risk under the current data distribution, where the expectation is taken with respect to the empirical risk $\Gamma_{n_2}(\mathbf{U}^0)$ computed on the current feature matrix $\mathbf{X}_2^{(1)}$. For $\mathbf{U}_4=[\mathbf{u}_{1},\ldots,\mathbf{u}_{k}]$, assume $\lVert\mathbf{u}_{s}\rVert \leq \gamma_4$,  $\lVert \mathbf{u}_{s}^{(1)}\rVert \leq \gamma_4' \leq\gamma_4 $, $\lVert \mathbf{u}_{s}^{(2)}\rVert \leq \gamma_4''\leq\gamma_4$, for $s=1, \ldots, k$. Furthermore, $\beta_4 = \sqrt{(\mathcal{L}_{Q_c}(\hat{\mathbf{U}}_4)+4\alpha_4)\gamma}$, where $ \mathcal{L}_{Q_c}(\hat{\mathbf{U}}_4)$ denotes the expected clustering risk of the cluster center $\hat{\mathbf{U}}_4$ under the current distribution $\mathbb{Q}_c$.
\end{theorem}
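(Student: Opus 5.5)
The plan is to run the argument behind Theorem~\ref{k-means  generalization error bound} again, but over a localized hypothesis class. Instead of all of $\mathcal{H}^k$, the centers will be confined to a ball around the hybrid center $\hat{\mathbf{U}}_4=(\mathbf{U}^0,\tilde{\mathbf{U}}_4^{(2)})$, and the loss will be centered at $h_{\hat{\mathbf{U}}_4}$. The scale $\varepsilon$ will then appear in front of the complexity term in place of $\gamma^2$, and the variance proxy will be $\mathcal{L}_{\mathbb{Q}_c}(\hat{\mathbf{U}}_4)$ instead of a supremum of empirical risks.

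\textbf{Step 1 (localization from the regularizer).} By optimality of $\tilde{\mathbf{U}}_4$ in Eq.\,(\ref{modelresue_Ob}), compare its objective with that of the feasible point $(\mathbf{U}^0,\cdot)$ whose new-feature part is the optimal one:
\begin{equation*}
\theta\lVert\tilde{\mathbf{U}}_4^{(1)}-\mathbf{U}^0\rVert^2\le \Gamma_{n_2}(\mathbf{U}^0),
\end{equation*}
where $\Gamma_{n_2}(\mathbf{U}^0)$ is the empirical risk of the old centers on $\mathbf{X}_2^{(1)}$. Taking expectations (or replacing by $\Gamma(\mathbf{U}^0)$ up to lower order) gives $\lVert\mathbf{u}_s^{(1)}-\mathbf{u}_s^0\rVert\le r:=\sqrt{\Gamma(\mathbf{U}^0)/\theta}$. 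Combining this with $\lVert\mathbf{u}_s\rVert\le\gamma_4$ and $\lVert\mathbf{u}_s^0\rVert\ge\gamma_0$ yields $\lVert\mathbf{u}_s^{(1)}\rVert\ge\gamma_0-r$, and hence $\lVert\mathbf{u}_s^{(2)}\rVert\le\sqrt{\gamma_4^2-(\gamma_0-r)^2}$. This explains the square-root term in $\varepsilon$.

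\textbf{Step 2 (Lipschitz/range bound for the localized class).} For $\mathbf{x}$ with $\lVert\mathbf{x}\rVert\le\gamma$, I will expand
\begin{equation*}
\lVert\mathbf{x}-\mathbf{u}_s\rVert^2-\lVert\mathbf{x}-\hat{\mathbf{u}}_s\rVert^2=-2\langle\mathbf{x},\mathbf{u}_s-\hat{\mathbf{u}}_s\rangle+\lVert\mathbf{u}_s\rVert^2-\lVert\hat{\mathbf{u}}_s\rVert^2,
\end{equation*}
and split it into old and new coordinates. The old part is bounded by $\gamma r+(\gamma_4'+\gamma_0)r/2$ via the factorization $a^2-b^2=(a-b)(a+b)$. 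The new part is bounded by $\gamma\sqrt{\gamma_4^2-(\gamma_0-r)^2}+\gamma_4''^2/2$, plus a $\gamma^2/2$ slack. Since $\psi$ (the minimum) is $1$-Lipschitz in the sup norm, the shifted functions $h_{\mathbf{U}}-h_{\hat{\mathbf{U}}_4}$ have range and Lipschitz constant controlled by $\varepsilon$. Absorbing the factors of $2$ from this expansion accounts for the constant $4\varepsilon$ replacing $8\gamma^2$.

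\textbf{Step 3 (complexity and concentration).} Using the same vector-contraction/covering argument for the $k$-valued class as in the proof of Theorem~\ref{k-means  generalization error bound}, I will bound the CRC of the localized class by $\alpha_4/\sqrt{n_2}$ with $\alpha_4=4V_4\varepsilon\sqrt{k}\log^2(\cdot)$. The log argument will now contain $\gamma+\gamma_4$, since the radius of the domain entering the covering is enlarged to account for the center norms. Next I will apply a Bousquet/Talagrand-type inequality with variance bounded as $\mathrm{Var}\le\gamma\,\mathbb{E}h$ (bounded nonnegative loss). A peeling or fixed-point step then gives deviation $3\sqrt{(\mathcal{L}_{\mathbb{Q}_c}(\hat{\mathbf{U}}_4)+4\alpha_4)\gamma\log(1/\delta)/n_2}+6\gamma\log(1/\delta)/n_2$. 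Because the class is centered at $\hat{\mathbf{U}}_4$, the variance proxy is $\mathcal{L}_{\mathbb{Q}_c}(\hat{\mathbf{U}}_4)$, and the $2\sqrt{2}$ term of the non-localized bound disappears. This is where the claimed faster rate comes from: if $\mathbf{U}^0$ is good so that $\Gamma(\mathbf{U}^0)$ and $\mathcal{L}(\hat{\mathbf{U}}_4)$ are small, the $1/\sqrt{n_2}$ terms vanish.

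\textbf{Main obstacle.} The hardest step will be Step 1 together with its interplay with Step 3. The localization radius is data-dependent, through $\Gamma_{n_2}$ and through $\tilde{\mathbf{U}}_4^{(2)}$, yet the concentration requires a fixed class. My first attempt will be to define the class deterministically using the expected $\Gamma(\mathbf{U}^0)$ and the norm bounds $\gamma_4',\gamma_4''$. I will then treat $\hat{\mathbf{U}}_4$ through a union/peeling argument over the radius, accepting that the stated bound holds for $\mathbf{U}_4$ in this localized class. The remaining steps are routine adaptations of the Theorem~\ref{k-means  generalization error bound} machinery.
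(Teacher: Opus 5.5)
Your overall structure matches the paper's: localization by comparing against $\hat{\mathbf{U}}_4$, then $\ell_\infty$ vector contraction, then Bousquet's inequality. The step that fails is Step~3's claim that centering the class at $h_{\hat{\mathbf{U}}_4}$ makes $\mathcal{L}_{\mathbb{Q}_c}(\hat{\mathbf{U}}_4)$ the variance proxy.

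\textbf{Random center.} $\hat{\mathbf{U}}_4=(\mathbf{U}^0,\tilde{\mathbf{U}}_4^{(2)})$ contains the empirical minimizer. So $\{h_{\mathbf{U}}-h_{\hat{\mathbf{U}}_4}\}$ is a sample-dependent class, and Bousquet's inequality (Lemma~\ref{Bernstein-type concentration}) requires a fixed class. Peeling over the radius handles a random radius, not a random center. A union bound over possible $\tilde{\mathbf{U}}_4^{(2)}$ brings the supremum back.

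\textbf{Wrong variance mechanism, even for a fixed center.} For the uncentered losses, $\mathrm{Var}(h_{\mathbf{U}})\lesssim\gamma\,\mathcal{L}_{\mathbb{Q}_c}(\mathbf{U})$ gives a supremum of risks over a class whose new-feature blocks range over a whole ball. The centered differences take both signs, so that inequality does not apply, and their variance is governed by the range of $h_{\mathbf{U}}-h_{\hat{\mathbf{U}}}$. $\mathcal{L}_{\mathbb{Q}_c}(\hat{\mathbf{U}})$ could enter only through a separate Bernstein bound on the single function $h_{\hat{\mathbf{U}}}$. That again needs $\hat{\mathbf{U}}$ fixed, and it leaves an extra deviation term the statement does not contain.

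\textbf{What the paper does instead.} It never centers. It applies Lemma~\ref{Generalization error 0} to the uncentered class $\mathcal{J}_{\mathbf{U}_4}$, whose variance proxy is $\lambda_4=\mathbb{E}[\sup\hat{\mathcal{L}}_{D_c}(\mathbf{U}_4)]$. It then replaces $\lambda_4$ by $\mathcal{L}_{\mathbb{Q}_c}(\hat{\mathbf{U}}_4)$ by appealing to $\hat{\mathcal{L}}_{D_c}(\tilde{\mathbf{U}}_4)\le\hat{\mathcal{L}}_{D_c}(\hat{\mathbf{U}}_4)$. The paper asserts this replacement rather than deriving it, but that is how $\hat{\mathbf{U}}_4$ enters $\beta_4$. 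No peeling is needed: $3\sqrt{(\cdot+4\alpha_4)\gamma\log(1/\delta)/n_2}$ is just Bousquet's $t=n\eta^2+2\mathbb{E}[G]$ with $8\mathfrak{R}(\mathcal{J}_{\mathbf{U}_4})\le4\alpha_4/\sqrt{n_2}$. The missing $2\sqrt{2}$ term is not a centering effect either. The paper bounds $\mathfrak{R}(\mathcal{J}_{\mathbf{U}_4})=\mathbb{E}[\mathfrak{R}_{n_2}(\mathcal{J}_{\mathbf{U}_4})]$ directly by the sample-uniform bound of Lemma~\ref{Rademacher complexity_model resue}. It does not use the concentration of $\mathfrak{R}_n$ around $\mathfrak{R}$ that was used for Theorem~\ref{k-means  generalization error bound}.

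\textbf{Step 1.} Taking expectations of $\theta\lVert\tilde{\mathbf{U}}_4^{(1)}-\mathbf{U}^0\rVert^2\le\Gamma_{n_2}(\mathbf{U}^0)$ gives no pointwise bound $\lVert\mathbf{u}_s^{(1)}-\mathbf{u}_s^0\rVert\le\sqrt{\Gamma(\mathbf{U}^0)/\theta}$. The paper keeps the data-dependent radius $\Delta$ inside $\mathfrak{R}_{n_2}(\mathcal{J}_{\mathbf{U}_4})\le2V_4\varepsilon(\Delta)\sqrt{k/n_2}\log^2(\cdot)$ and averages only afterwards. There $\varepsilon(\Delta)$ is affine in $\Delta$ except for the concave term $\sqrt{\gamma_4^2-(\gamma_0-\Delta)^2}$, so Jensen plus $\mathbb{E}\Delta\le\sqrt{\mathbb{E}[\Gamma_{n_2}(\mathbf{U}^0)]/\theta}$ produces $\Gamma(\mathbf{U}^0)$. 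Your deterministic-class fallback is a cleaner route to rigor, since $\mathbf{U}^0$ is independent of $D_c$ and $\Gamma_{n_2}(\mathbf{U}^0)$ concentrates. However, it inflates the radius and spends part of $\delta$, so it does not return the stated $\varepsilon$.

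\textbf{Origin of $\varepsilon$.} It is not a range bound for centered differences. It is defined by $\max_s\widetilde{\mathfrak{R}}_{n_2}(\mathcal{G}_{\mathbf{U}_4^{(s)}})\le2\varepsilon/\sqrt{n_2}$, the input to the Foster--Rakhlin contraction. The paper obtains it by splitting $\lVert\mathbf{x}-\mathbf{u}\rVert^2$ into $\lVert\mathbf{x}\rVert^2$, $-2\langle\mathbf{x},\mathbf{u}\rangle$ and $\lVert\mathbf{u}\rVert^2$, and shifting only the old-feature block by $\mathbf{u}^0$. The $\gamma^2/2$ you call slack is the $\lVert\mathbf{x}\rVert^2$ term, which would cancel in a centered class. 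Centering the new block at $\tilde{\mathbf{u}}_s^{(2)}$ would also double the $\gamma\sqrt{\gamma_4^2-(\gamma_0-r)^2}$ contribution.

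\textbf{Repair.} Drop the centering. Bound the coordinate complexities over centers whose old block lies near $\mathbf{U}^0$, average over $\Delta$, and feed the result into Lemma~\ref{Generalization error 0}. $\mathcal{L}_{\mathbb{Q}_c}(\hat{\mathbf{U}}_4)$ then has to come from controlling $\lambda_4$, not from centering.
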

\noindent\textbf{See the proof of Theorem\,\ref{Model Resue theorem} in Section \ref{Model Resue theorem proof}}.
\begin{remark}
{\rm In Theorem\,\ref{Model Resue theorem}, FIC-MR achieves a generalization error bound with a convergence rate of $\tilde{\mathcal{O}}(\sqrt{\frac{k}{n_2}}+\frac{1}{n_2})$, which is influenced by the adaptation term $\Gamma(\mathbf{U}^0)$. A lower $\Gamma(\mathbf{U}^0)$ implies that $\mathbf{U}^0$ lies in a low-risk region of the current hypothesis space.
Notably, when the cluster centers $\mathbf{U}^0$ align well with the current distribution (i.e., $\Gamma(\mathbf{U}^0) \to 0$), the bound in \eqref{Model Resue inequality} tightens to a fast rate of 
$\tilde{\mathcal{O}}(\frac{1}{n_2})$.
}
\end{remark}

\begin{table*}[htb]
	\footnotesize
	\centering
	\caption{A brief description of data set.}
	\vskip -0.1 in
	\label{datasetdescribe}
	\renewcommand{\arraystretch}{0.8}  
	\setlength{\tabcolsep}{1.4mm}{ 
		\begin{tabular}{c c c c c c c c c c c c c}
			\toprule[1.5pt]
			Information      &Dermatology   &Caltech101-7  & Indoor & Event  & Handwritten    &Scene8   & Wiki&KSA   & Mnist \\
			\midrule[0.75pt]
			Sample Size                 &366   &441    &621      &1579     &2000      &2688     &2866   &20000        &100000\\
			Previous Features     &11      &48      &32         &500        &76           &512        &128     &360             &256\\
			Current Features       &22      &411   &512		  &512        &216         &200        &10        &45               &32\\
			Class            				  &6         & 7      &5           &8            &10           &8            &7          & 5                &10\\
			\bottomrule[1.5pt]
		\end{tabular}
	}
\end{table*}

\section{Experiments}  
\label{Experiments Section}
In this section, we validate our theory through experiments on real-world datasets. We compare four proposed incremental clustering algorithms with relevant baselines, analyze the effects of feature increments and the reuse parameter $\theta$, and demonstrate practical utility by an activity recognition task.

\subsection{Dataset Description and Experimental Setup}
\label{}
\subsubsection{Dataset Description}

We use publicly available benchmark datasets: Dermatology, Caltech101, Indoor, Event, Handwritten, Scene8, Wiki, KSA, and MNIST, with 366–100,000 instances and 5–10 classes. Unlike traditional settings with fixed feature space, we consider a feature-incremental clustering scenario where data arrive in two stages with newly introduced features. Specifically, one feature type is treated as previous and another as current; for example, MNIST uses 256-dimensional LBP features initially, followed by 32-dimensional WT features. We offer detailed descriptions of the datasets utilized in this paper, along with a brief overview listed in Table \ref{datasetdescribe}.

\renewcommand
\tabcolsep{7.7pt}   
\begin{table*}[htbp]
	\footnotesize
	\centering
	\caption{The average  performance results of  all methods with different RAs. The best results are highlighted in boldface.}
	\label{one table}
	\vskip -0.08 in
	\begin{adjustbox}{center}  
		\begin{tabular}{c|c|c|cc|cccc}
			\toprule[1.5pt]
			\midrule[0.75pt]
			Dataset & Evaluation Criteria & RA & KM-P1 & KM-C1  & FIC-FT & FIC-DR& FIC-DA & FIC-MR  \\
			\midrule[0.75pt]
			\multirow{13}{*}{Dermatology} 
			& \multirow{5}{*}{ACC}
			&10\%	&0.721(0.058)	&0.652(0.054)	&0.739(0.083)	&\textbf{0.772(0.055)}	&0.717(0.078)	&0.751(0.086)\\  
			&&20\%	&0.747(0.050)	&0.668(0.067)	&0.741(0.068)	&\textbf{0.789(0.076)}	&0.756(0.087)	&0.786(0.057)\\  
			&&30\%	&0.747(0.071)	&0.681(0.075)	&0.765(0.074)	&0.771(0.077)	&0.741(0.070)	&\textbf{0.801(0.045)}\\  
			&&40\%	&0.738(0.069)	&0.706(0.079)	&0.739(0.074)	&0.814(0.104)	&0.793(0.084)	&\textbf{0.817(0.072)}\\  
			\cline{2-9}	
			& \multirow{5}{*}{F-score}
			&10\%	&0.649(0.051)	&0.622(0.077)	&0.666(0.083)	&0.766(0.064)	&0.747(0.088)	&\textbf{0.778(0.085)}\\  
			&&20\%	&0.665(0.047)	&0.634(0.069)	&0.666(0.067)	&\textbf{0.807(0.070)}	&0.758(0.076)	&0.794(0.066)\\  
			&&30\%	&0.673(0.064)	&0.624(0.055)	&0.673(0.073)	&0.799(0.077)	&0.737(0.080)	&\textbf{0.807(0.052)}\\  
			&&40\%	&0.660(0.065)	&0.652(0.076)	&0.679(0.069)	&0.812(0.098)	&0.817(0.079)	&\textbf{0.843(0.066)}\\  
			\cline{2-9}
			& \multirow{5}{*}{NMI}
			&10\%	&0.692(0.039)	&0.655(0.045)	&0.695(0.069)	&0.792(0.047)	&0.789(0.038)	&\textbf{0.805(0.038)}\\  
			&&20\%	&0.699(0.038)	&0.665(0.041)	&0.697(0.053)	&\textbf{0.838(0.041)}	&0.815(0.046)	&0.836(0.047)\\  
			&&30\%	&0.704(0.044)	&0.633(0.043)	&0.694(0.058)	&0.809(0.072)	&0.782(0.066)	&\textbf{0.838(0.042)}\\  
			&&40\%	&0.695(0.036)	&0.686(0.054)	&0.701(0.050)	&0.848(0.060)	&0.859(0.038)	&\textbf{0.878(0.043)}\\  
			\midrule[0.75pt]
			
			\multirow{13}{*}{Caltech101-7} 
			& \multirow{5}{*}{ACC}
			&10\%	&0.459(0.091)	&0.436(0.031)	&0.483(0.070)	&0.491(0.057)	&0.549(0.041)	&\textbf{0.567(0.053)}\\  
			&&20\%	&0.429(0.103)	&0.367(0.087)	&0.444(0.087)	&0.504(0.055)	&\textbf{0.570(0.048)}	&0.562(0.045)\\  
			&&30\%	&0.466(0.070)	&0.402(0.052)	&0.444(0.095)	&0.525(0.068)	&0.568(0.069)	&\textbf{0.597(0.067)}\\  
			&&40\%	&0.489(0.064)	&0.434(0.057)	&0.473(0.068)	&0.522(0.058)	&0.553(0.037)	&\textbf{0.564(0.064)}\\   
			\cline{2-9}	
			& \multirow{5}{*}{F-score}
			&10\%	&0.413(0.061)	&0.393(0.028)	&0.422(0.049)	&\textbf{0.496(0.054)}	&0.481(0.036)	&0.492(0.028)\\  
			&&20\%	&0.392(0.069)	&0.338(0.052)	&0.403(0.061)	&\textbf{0.545(0.050)}	&0.535(0.023)	&0.535(0.028)\\  
			&&30\%	&0.419(0.054)	&0.361(0.048)	&0.403(0.067)	&0.568(0.060)	&0.564(0.045)	&\textbf{0.580(0.049)}\\  
			&&40\%	&0.432(0.047)	&0.390(0.057)	&0.422(0.058)	&0.528(0.064)	&0.551(0.039)	&\textbf{0.551(0.049)}\\  
			\cline{2-9}
			& \multirow{5}{*}{NMI}
			&10\%	&0.417(0.082)	&0.392(0.028)	&0.429(0.060)	&0.504(0.055)	&\textbf{0.535(0.020)}	&0.535(0.014)\\  
			&&20\%	&0.382(0.099)	&0.279(0.066)	&0.403(0.077)	&\textbf{0.574(0.055)}	&0.563(0.029)	&0.567(0.030)\\  
			&&30\%	&0.419(0.075)	&0.332(0.064)	&0.408(0.090)	&0.600(0.070)	&0.614(0.039)	&\textbf{0.637(0.043)}\\  
			&&40\%	&0.438(0.065)	&0.369(0.067)	&0.430(0.078)	&0.568(0.061)	&0.599(0.043)	&\textbf{0.617(0.043)}\\  
			\midrule[0.75pt]

			\multirow{13}{*}{Indoor} 
			& \multirow{5}{*}{ACC}
			&10\%	&0.373(0.025)	&0.377(0.031)	&0.372(0.025)	&0.348(0.060)	&0.398(0.056)	&\textbf{0.455(0.042)}\\  
			&&20\%	&0.367(0.024)	&0.350(0.033)	&0.367(0.026)	&0.324(0.055)	&0.467(0.051)	&\textbf{0.482(0.045)}\\  
			&&30\%	&0.361(0.017)	&0.380(0.015)	&0.371(0.023)	&0.302(0.032)	&0.478(0.053)	&\textbf{0.499(0.048)}\\  
			&&40\%	&0.363(0.024)	&0.360(0.033)	&0.375(0.018)	&0.287(0.030)	&0.477(0.027)	&\textbf{0.485(0.049)}\\  
			\cline{2-9}
			& \multirow{5}{*}{F-score}
			&10\%	&0.302(0.012)	&0.312(0.023)	&0.295(0.015)	&0.358(0.042)	&0.347(0.037)	&\textbf{0.381(0.032)}\\  
			&&20\%	&0.300(0.014)	&0.284(0.024)	&0.293(0.013)	&0.353(0.034)	&0.376(0.051)	&\textbf{0.396(0.039)}\\  
			&&30\%	&0.297(0.013)	&0.301(0.011)	&0.287(0.009)	&0.365(0.041)	&0.386(0.056)	&\textbf{0.400(0.039)}\\  
			&&40\%	&0.293(0.012)	&0.297(0.018)	&0.302(0.014)	&0.372(0.033)	&0.377(0.023)	&\textbf{0.381(0.032)}\\  
			\cline{2-9}
			& \multirow{5}{*}{NMI}
			&10\%	&0.173(0.026)	&0.182(0.038)	&0.174(0.020)	&0.190(0.082)	&0.224(0.059)	&\textbf{0.286(0.045)}\\  
			&&20\%	&0.165(0.028)	&0.137(0.022)	&0.172(0.030)	&0.170(0.088)	&0.272(0.051)	&\textbf{0.294(0.042)}\\  
			&&30\%	&0.155(0.021)	&0.193(0.015)	&0.168(0.026)	&0.172(0.112)	&0.302(0.070)	&\textbf{0.316(0.060)}\\  
			&&40\%	&0.162(0.025)	&0.167(0.030)	&0.181(0.019)	&0.187(0.119)	&0.294(0.032)	&\textbf{0.299(0.037)}\\ 
			\midrule[0.75pt]

			\multirow{13}{*}{Event} 
			& \multirow{5}{*}{ACC}
			&10\%	&0.325(0.027)	&0.296(0.020)	&0.315(0.025)	&0.326(0.028)	&0.348(0.027)	&\textbf{0.351(0.034)}\\  
			&&20\%	&0.304(0.019)	&0.302(0.018)	&0.305(0.020)	&0.339(0.032)	&0.367(0.041)	&\textbf{0.376(0.037)}\\  
			&&30\%	&0.305(0.022)	&0.325(0.013)	&0.313(0.024)	&0.343(0.021)	&0.364(0.026)	&\textbf{0.379(0.041)}\\  
			&&40\%	&0.314(0.021)	&0.322(0.019)	&0.313(0.013)	&0.369(0.033)	&0.383(0.033)	&\textbf{0.384(0.028)}\\   
			\cline{2-9}	
			& \multirow{5}{*}{F-score}
			&10\%	&0.244(0.019)	&0.231(0.016)	&0.240(0.019)	&0.247(0.020)	&0.250(0.022)	&\textbf{0.255(0.017)}\\  
			&&20\%	&0.229(0.011)	&0.230(0.010)	&0.233(0.013)	&0.255(0.018)	&0.273(0.026)	&\textbf{0.275(0.020)}\\  
			&&30\%	&0.234(0.015)	&0.245(0.013)	&0.239(0.018)	&0.261(0.015)	&0.275(0.013)	&\textbf{0.282(0.023)}\\  
			&&40\%	&0.236(0.016)	&0.241(0.013)	&0.235(0.011)	&0.266(0.017)	&\textbf{0.273(0.019)}	&0.273(0.016)\\   
			\cline{2-9}
			& \multirow{5}{*}{NMI}
			&10\%	&0.241(0.031)	&0.205(0.028)	&0.233(0.032)	&0.237(0.025)	&0.238(0.028)	&\textbf{0.246(0.028)}\\  
			&&20\%	&0.212(0.022)	&0.224(0.025)	&0.220(0.024)	&0.245(0.024)	&0.265(0.032)	&\textbf{0.269(0.027)}\\  
			&&30\%	&0.224(0.025)	&0.248(0.019)	&0.236(0.033)	&0.251(0.021)	&0.266(0.015)	&\textbf{0.272(0.029)}\\  
			&&40\%	&0.225(0.029)	&0.249(0.018)	&0.229(0.021)	&0.261(0.024)	&\textbf{0.273(0.023)}	&0.272(0.020)\\  
			\midrule[0.75pt]

			\multirow{13}{*}{Handwritten} 
			& \multirow{5}{*}{ACC}
			&10\%	&0.533(0.052)	&0.478(0.022)	&0.515(0.051)	&0.572(0.063)	&0.621(0.069)	&\textbf{0.651(0.042)}\\  
			&&20\%	&0.506(0.057)	&0.571(0.058)	&0.497(0.039)	&0.634(0.058)	&0.662(0.076)	&\textbf{0.690(0.062)}\\  
			&&30\%	&0.519(0.039)	&0.529(0.043)	&0.547(0.052)	&0.606(0.090)	&0.694(0.069)	&\textbf{0.739(0.070)}\\  
			&&40\%	&0.507(0.061)	&0.620(0.052)	&0.544(0.040)	&0.675(0.077)	&0.732(0.063)	&\textbf{0.740(0.062)}\\ 
			\cline{2-9}	
			& \multirow{5}{*}{F-score}
			&10\%	&0.449(0.033)	&0.391(0.022)	&0.443(0.042)	&0.537(0.046)	&0.544(0.054)	&\textbf{0.571(0.035)}\\  
			&&20\%	&0.437(0.028)	&0.450(0.026)	&0.426(0.028)	&0.586(0.040)	&0.611(0.059)	&\textbf{0.628(0.041)}\\  
			&&30\%	&0.444(0.027)	&0.440(0.026)	&0.453(0.031)	&0.586(0.054)	&0.635(0.053)	&\textbf{0.662(0.048)}\\  
			&&40\%	&0.437(0.040)	&0.505(0.034)	&0.463(0.023)	&0.634(0.054)	&0.669(0.049)	&\textbf{0.681(0.042)}\\  
			\cline{2-9}
			& \multirow{5}{*}{NMI}
			&10\%	&0.555(0.034)	&0.478(0.024)	&0.544(0.038)	&0.654(0.037)	&0.655(0.046)	&\textbf{0.684(0.027)}\\  
			&&20\%	&0.539(0.027)	&0.555(0.025)	&0.534(0.019)	&0.694(0.037)	&0.712(0.046)	&\textbf{0.724(0.031)}\\  
			&&30\%	&0.542(0.024)	&0.521(0.024)	&0.545(0.029)	&0.692(0.039)	&0.728(0.039)	&\textbf{0.747(0.031)}\\  
			&&40\%	&0.540(0.031)	&0.600(0.027)	&0.564(0.019)	&0.728(0.040)	&0.754(0.034)	&\textbf{0.764(0.029)}\\  
			\bottomrule[1.5pt]
		\end{tabular}
	\end{adjustbox}
\end{table*}

\renewcommand
\tabcolsep{9pt}   
\begin{table*}[!]
	\footnotesize
	\renewcommand
	\arraystretch{0.98}
	\caption{The average performance results of all methods with different RAs. The best results are highlighted in boldface.}
	\label{two table}
	\centering
	\vskip -0.08 in
	\begin{adjustbox}{center}
		\begin{tabular}{c|c|c|cc|cccc}
			\toprule[1.5pt]
			\midrule[0.75pt]
			Dataset & Evaluation Criteria & RA & KM-P1 & KM-C1  & FIC-FT & FIC-DR& FIC-DA & FIC-MR  \\
			\midrule[0.75pt]
			
			\multirow{13}{*}{Scene8} 
			& \multirow{5}{*}{ACC}
			&10\%	&0.558(0.036)	&0.490(0.051)	&0.568(0.042)	&0.572(0.055)	&0.511(0.050)	&\textbf{0.584(0.040)}\\  
			&&20\%	&0.578(0.037)	&0.494(0.026)	&0.564(0.034)	&0.579(0.046)	&0.533(0.043)	&\textbf{0.598(0.044)}\\  
			&&30\%	&0.564(0.034)	&0.512(0.039)	&0.555(0.036)	&0.556(0.048)	&0.548(0.039)	&\textbf{0.586(0.028)}\\  
			&&40\%	&0.569(0.040)	&0.521(0.031)	&0.549(0.037)	&0.573(0.039)	&0.577(0.029)	&\textbf{0.590(0.034)}\\  
			\cline{2-9}	
			& \multirow{5}{*}{F-score}
			&10\%	&0.435(0.020)	&0.382(0.031)	&0.440(0.023)	&0.458(0.030)	&0.415(0.034)	&\textbf{0.473(0.019)}\\  
			&&20\%	&0.442(0.021)	&0.386(0.015)	&0.440(0.021)	&0.464(0.022)	&0.436(0.032)	&\textbf{0.479(0.028)}\\  
			&&30\%	&0.434(0.019)	&0.407(0.027)	&0.430(0.021)	&0.453(0.031)	&0.440(0.026)	&\textbf{0.473(0.015)}\\  
			&&40\%	&0.442(0.023)	&0.403(0.020)	&0.430(0.016)	&0.464(0.018)	&0.457(0.020)	&\textbf{0.474(0.021)}\\ 
			\cline{2-9}
			& \multirow{5}{*}{NMI}
			&10\%	&0.472(0.018)	&0.409(0.031)	&0.472(0.020)	&0.499(0.029)	&0.454(0.038)	&\textbf{0.517(0.017)}\\  
			&&20\%	&0.472(0.020)	&0.409(0.020)	&0.471(0.021)	&0.503(0.021)	&0.479(0.034)	&\textbf{0.515(0.032)}\\  
			&&30\%	&0.465(0.016)	&0.431(0.027)	&0.466(0.016)	&0.500(0.030)	&0.482(0.024)	&\textbf{0.512(0.014)}\\  
			&&40\%	&0.473(0.019)	&0.428(0.019)	&0.461(0.018)	&0.502(0.024)	&0.493(0.025)	&\textbf{0.517(0.024)}\\ 
			\midrule[0.75pt]
			
			\multirow{13}{*}{Wiki} 
			& \multirow{5}{*}{ACC}
			&10\%	&0.188(0.007)	&0.177(0.010)	&0.189(0.005)	&0.299(0.066)	&0.511(0.068)	&\textbf{0.543(0.048)}\\  
			&&20\%	&0.186(0.005)	&0.192(0.007)	&0.187(0.010)	&0.308(0.069)	&0.547(0.045)	&\textbf{0.552(0.041)}\\  
			&&30\%	&0.188(0.006)	&0.186(0.008)	&0.186(0.007)	&0.366(0.054)	&0.532(0.057)	&\textbf{0.560(0.051)}\\  
			&&40\%	&0.187(0.006)	&0.196(0.008)	&0.189(0.006)	&0.409(0.039)	&0.530(0.074)	&\textbf{0.560(0.048)}\\  
			\cline{2-9}	
			& \multirow{5}{*}{F-score}
			&10\%	&0.140(0.006)	&0.152(0.007)	&0.142(0.005)	&0.372(0.052)	&0.441(0.031)	&\textbf{0.445(0.032)}\\  
			&&20\%	&0.140(0.006)	&0.136(0.005)	&0.139(0.006)	&0.390(0.053)	&\textbf{0.473(0.021)}	&0.471(0.022)\\  
			&&30\%	&0.139(0.006)	&0.132(0.006)	&0.139(0.007)	&0.395(0.034)	&0.450(0.017)	&\textbf{0.460(0.019)}\\  
			&&40\%	&0.140(0.005)	&0.137(0.005)	&0.141(0.005)	&0.446(0.049)	&0.461(0.031)	&\textbf{0.462(0.019)}\\ 
			\cline{2-9}
			& \multirow{5}{*}{NMI}
			&10\%	&0.073(0.006)	&0.062(0.007)	&0.072(0.005)	&0.490(0.036)	&0.529(0.019)	&\textbf{0.539(0.010)}\\  
			&&20\%	&0.073(0.003)	&0.070(0.004)	&0.073(0.006)	&0.503(0.042)	&0.558(0.014)	&\textbf{0.559(0.009)}\\  
			&&30\%	&0.074(0.005)	&0.068(0.006)	&0.071(0.004)	&0.519(0.029)	&0.536(0.012)	&\textbf{0.541(0.015)}\\  
			&&40\%	&0.074(0.004)	&0.076(0.005)	&0.073(0.006)	&0.542(0.028)	&0.546(0.023)	&\textbf{0.548(0.008)}\\ 
			\midrule[0.75pt]

			\multirow{13}{*}{KSA} 
			& \multirow{5}{*}{ACC}
			&10\%	&0.628(0.053)	&0.602(0.033)	&0.609(0.041)	&0.612(0.044)	&0.603(0.037)	&\textbf{0.646(0.051)}\\  
			&&20\%	&0.625(0.043)	&0.620(0.043)	&0.631(0.052)	&0.636(0.042)	&0.617(0.038)	&\textbf{0.638(0.037)}\\  
			&&30\%	&0.622(0.045)	&0.613(0.029)	&0.627(0.037)	&0.626(0.041)	&0.625(0.025)	&\textbf{0.637(0.050)}\\  
			&&40\%	&0.622(0.031)	&0.595(0.024)	&0.634(0.035)	&0.614(0.043)	&0.604(0.029)	&\textbf{0.636(0.038)}\\  
			\cline{2-9}
			
			& \multirow{5}{*}{F-score}
			&10\%	&0.563(0.030)	&0.532(0.032)	&\textbf{0.567(0.027)}	&0.540(0.033)	&0.528(0.023)	&0.560(0.038)\\  
			&&20\%	&\textbf{0.564(0.032)}	&0.534(0.037)	&0.555(0.027)	&0.560(0.029)	&0.542(0.036)	&0.556(0.033)\\  
			&&30\%	&\textbf{0.555(0.034)}	&0.537(0.028)	&0.551(0.028)	&0.549(0.033)	&0.543(0.031)	&0.545(0.043)\\  
			&&40\%	&\textbf{0.563(0.032)}	&0.521(0.037)	&0.557(0.024)	&0.543(0.039)	&0.516(0.033)	&0.548(0.042)\\  
			\cline{2-9}
			
			& \multirow{5}{*}{NMI}
			&10\%	&0.564(0.036)	&0.521(0.034)	&\textbf{0.574(0.037)}	&0.546(0.033)	&0.514(0.027)	&0.552(0.036)\\  
			&&20\%	&\textbf{0.567(0.036)}	&0.527(0.031)	&0.553(0.025)	&0.555(0.028)	&0.530(0.036)	&0.544(0.029)\\  
			&&30\%	&\textbf{0.565(0.037)}	&0.522(0.028)	&0.553(0.027)	&0.554(0.032)	&0.529(0.029)	&0.538(0.037)\\  
			&&40\%	&\textbf{0.570(0.037)}	&0.511(0.032)	&0.554(0.019)	&0.548(0.030)	&0.506(0.027)	&0.543(0.041)\\  
			\midrule[0.75pt]

			\multirow{13}{*}{Mnist} 
			& \multirow{5}{*}{ACC}
			&10\%	&0.280(0.035)	&0.313(0.011)	&0.284(0.018)	&0.327(0.048)	&\textbf{0.534(0.033)}	&0.532(0.021)\\  
			&&20\%	&0.278(0.024)	&0.320(0.015)	&0.277(0.019)	&0.453(0.040)	&0.549(0.031)	&\textbf{0.554(0.028)}\\  
			&&30\%	&0.274(0.021)	&0.332(0.011)	&0.295(0.025)	&0.405(0.064)	&0.512(0.018)	&\textbf{0.561(0.032)}\\  
			&&40\%	&0.288(0.021)	&0.320(0.012)	&0.289(0.013)	&0.466(0.061)	&0.538(0.045)	&\textbf{0.563(0.027)}\\  
			\cline{2-9}
			
			& \multirow{5}{*}{F-score}
			&10\%	&0.228(0.005)	&0.227(0.010)	&0.230(0.006)	&0.249(0.033)	&\textbf{0.405(0.019)}	&0.398(0.020)\\  
			&&20\%	&0.231(0.003)	&0.227(0.007)	&0.231(0.006)	&0.328(0.028)	&0.418(0.025)	&\textbf{0.419(0.023)}\\  
			&&30\%	&0.229(0.004)	&0.232(0.007)	&0.229(0.003)	&0.305(0.048)	&0.388(0.010)	&\textbf{0.419(0.025)}\\  
			&&40\%	&0.231(0.004)	&0.230(0.006)	&0.228(0.004)	&0.349(0.041)	&0.410(0.032)	&\textbf{0.430(0.023)}\\  
			\cline{2-9}
			
			& \multirow{5}{*}{NMI}
			&10\%	&0.206(0.027)	&0.241(0.009)	&0.210(0.014)	&0.249(0.046)	&\textbf{0.452(0.015)}	&0.447(0.017)\\  
			&&20\%	&0.204(0.021)	&0.247(0.006)	&0.203(0.018)	&0.352(0.037)	&0.462(0.022)	&\textbf{0.464(0.019)}\\  
			&&30\%	&0.199(0.017)	&0.254(0.008)	&0.214(0.020)	&0.324(0.062)	&0.438(0.006)	&\textbf{0.461(0.018)}\\  
			&&40\%	&0.213(0.017)	&0.248(0.007)	&0.213(0.011)	&0.374(0.052)	&0.455(0.028)	&\textbf{0.475(0.019)}\\  
			\midrule[0.75pt]
			\bottomrule[1.5pt]
		\end{tabular}
	\end{adjustbox}
\end{table*}

\begin{itemize}
	\item[$\bullet$] 
	\textbf{Dermatology}\footnote{https://archive.ics.uci.edu/ml/datasets/dermatology} is a disease dataset on erythemato-squamous, consisting of 366 instances distributed across 6 classes. The Dermatology dataset's 11 comprehensive clinical attributes are utilized as the previous features, while 22 histopathological attributes are employed as the current features. 
	\item[$\bullet$] 
	\textbf{Caltech101}\footnote{http://www.vision.caltech.edu/Image Datasets/Caltech101/} is an object recognition dataset consisting of 8677 images across 101 categories. A downsized version of this dataset is commonly employed in experiments. In our study, we concentrate on 7 widely utilized categories \cite{dueck2007non}: Dolla-Bill, Faces, Motorbikes, Stop-Sign, Garfield, Windsor-Chair, and Snoopy. Then, a smaller dataset is generated, containing a total of 441 images across these 7 classes, referred to as Caltech101-7. Each image in this dataset is characterized by 6 distinct types of extracted features. We take one type of feature with 48 attributes as the previous features, while another type with 411 attributes serves as the current features.

	\item[$\bullet$] 
	
	\textbf{Indoor}\footnote{http://web.mit.edu/torralba/www/indoor.html} is a scene recognition dataset containing 67 indoor categories. As reported in \cite{tao2018multiview}, 5 specific categories (auditorium, elevator, classroom, buffet, and cloister) have been selected for analysis. The newly generated dataset consists of 621 instances. Two visual features, namely 32 WT and 512 GIST, are utilized for the previous and current features, respectively.
	
	\item[$\bullet$] 
	\textbf{Event}\footnote{vision.stanford.edu/lijiali/event dataset/} is a sports event dataset
	containing 1579 images categorized into 8 distinct categories. We set 500 SIFT \cite{SIFT} as the previous features and 512 GIST as the current features.
	
	\item[$\bullet$]   
	\textbf{Handwritten}\footnote{https://archive.ics.uci.edu/ml/datasets/Multiple+Features} contains 2000 data points distributed among ten-digit classes. Two types of features, namely 76 Fourier coefficients of the character shapes (FOU) and 216 profile correlations (FAC), are employed as the previous and current features, respectively.
	
	\item[$\bullet$] 
	\textbf{Scene8}\footnote{https://figshare.com/articles/15-Scene\_Image\_Dataset/7007177} is composed of 2688 images categorized into 8 outdoor scene classes. We utilize two types of features: 512 GIST and 200 SURF features \cite{SURF}, designated as the previous and current features, respectively.
	
	\item[$\bullet$] 
	\textbf{Wiki}\footnote{http://www.svcl.ucsd.edu/projects/crossmodal/} is an image-text dataset comprising 2,866 samples distributed among 10 categories. These samples were sourced from articles acknowledged as featured content on Wikipedia. Two types of features are set as the previous and current features, respectively.
	
	\item[$\bullet$] 
	\textbf{KSA}\footnote{http://www.cs.cmu.edu/ kevinma/} is a Kinect Skeleton Action dataset, comprising 20,000 data points captured from four subjects performing five actions. Each action is treated as a distinct category, with 5000 video frames recorded for each subject. Two pose features, namely 360 $f_{JJ\_d}$ and 45 $f_{J\_c}$, are used for the previous and current features, respectively.

	\item[$\bullet$] 
	\textbf{Mnist}\footnote{https://www.csie.ntu.edu.tw/cjlin/libsvmtools/datasets
		/multiclass.html} is a dataset of handwritten digits, consisting of 810,000 images across 10 classes. In this study, we randomly select 10,000 data points from each class to create a subset containing 100,000 data points. For the previous features, we utilize 256 LBP features, while 32 WT features are employed as the current features.
\end{itemize}

\subsubsection{Experimental Setup}

We evaluate the proposed incremental algorithms with theoretical guarantees (\textbf{FIC-FT/DR/DA/MR}, introduced in Section\,\ref{Feature Incremental Clustering Section}). In practical scenarios, adding new features generally improves model performance if they contain substantial predictive signals and are not affected by noise or redundancy. To investigate the impact of integrating new features , we compare them against two baselines in an ablation study: \textbf{KM-P1} (using previous data $\mathbf{X}_1^{(1)}$) and \textbf{KM-C1} (using current feature subset $\mathbf{X}_2^{(1)}$).

Each dataset is randomly partitioned into three parts: 50\% for previous-stage training, 20\% for testing, and the remainder for the current stage. To evaluate the impact of data volume, we subsample the current-stage training set using various ratios (RA), specifically setting RA to 10\%, 20\%, 30\%, and 40\%.
For all compared algorithms, we only need to tune a single parameter $\theta$ in \textbf{FIC-MR}, searching over the grid $\{10^0, 10^1, 10^2, 10^3\}$. Experimental results are reported as the mean (standard deviation) over 10 independent runs, evaluated by Accuracy (ACC), F-score, and Normalized Mutual Information (NMI).
\begin{figure*}[tbhp]
	\begin{center}	
		\subfigure[Dermatology]{
			\includegraphics[width=5.5cm,height=4.3cm]{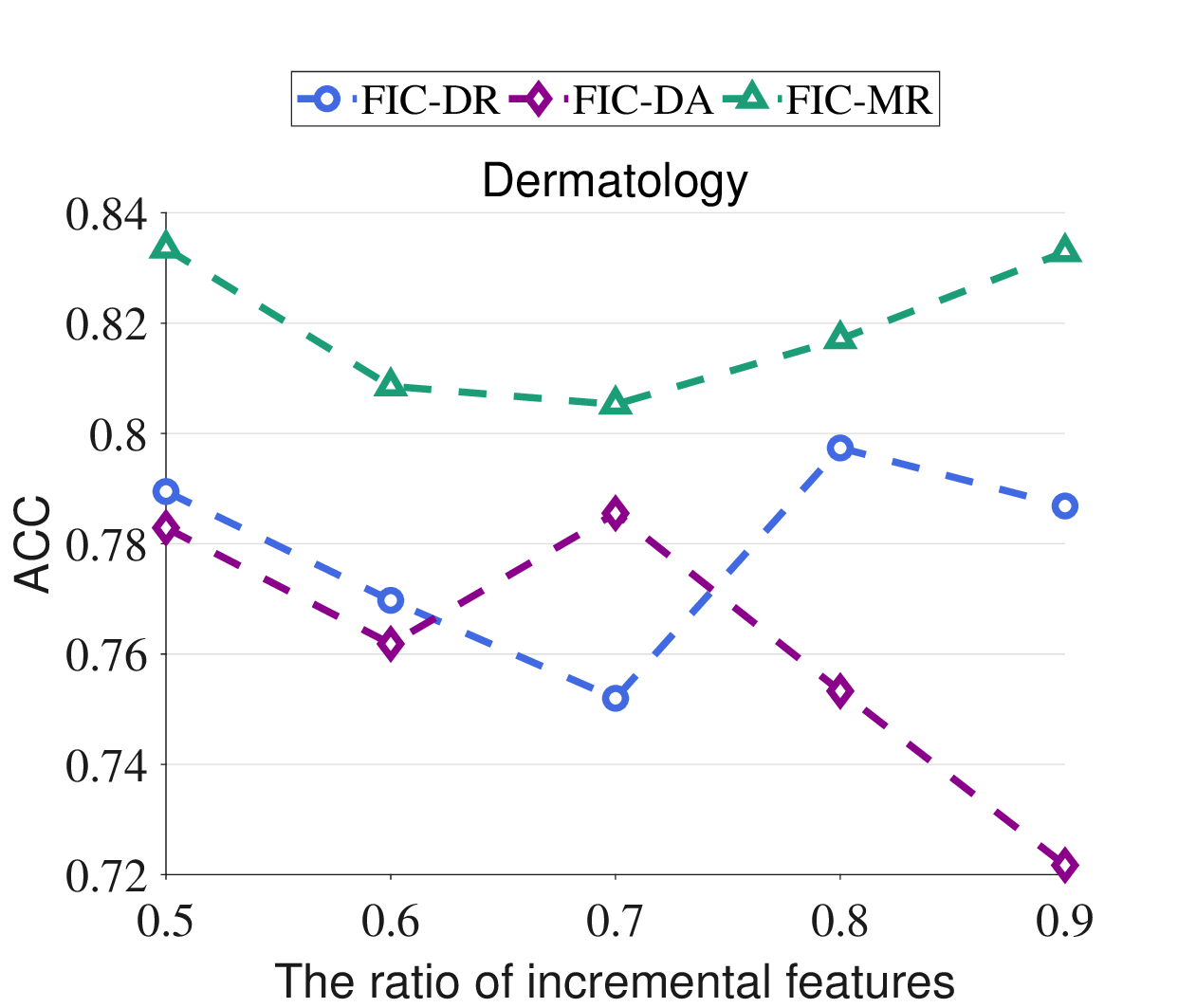}}
		\subfigure[Caltech101-7]{
			\includegraphics[width=5.5cm,height=4.3cm]{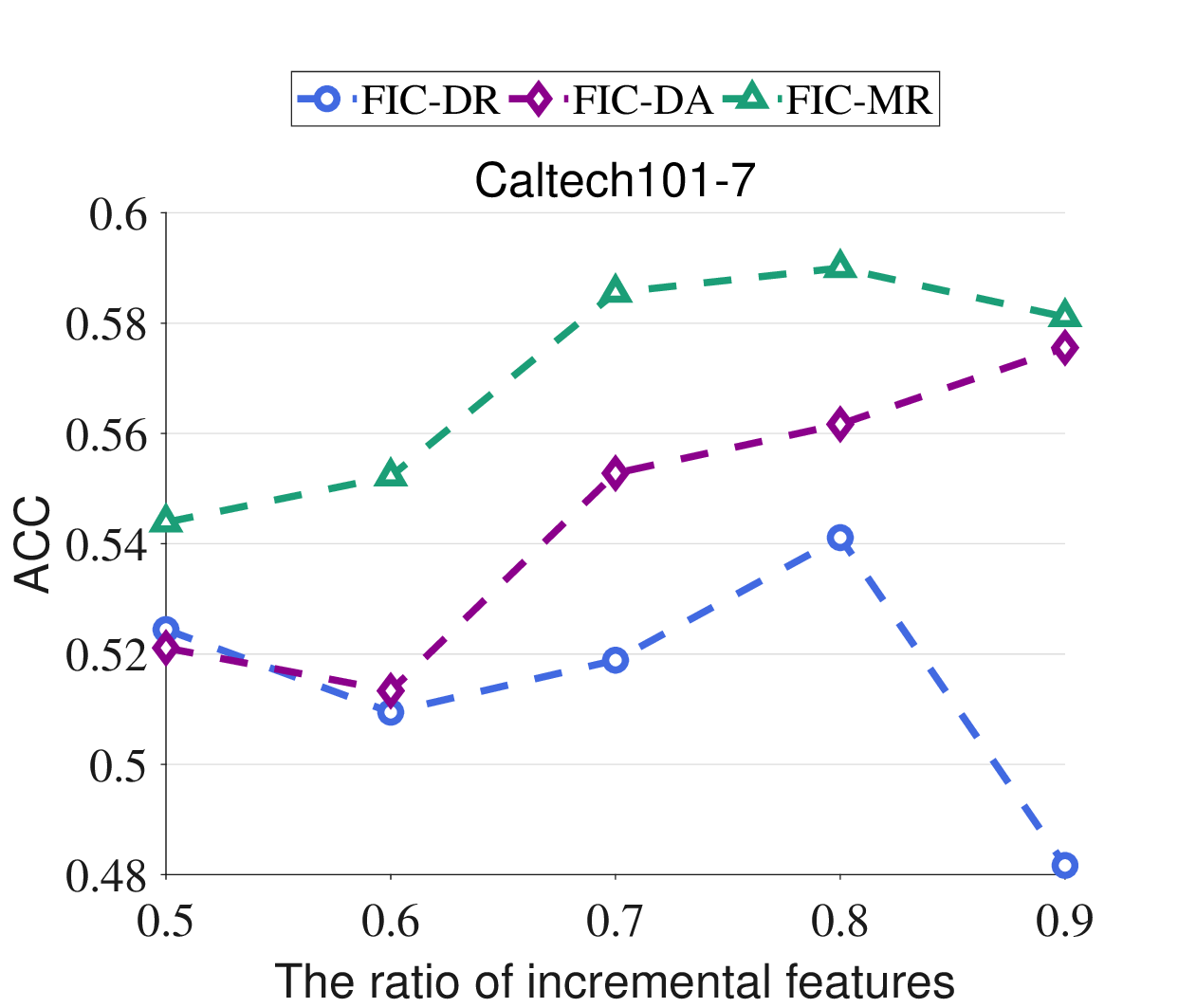}}
		\label{}
		\subfigure[Event]{
			\includegraphics[width=5.5cm,height=4.3cm]{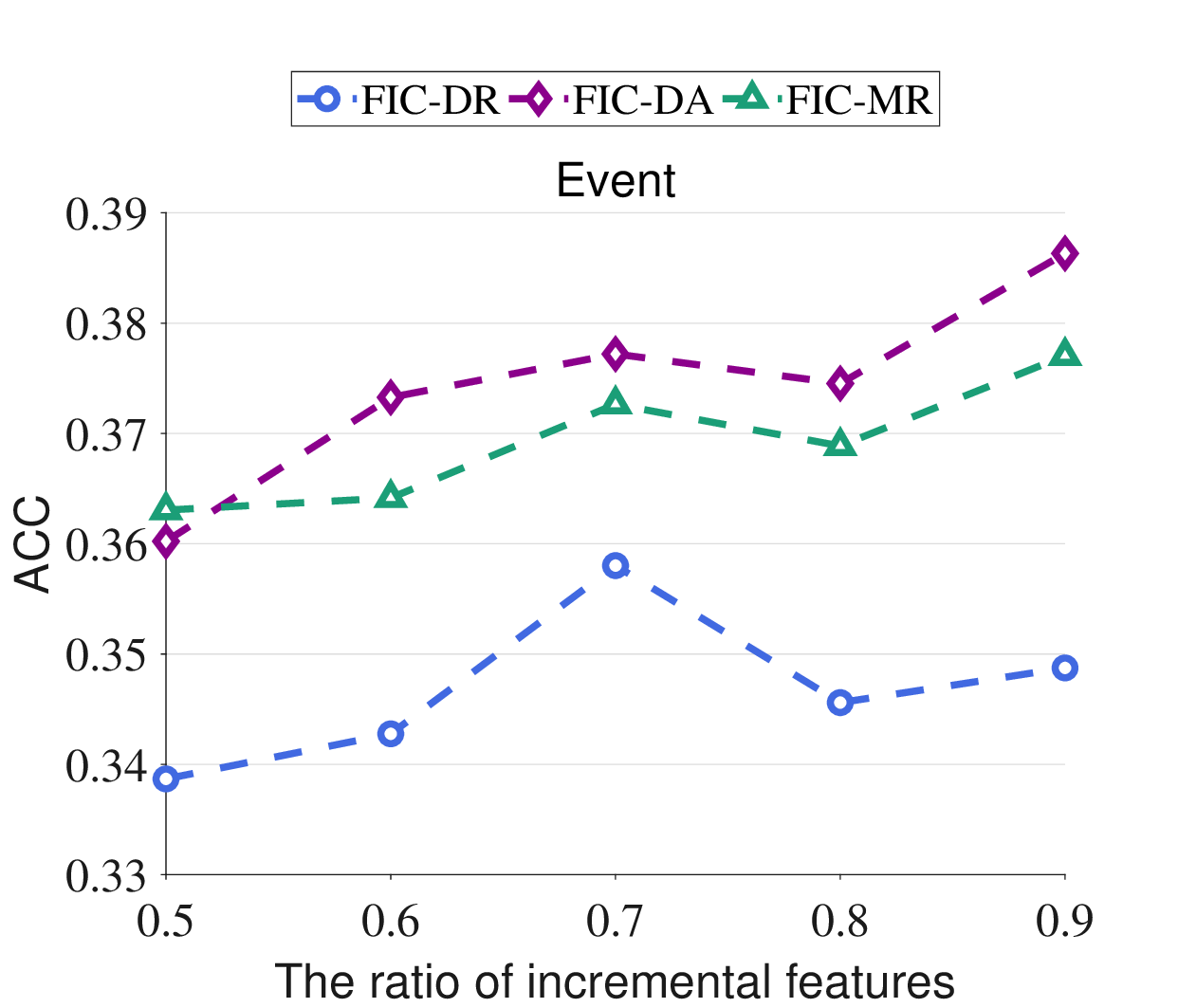}}
		\label{}
		
		\subfigure[Handwritten]{
			\includegraphics[width=5.5cm,height=4.3cm]{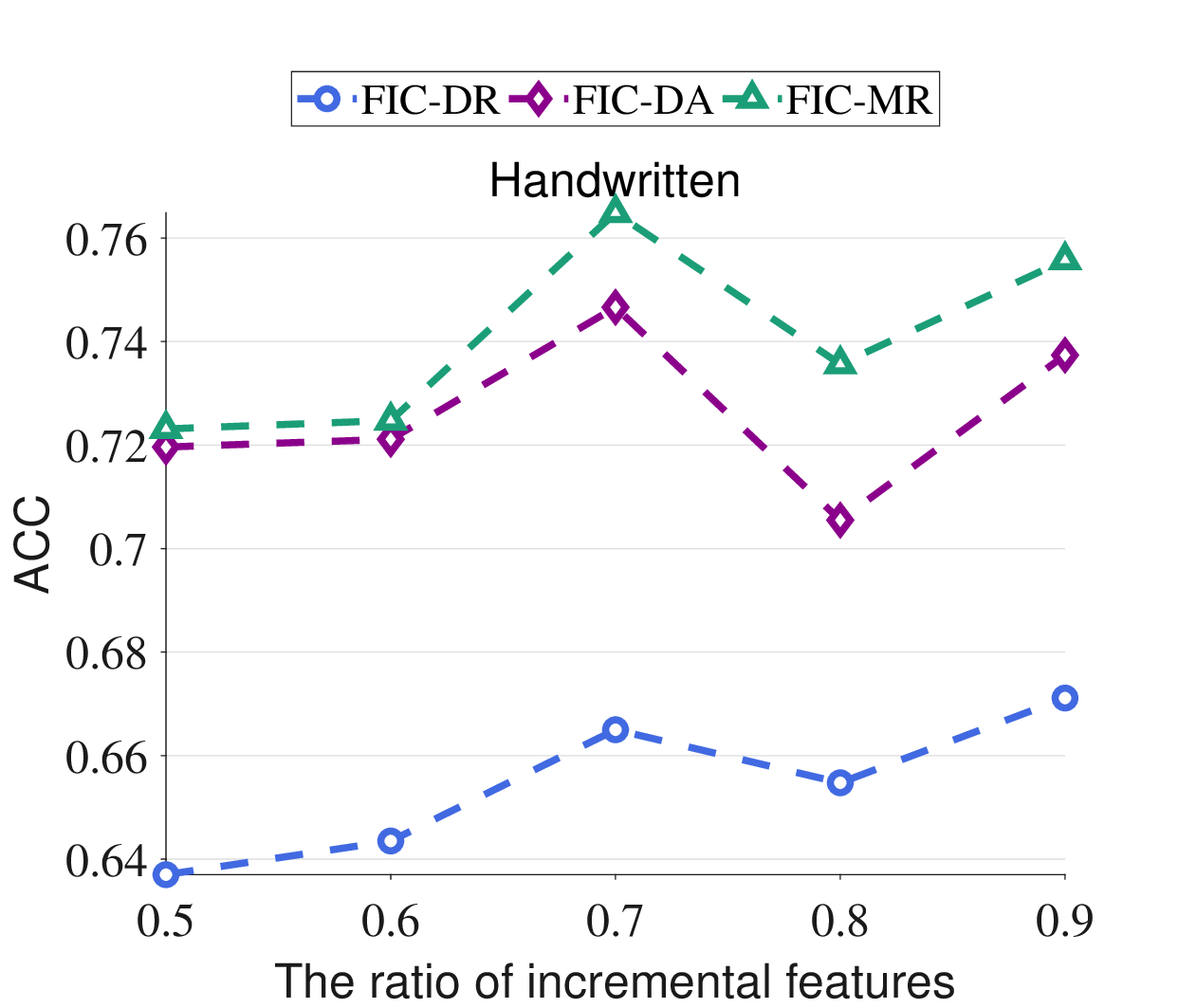}}
		\label{}
		\subfigure[Wiki]{
			\includegraphics[width=5.5cm,height=4.3cm]{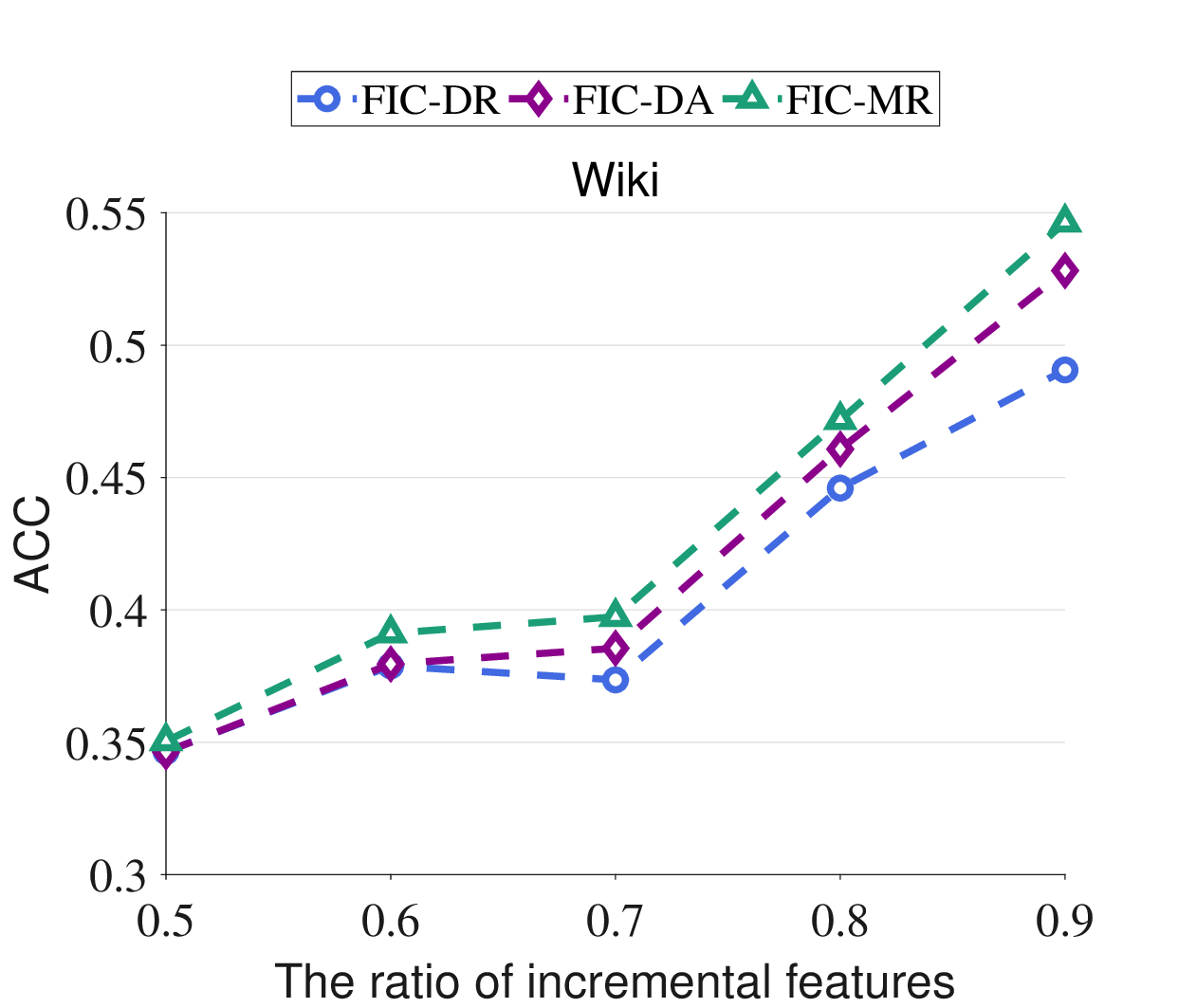}}
		\label{}
		\subfigure[Mnist]{
			\includegraphics[width=5.5cm,height=4.3cm]{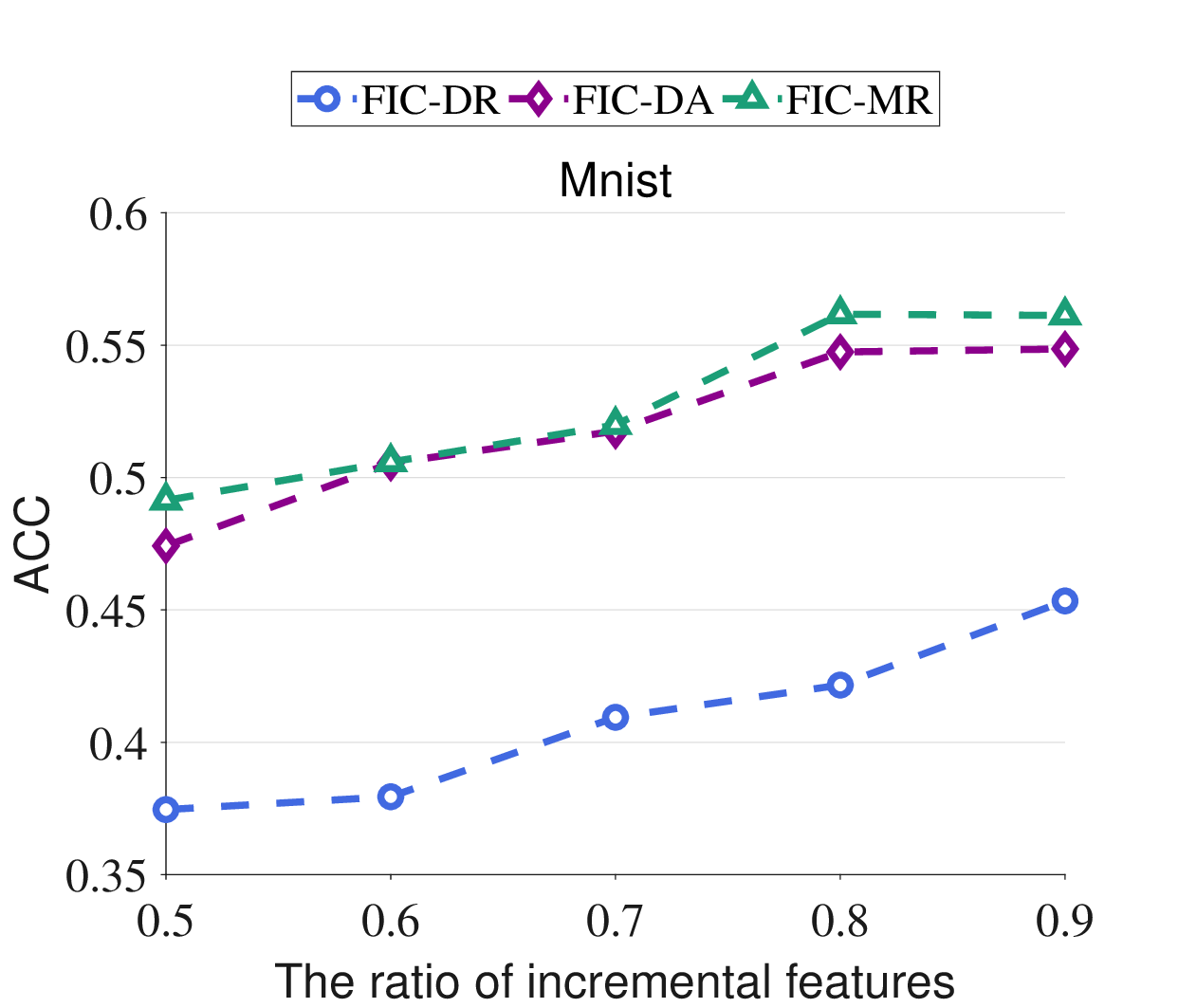}}
		\label{}
	\end{center}
	\vskip -0.2 in
	\caption{The influence of incremental features in the current stage with FIC-DR, FIC-DA, and FIC-MR.}
	\label{incremental feature change}
\end{figure*}

\begin{figure*}[tbhp]
	\begin{center}	
		\subfigure[Caltech101-7]{
			\includegraphics[width=5.8cm,height=4cm]{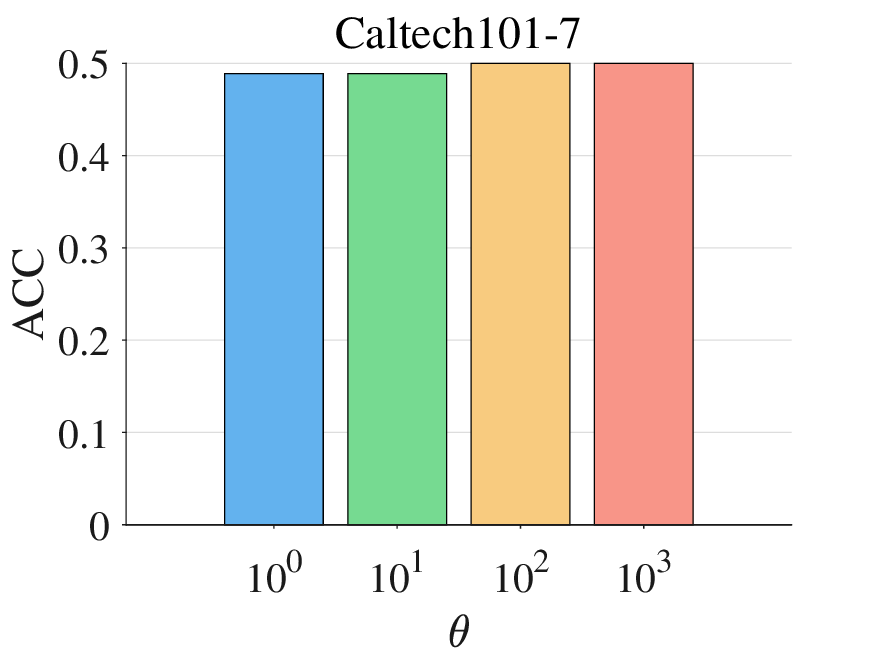}}
		\label{}
		\subfigure[Indoor]{
			\includegraphics[width=5.8cm,height=4cm]{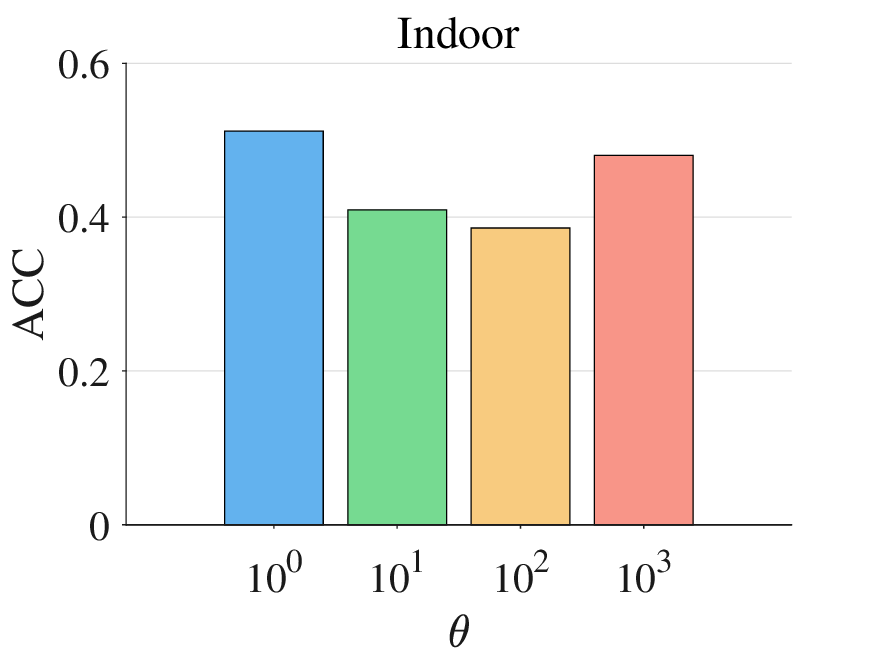}}
		\label{}
		\subfigure[Event]{
			\includegraphics[width=5.8cm,height=4cm]{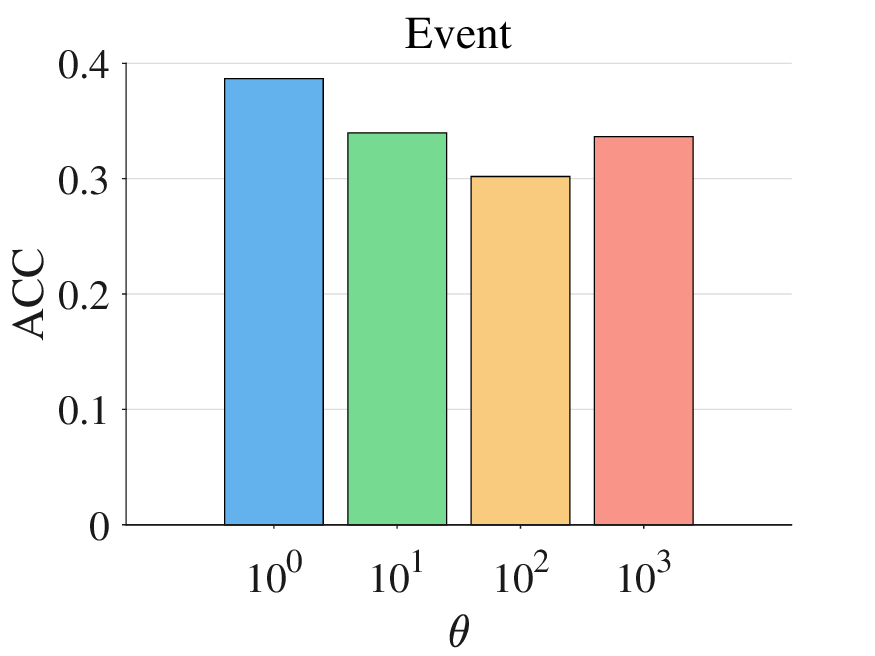}}
		\label{}
	\end{center}
	\vskip -0.15 in
	\caption{Parameter sensitivity of FIC-MR on ACC results on six datasets.}
	\label{parameterfigure}
\end{figure*}
\begin{figure*}[tbhp]
	\begin{center}	
		\subfigure[]{
			\includegraphics[width=5.8cm,height=5cm]{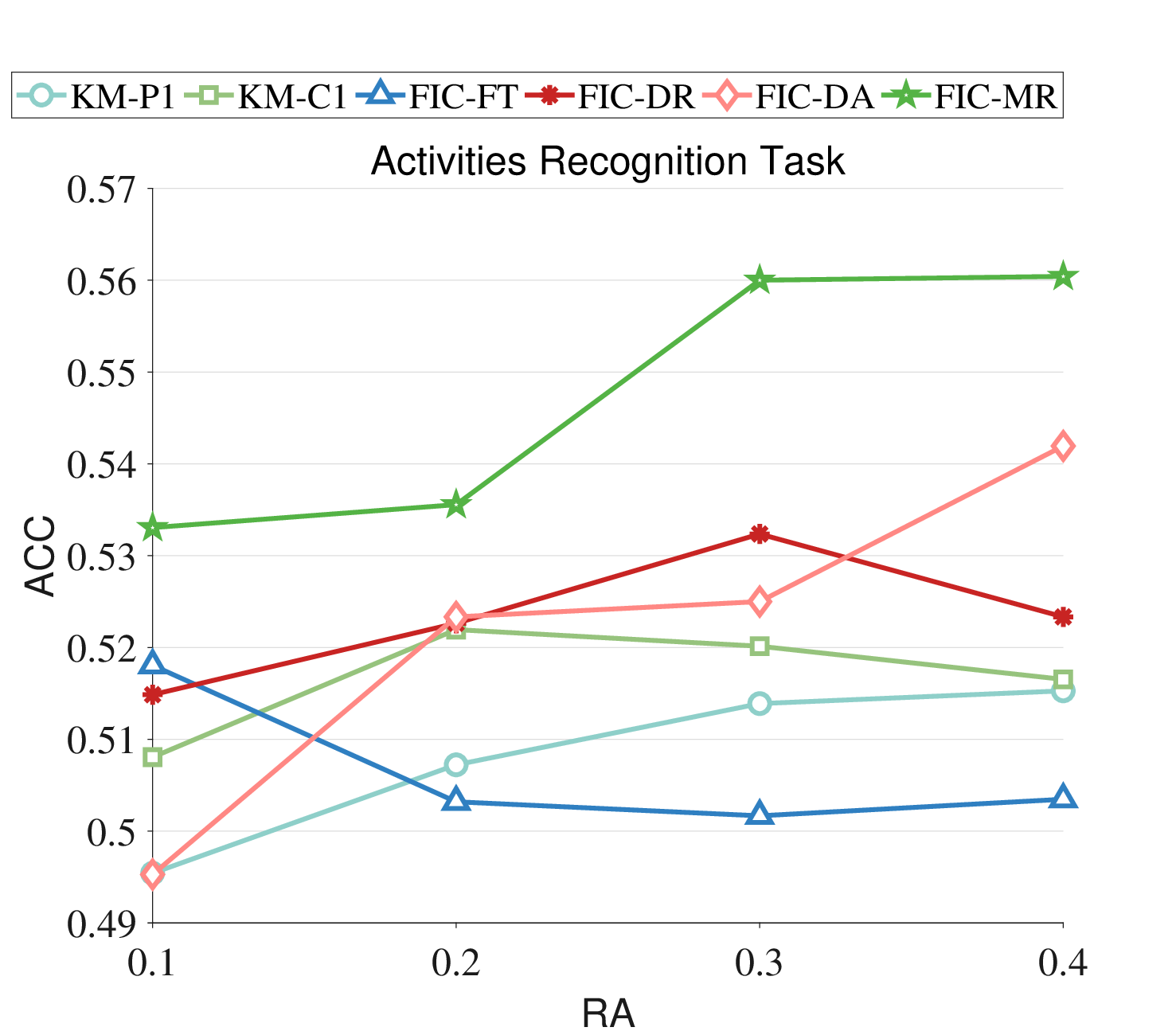}}
		\subfigure[]{
			\includegraphics[width=5.8cm,height=5cm]{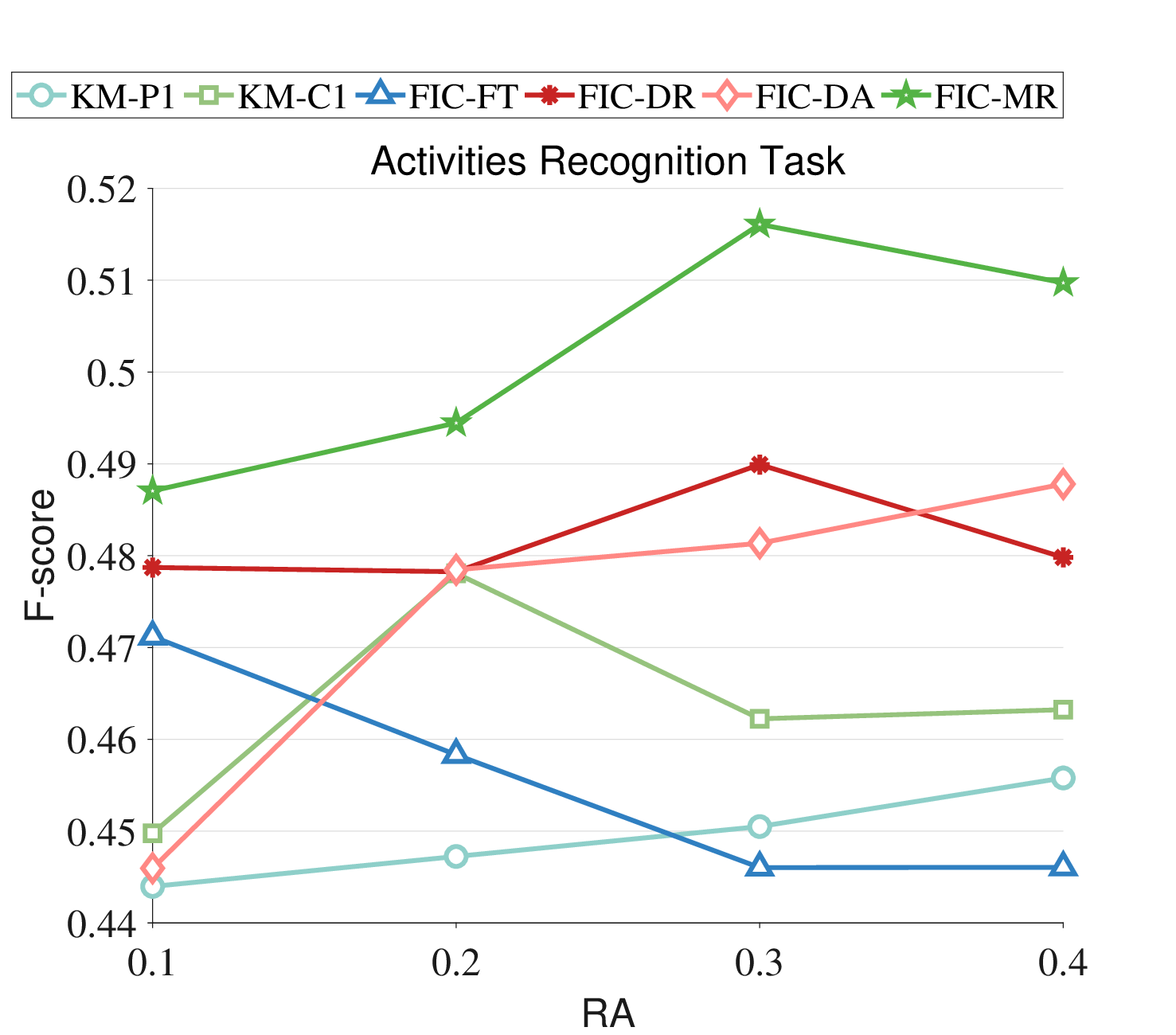}}
		\label{}
		\subfigure[]{
			\includegraphics[width=5.8cm,height=5cm]{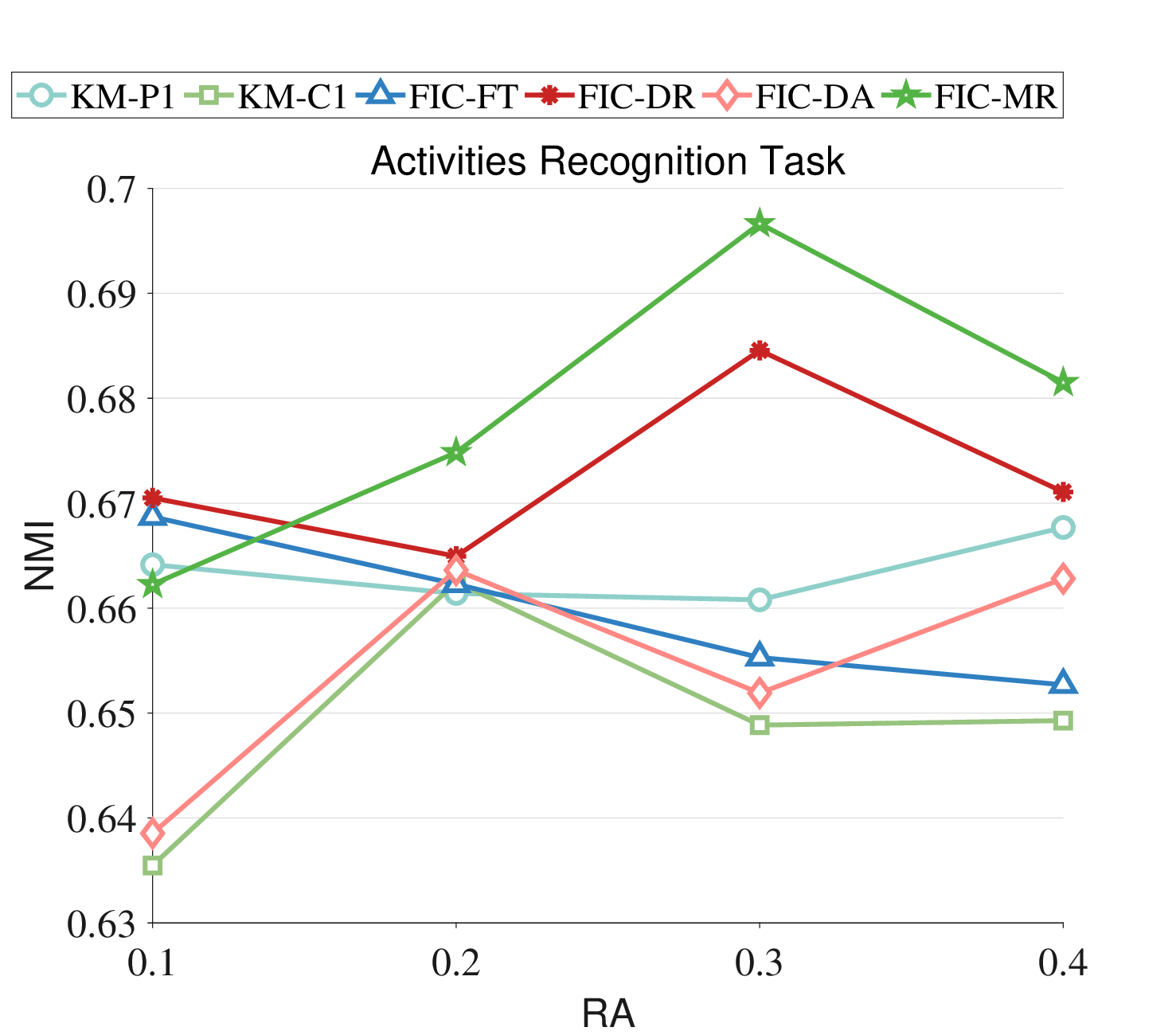}}
		\label{}
	\end{center}
	\vskip -0.2 in
	\caption{ The ACC, F-score, and NMI results comparison in the activity recognition clustering task.}
	\label{application}
\end{figure*}
\subsection{Comparison Results}
Tables \ref{one table} and \ref{two table} summarize the performance of all algorithms across nine real-world datasets, leading to the following observations:

\begin{itemize}
\item 	With access to previous-stage data, FIC-FT, KM-P1, and KM-C1 use only old features, while FIC-DA integrates both sets.  FIC-DA consistently outperforms these baselines in most cases. This suggests that leveraging a single feature type is often insufficient. 
Notably, strategic use of incremental features can improve clustering performance even without previous-stage data, despite limited current-stage availability.

\item FIC-DR completes new features from previous-stage data based on current-stage data, then merges them with the current data for model training. However, its performance is often unstable and generally inferior to FIC-MR. This instability likely stems from the lack of prior distribution information, compromising the reconstruction function's ability to minimize the discrepancy between reconstructed and observed data. Besides, FIC-DR performs well on datasets like Dermatology and Caltech101-7, where this distribution discrepancy is relatively small. These empirical findings align with the generalization theory for FIC-DR, validating that minimizing distribution divergence is crucial for enhancing generalization capability.

\item Despite neither accessing previous-stage data, FIC-MR outperforms FIC-DA by reusing prior cluster centers rather than relying only on current-stage data. This demonstrates that effectively reusing prior knowledge can enhance performance, aligning with our generalization theory for FIC-MR.

\item FIC-MR consistently outperforms all compared algorithms in most scenarios. For instance, on the Indoor dataset with RA=10\%, it achieves over a $5\%$ ACC improvement over the best baseline. This gain stems from the effective inheritance of knowledge from the previous stage.

Results show performance generally improves with RA as more current-stage data becomes available. Notably,  the advantages of FIC-MR are more pronounced at smaller RAs, highlighting the effectiveness of model reuse in feature-incremental scenarios.

\end{itemize}

\subsection{Influence of Incremental Features}
We vary the proportion of new features in the current stage from 0.5 to 0.9. For the incremental feature-based algorithms (FIC-DR, FIC-DA, and FIC-MR), we investigate how performance varies with the number of incremental features used.
Fig.\,\ref{incremental feature change} illustrates the experimental results across six datasets, due to space constraints.
 Generally, FIC-DA and FIC-MR performance improve with incremental features, despite minor fluctuations in parameter sensitivity or feature interactions in some cases.  Furthermore, FIC-DR’s performance declines on certain datasets (e.g., Caltech101-7 and Event) as new feature increase. This degradation likely stems from the difficulty of accurately reconstructing larger feature sets, causing greater distribution deviations. Such instability in FIC-DR aligns with both intuitive expectations and our generalization theory.

\subsection{Parameter Sensitivity}

Among the algorithms, only FIC-MR requires tuning a single parameter within a small range. We evaluate its performance impact by fixing the current-stage RA at 10\% and using all incremental features.

Due to space constraints, we show the experimental results across three datasets in Fig.\,\ref{parameterfigure}. Across the tested range $\{10^0, 10^1, 10^2, 10^3\}$,  a suitable parameter can always yield good performance, with no further refinement needed.

\subsection{ Application to Activity Recognition}
We evaluate our four algorithms on the activity recognition task \cite{banos2014dealing} using the RealDisp dataset \footnote{http://archive.ics.uci.edu/ml/datasets/REALDISP+Activity+Recognition+\\ Dataset}. This setup follows the motivating example in Fig.\,\ref{scenariofig}, where new features become available during the current stage. Specifically, people usually perform warm-up exercises before engaging in formal physical activities. During the warm-up phase (previous stage), 5 sensors collect 65-dimensional data. In the formal exercise phase (current stage), 4 additional sensors are introduced to enhance monitoring, expanding the feature space to 117 dimensions. 
The goal is to cluster these observations into 12 activity categories. By simulating this two-stage evolution of feature space. This setup provides a realistic testbed for evaluating the adaptability of our algorithms in dynamic, sensor-based environments.

We adopt the earlier data split in our main experiments (50\% previous, 20\% test, 30\% current) and test various current-stage data volumes by setting the RA across 10\%--40\%.
The results, illustrated in Fig.\,\ref{application}, lead to several key observations in the activity recognition clustering task:
\begin{itemize}
\item FIC-DA generally outperforms FIC-FT, suggesting that incorporating both new and old features from the current stage is more effective than combining cross-stage old features, provided the new features contribute positively.

\item FIC-DR exhibits unstable and inferior performance compared to FIC-MR. This degradation likely stems from noise and distribution mismatch during feature reconstruction, empirically validating our theoretical stability analysis of FIC-DR. FIC-MR performs well in most cases, corroborating our findings on other datasets.

\end{itemize}
Overall, these findings validate the practical utility of our framework for real-world feature incremental clustering. Beyond activity recognition, the proposed algorithms can also be applied to various domains. For example, in ecosystem sensor networks, initial sensors collect baseline environmental features, while newly deployed sensors provide new features. By aligning with these feature incremental settings, our framework offers a solution for diverse real-world clustering tasks.

\section{Conclusion}
\label{ConclusionSection}
In this paper, we have initially addressed the feature incremental clustering learning problem, which is important yet rarely studied. 
We first propose four clustering algorithms tailored for incremental feature scenarios. Next, to further understand these algorithms, we establish generalization error bounds for all four algorithms. This theoretical insight not only guides the development of new algorithms but also reveals key factors influencing their performance through a comparative analysis.
Finally, extensive experiments show the effectiveness of our proposed algorithms.

Future improvements could integrate deep clustering base models to improve performance. Furthermore, since the current framework assumes informative features, robustness against redundancy and high variance could be improved by: (i) filtering redundant inputs during pre-processing \cite{yang2024say}; (ii) incorporating latent variance control to enhance robustness against noisy or high-variance features \cite{mavroeidis2014feature}.

\section{Proofs}
\label{proofs}

\subsection{proof of Theorem \ref{k-means  generalization error bound}}
\label{k-means  generalization error bound proof}
In proving the validity of Theorem \ref{k-means  generalization error bound}, we propose several lemmas.
\begin{lemma}
	\label{Rademacher complexity0}
	Consider any sample set $D=\left\{\mathbf{x}_{1}, \ldots, \mathbf{x}_{n}\right\}$ drawn from the input space $\mathcal{X}$.
	Assuming that $\lVert \mathbf{x}\rVert \leq \gamma$ for $ \forall\mathbf{x}\in \mathcal{X}$,  there exists a positive constant $V$, such that
	\begin{equation}
		\mathfrak{R}_{n}(\mathcal{J}_{\mathbf{U}}) \leq 4\gamma^2V\sqrt {\frac{k}{n}}\log ^{2}(2n\sqrt{2\gamma n}),
	\end{equation}
\end{lemma}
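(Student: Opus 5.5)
The plan is to reduce the complexity of the ``minimum'' class $\mathcal{J}_{\mathbf{U}}$ to the complexities of the $k$ coordinate classes $\mathcal{G}_{\mathbf{u}_s}=\{\mathbf{x}\mapsto \lVert \mathbf{x}-\mathbf{u}_s\rVert^2\}$. The tool is a vector-contraction inequality for the $\ell_\infty$ norm, in the style of Foster and Rakhlin. The $\sqrt{k}\,\log^2(\cdot)$ shape of the stated bound is exactly what such an inequality produces. The key observation is that $\psi(\boldsymbol{\alpha})=\min_s\alpha_s$ is $1$-Lipschitz with respect to $\lVert\cdot\rVert_\infty$, since $\lvert\min_s\alpha_s-\min_s\alpha'_s\rvert\le\max_s\lvert\alpha_s-\alpha'_s\rvert$.

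\textbf{Step 1 (restrict the centers).} I first argue that we may assume $\lVert\mathbf{u}_s\rVert\le\gamma$. Since $\lVert\mathbf{x}\rVert\le\gamma$, projecting any center onto the ball of radius $\gamma$ does not increase any distance $\lVert\mathbf{x}-\mathbf{u}_s\rVert$, so the relevant minimizers lie in that ball. With this restriction every $g_{\mathbf{u}_s}$ takes values in $[0,4\gamma^2]$.

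\textbf{Step 2 (per-coordinate Rademacher bound).} Expand $\lVert\mathbf{x}-\mathbf{u}\rVert^2=\lVert\mathbf{x}\rVert^2-2\langle\mathbf{x},\mathbf{u}\rangle+\lVert\mathbf{u}\rVert^2$ and bound the three pieces separately.
\begin{itemize}
\item The term $\lVert\mathbf{x}\rVert^2$ does not depend on $\mathbf{u}$; Jensen's inequality gives a contribution of at most $\gamma^2/\sqrt{n}$.
\item The linear term is handled by Cauchy--Schwarz in $\mathcal{H}$: $\mathbb{E}\sup_{\lVert\mathbf{u}\rVert\le\gamma}\lvert\frac1n\sum_i\sigma_i\langle\mathbf{x}_i,\mathbf{u}\rangle\rvert\le\gamma\cdot\gamma/\sqrt{n}$, contributing at most $2\gamma^2/\sqrt{n}$.
\item The constant term $\lVert\mathbf{u}\rVert^2$ contributes at most $\gamma^2\,\mathbb{E}\lvert\frac1n\sum_i\sigma_i\rvert\le\gamma^2/\sqrt{n}$.
\end{itemize}
Summing, $\max_s\mathfrak{R}_n(\mathcal{G}_{\mathbf{u}_s})\le 4\gamma^2/\sqrt{n}$.

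\textbf{Step 3 (vector contraction).} I apply the $\ell_\infty$ contraction lemma to $\psi\circ g_{\mathbf{U}}$. It bounds $\mathfrak{R}_n(\mathcal{J}_{\mathbf{U}})$ by $C\,L\sqrt{k}\,\max_s\mathfrak{R}_n(\mathcal{G}_{\mathbf{u}_s})$ times a polylogarithmic factor, here with $L=1$. The polylog factor has the form $\log^{3/2+\epsilon}(\cdot)$, which I bound crudely by $\log^2(\cdot)$.

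\textbf{Main obstacle.} The delicate part is controlling the argument of that logarithm. In the contraction lemma it comes from the chaining/fat-shattering argument and has the form $\frac{B n}{\max_s\mathfrak{R}_n(\mathcal{G}_{\mathbf{u}_s})}$, where $B$ bounds the range of the functions. To remove the dependence on the Rademacher complexity in the denominator, I will use a Khintchine-type lower bound $\max_s\mathfrak{R}_n(\mathcal{G}_{\mathbf{u}_s})\gtrsim \sup_f\sqrt{\frac1n\sum_i f(\mathbf{x}_i)^2}\,/\sqrt{n}$. Rescaling the class by its range should then turn the logarithm's argument into a quantity of order $n\sqrt{\gamma n}$, matching $\log^2(2n\sqrt{2\gamma n})$. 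Checking the exact scale-sensitive condition of the lemma is the step I would verify carefully.

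\textbf{Conclusion.} Collecting all absolute constants into $V$ and combining Steps 2 and 3 yields
\[
\mathfrak{R}_n(\mathcal{J}_{\mathbf{U}})\le 4\gamma^2V\sqrt{k/n}\,\log^2\!\bigl(2n\sqrt{2\gamma n}\bigr).
\]
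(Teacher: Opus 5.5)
Your plan follows the paper's proof. The ingredients match: $\psi$ is $1$-Lipschitz in $\ell_\infty$; the Foster--Rakhlin contraction is applied with $L=1$; the same three-term expansion gives $\max_s\widetilde{\mathfrak{R}}_n(\mathcal{G}_{\mathbf{u}_s})\le 4\gamma^2/\sqrt{n}$; and a Khintchine lower bound controls the logarithm. In Step 3, take the exponent $3/2+1/2$ directly, as the paper does with $r=1/2$, rather than comparing powers of a logarithm that may be below $1$. In Step 1, the projection argument only tells you where minimizers lie. The lemma bounds a supremum over the whole class, and that supremum is infinite over $\mathcal{H}^k$. So, like the paper, you must restrict the class to $\lVert\mathbf{u}_s\rVert\le\gamma$ outright. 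The projection argument is what later shows this restriction costs nothing for the empirical risk minimizer.

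The step you flagged will not come out as you predict, and it cannot. The ratio $Bn/\max_s\widetilde{\mathfrak{R}}_n$ is dimensionless: rescaling $\mathbf{x}$ and $\mathbf{u}$ by $c$ multiplies both $B$ and the complexity by $c^2$. Hence it cannot produce a factor $\sqrt{\gamma}$.

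Concretely, use your correct range $B=4\gamma^2$ and place all $n$ points at one $\mathbf{x}\in\mathcal{X}$. Khintchine then gives $\widetilde{\mathfrak{R}}_n(\mathcal{G}_{\mathbf{u}_s})\ge\frac{1}{\sqrt{2n}}\sup_{\lVert\mathbf{u}\rVert\le\gamma}\lVert\mathbf{x}-\mathbf{u}\rVert^2\ge\frac{\gamma^2}{\sqrt{2n}}$. Note that what you need here is a \emph{lower} bound on $\sup\lvert g\rvert$. The log argument is therefore at most $4\sqrt{2}\,n^{3/2}$, and you obtain $\mathfrak{R}_n(\mathcal{J}_{\mathbf{U}})\le 4\gamma^2V\sqrt{k/n}\,\log^2(4\sqrt{2}\,n^{3/2})$.

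This implies the stated inequality when $\gamma\ge 1/2$, after enlarging $V$ by an absolute factor. It does not for small $\gamma$. At $\gamma=1/8$, $n=1$ the stated right-hand side is $4\gamma^2V\sqrt{k}\log^2 1=0$. Meanwhile $\mathfrak{R}_1(\mathcal{J}_{\mathbf{U}})=\sup_{\mathbf{U}}\min_s\lVert\mathbf{x}_1-\mathbf{u}_s\rVert^2\ge\gamma^2>0$.

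The paper reaches $2n\sqrt{2\gamma n}$ only through slips:
\begin{itemize}
\item It uses $b=4\gamma$, derived from $\lVert\mathbf{x}-\mathbf{u}_s\rVert^2\le 2(\lVert\mathbf{x}\rVert+\lVert\mathbf{u}_s\rVert)$; the correct bound is $4\gamma^2$.
\item It writes $\lvert g_{\mathbf{u}}(\mathbf{x})\rvert^{1/2}$ where Khintchine gives $\lvert g_{\mathbf{u}}(\mathbf{x})\rvert$.
\item It lower-bounds $\max_s\widetilde{\mathfrak{R}}_n$ using the upper bound $u\le 4\gamma$.
\end{itemize}
So do not try to force $\gamma$ into the logarithm. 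Prove the scale-free version instead; it gives the same $\sqrt{k/n}$ rate (up to logarithms) in the downstream generalization bound.
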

\noindent We provide the proof of Lemma \ref{Rademacher complexity0} in Section \ref{Rademacher complexity0proof}.

\begin{lemma} 
	\label{Generalization error 0}
	Denote $\mathcal{G}_{U}$ as the hypothesis set and let $D$ represent a sample set comprising $n$ instances generated i.i.d. from the input space $\mathcal{X}$. 
	Let $\mathcal{J}_{\mathbf{U}}$ be  the minimum family of the functions $\mathcal{G}_{\mathbf{U}}$. If $\lVert \mathbf{x}\rVert \leq \gamma$ for $ \forall\mathbf{x}\in \mathcal{X}$,
	then for $\forall g_{\mathbf{U}}\in \mathcal{G}_{
		\mathbf{U}}$ and for any $\delta>0$, with probability at least $1-\delta$, we have
	\begin{equation}
		\label{generalization error inequality}
		\begin{aligned}
			\mathcal{L}_{\mathbb{Q}}(\mathbf{U})
			\leq &\hat{\mathcal{L}}_{D}(\mathbf{U}) +2 \mathfrak{R}(\mathcal{J}_{\mathbf{U}})+\frac{6\gamma\log (1 / \delta)}{n} \\&+ 3 \sqrt{\frac{(\lambda+8  \mathfrak{R}(\mathcal{J}_{\mathbf{U}})) \gamma \log (1 / \delta)}{n}},
		\end{aligned}
	\end{equation}
	where $\lambda=\mathbb{E}_{\mathbf{x}\sim \mathbb{Q}} [\sup _{g_{\mathbf{U}}\in\mathcal{G}_{\mathbf{U}}}\hat{\mathcal{L}}_{D}(\mathbf{U}) ]$. 
\end{lemma}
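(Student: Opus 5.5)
We will prove Lemma \ref{Generalization error 0} as a direct instance of a Talagrand/Bousquet-type concentration inequality for suprema of empirical processes, combined with symmetrization. The first step is to control the range and variance of the loss class. Since $\lVert\mathbf{x}\rVert\le\gamma$ and the centers of interest lie in the ball of radius $\gamma$ (an optimal center is a convex combination of data, so one may restrict $\mathbf{U}$ there without changing the minimum), each $h_{\mathbf{U}}\in\mathcal{J}_{\mathbf{U}}$ takes values in $[0,4\gamma^2]$. After rescaling the class by a constant (absorbed as in the statement), this gives a uniform bound of order $\gamma$ on $h_{\mathbf{U}}$. For the variance, nonnegativity gives $\mathrm{Var}(h_{\mathbf{U}}(\mathbf{x}))\le \mathbb{E}[h_{\mathbf{U}}^2]\le \gamma\,\mathbb{E}[h_{\mathbf{U}}(\mathbf{x})]$. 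Hence the weak variance $\sup_{\mathbf{U}}\mathbb{E} h_{\mathbf{U}}^2$ is bounded by $\gamma$ times $\lambda=\mathbb{E}[\sup_{g_{\mathbf{U}}}\hat{\mathcal{L}}_D(\mathbf{U})]$, since $\sup_{\mathbf{U}}\mathcal{L}_{\mathbb{Q}}(\mathbf{U})\le \mathbb{E}\sup_{\mathbf{U}}\hat{\mathcal{L}}_D(\mathbf{U})$ by Jensen.

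Next we define $Z=\sup_{\mathbf{U}}\big(\mathcal{L}_{\mathbb{Q}}(\mathbf{U})-\hat{\mathcal{L}}_D(\mathbf{U})\big)$ and apply Bousquet's version of Talagrand's inequality. This yields, with probability at least $1-\delta$,
\[
Z\le \mathbb{E}Z+\sqrt{\frac{2\log(1/\delta)\,(n\sigma^2+2b\,n\,\mathbb{E}Z)}{n^2}}+\frac{b\log(1/\delta)}{3n},
\]
where $b\asymp\gamma$ is the range and $\sigma^2\le\gamma\lambda$. Symmetrization then gives $\mathbb{E}Z\le 2\mathfrak{R}(\mathcal{J}_{\mathbf{U}})$, using the absolute value in the definition of CRC. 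Substituting, the variance proxy becomes $\gamma(\lambda+4\cdot 2\mathfrak{R})$, which matches the $\lambda+8\mathfrak{R}(\mathcal{J}_{\mathbf{U}})$ term. The constants $3$ and $6\gamma$ are obtained by bounding $\sqrt{2}\le 3$ and $b/3\le 6\gamma$ after tracking the rescaling. The bound holds uniformly in $\mathbf{U}$ because $Z$ is a supremum, which proves the lemma.

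We expect the main obstacle to be the bookkeeping of the range constant. The natural range is $4\gamma^2$, not $\gamma$, so matching the stated $\gamma$-dependence requires either normalizing the loss or interpreting $\gamma$ as a bound on $h_{\mathbf{U}}$ itself. We would first state the result with a generic range $b$ and then specialize to $b=\gamma$ under that interpretation. Justifying the restriction of the centers to a bounded ball, which is needed for any finite range, is the other point that needs care. We would handle it by restricting $\mathcal{H}^k$ to $\{\lVert\mathbf{u}_s\rVert\le\gamma\}$, which leaves the empirical minimizer unchanged.
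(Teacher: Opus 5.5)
Your proposal is essentially the paper's proof. Both apply Bousquet's Bennett-type inequality to the supremum $Z$ of the centered empirical process, bound the variance by $\mathbb{E}h^2\le(\text{range})\cdot\mathbb{E}h\le(\text{range})\cdot\lambda$, and use symmetrization to get $\mathbb{E}Z\le 2\mathfrak{R}(\mathcal{J}_{\mathbf{U}})$. The only difference is cosmetic: the paper works with the normalized quantity $\Psi_D=\frac{n}{8\gamma}Z$ so that $\sup f\le 1$, and relaxes $\sqrt{2}\le\frac32$ and $\frac13\le\frac34$ to reach the constants $3$ and $6\gamma$.

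The range issue you raise at the end is real. However, you should drop the first-paragraph claim that rescaling ``by a constant'' turns the range $4\gamma^2$ into something of order $\gamma$; no constant does that.

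\begin{itemize}
\item The paper's step gets $\psi\le 4\gamma$ from $\lVert\mathbf{x}-\mathbf{u}_s\rVert^2\le 2(\lVert\mathbf{x}\rVert+\lVert\mathbf{u}_s\rVert)$, which is false. The correct bound is $2(\lVert\mathbf{x}\rVert^2+\lVert\mathbf{u}_s\rVert^2)\le 4\gamma^2$.
\item So the shared argument proves the displayed inequality only when $\gamma\le 1$, where $4\gamma^2\le 4\gamma$.
\item In general, take $h\in[0,b]$. Your generic-range version is $Z\le 2\mathfrak{R}(\mathcal{J}_{\mathbf{U}})+\sqrt{2b(\lambda+4\mathfrak{R}(\mathcal{J}_{\mathbf{U}}))\log(1/\delta)/n}+b\log(1/\delta)/(3n)$. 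With $b=4\gamma^2$ it implies the lemma with $\gamma$ replaced by $\gamma^2$ in the last two terms.
\end{itemize}

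No argument can establish the literal statement for large $\gamma$. Scaling data and centers by $c$ multiplies $\mathcal{L}_{\mathbb{Q}}$, $\hat{\mathcal{L}}_D$, $\mathfrak{R}(\mathcal{J}_{\mathbf{U}})$ and $\lambda$ by $c^2$, but the two deviation terms only by $c$ and $c^{3/2}$.

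A concrete counterexample: take $k=1$, $\mathbb{Q}$ uniform on $\{-\gamma,\gamma\}\subset\mathbb{R}$, and centers restricted to $[-\gamma,\gamma]$ as you and the paper both do.
\begin{itemize}
\item One gets $Z=2\gamma\lvert\bar{x}\rvert$ and $2\mathfrak{R}(\mathcal{J}_{\mathbf{U}})=8\gamma^2\,\mathbb{E}\lvert\frac1n\sum_i\sigma_i\rvert$.
\item For $n=100$ and $\delta=10^{-3}$, the probability that the stated bound is violated tends to about $1.8\times 10^{-3}>\delta$ as $\gamma\to\infty$.
\end{itemize}

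So keep your plan of stating the result for a generic range $b$. Then specialize to $b=4\gamma^2$, or add the hypothesis $\gamma\le 1$, rather than reinterpreting $\gamma$.
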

\noindent We provide the proof of Lemma \ref{Generalization error 0} in Section \ref{Generalization error 0proof}.

\begin{proof}[\textbf{Proof of Theorem \ref{k-means  generalization error bound}}]
	Next, by using the lemmas presented above, we prove Theorem \ref{k-means  generalization error bound}. According to the result in \cite{oneto2015local}, there is a probability of $1-\delta$ that the following inequality remains valid:
	\begin{equation}
		\mathfrak{R}(\mathcal{J}_{\mathbf{U}}) \leq \mathfrak{R}_{n}(\mathcal{J}_{\mathbf{U}})+\sqrt{\frac{2  \log (1/\delta)}{n}}.
	\end{equation}
	Then we can use Lemma \ref{Rademacher complexity0} to get
	\begin{equation}
		\label{R_value_inequality_final0}
		\begin{aligned}
			\mathfrak{R}(\mathcal{J}_{\mathbf{U}}) \leq & 4\gamma^2 V \sqrt {\frac{k}{n}}\log ^{2}(2n\sqrt{2\gamma n})+\sqrt{\frac{2  \log (1/\delta)}{n}}.
		\end{aligned}
	\end{equation}
	By substituting Inequality (\ref{R_value_inequality_final0}) into the corresponding term in Inequality (\ref{generalization error inequality}) from Lemma \ref{Generalization error 0}, we can thus conclude the proof for Theorem \ref{k-means  generalization error bound}.	
\end{proof}

\subsection{Proof of Lemma \ref{Rademacher complexity0}}
\label{Rademacher complexity0proof}
To prove Lemma \ref{Rademacher complexity0}, we introduce Lemma \ref{Rademacher complexity_property0} and Lemma \ref{R_vallue_max0}.
\begin{lemma}
	\label{Rademacher complexity_property0}
	{\rm (Theorem 1, provided in \cite{foster2019vector})}. 
	Let  $\psi: \mathbb{R}^{k} \rightarrow \mathbb{R}$ be a function that satisfies the property of being L-Lipschitz w.r.t. the $L_{\infty}$ norm. This property is defined as  $\lVert \psi(\mathbf{q})-\psi(\mathbf{q}^{\prime})\rVert _{\infty} \leq L \cdot\lVert \mathbf{q}-\mathbf{q}^{\prime}\rVert _{\infty}$, for all $\mathbf{q}, \mathbf{q}^{\prime} \in \mathbb{R}^{k}$. 
	Let $\mathcal{G}\subseteq\left\{g: \mathcal{X} \rightarrow \mathbb{R}^{k}\right\}$.  
	If $\max \left\{|\psi(g(\mathbf{x}))|,\lVert g(\mathbf{x})\rVert_{\infty}\right\} \leq b$ for any positive value $r$,  and there exists a constant $V>0$, then the following inequality holds
	$$
	\mathfrak{R}_{n}(\psi \circ \mathcal{G}) \leq V \cdot L \sqrt{k} \max _{s} \widetilde{\mathfrak{R}}_{n}(\mathcal{G}_{s}) \log ^{\frac{3}{2}+r}(\frac{b n}{\max _{s} \widetilde{\mathfrak{R}}_{n}(\mathcal{G}_{s})}),
	$$
	where $\widetilde{\mathfrak{R}}_{n}(\mathcal{G}_{s})=\sup _{D \in \mathcal{X}^{n}} \mathfrak{R}_{n}(\mathcal{G}_{s})$ and
	$\mathfrak{R}_{n}(\psi\circ \mathcal{G})=\mathbb{E}_{\boldsymbol{\sigma}}[\sup _{g \in \mathcal{G}}\lvert \sum_{i=1}^{n} \sigma_{i} \psi(g(\mathbf{x}_{i}))\rvert]$. Here $\mathcal{G}_{s}$ denotes the family of functions  corresponding to the $s$-th output coordinate of $\mathcal{G}$.
\end{lemma}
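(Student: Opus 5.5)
The plan is to prove this vector-contraction inequality by chaining, going through $L_\infty$ covering numbers and then fat-shattering dimensions, along the lines of \cite{foster2019vector}. I work with the normalized complexity $\frac{1}{n}\sum_i\sigma_i(\cdot)$ throughout and rescale at the end. The argument has four steps.

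\textbf{Step 1 (covering the composed class).} Fix the sample $D$. The $L_\infty$-Lipschitz property of $\psi$ controls covers of $\psi\circ\mathcal{G}$ by covers of the coordinate classes. Suppose that, for each $s$, $C_s$ is an $\varepsilon$-cover of $\mathcal{G}_s$ in the empirical sup-metric $\max_i|g_s(\mathbf{x}_i)-g'_s(\mathbf{x}_i)|$. Then the product $\{\psi\circ(c_1,\dots,c_k)\}$ is an $L\varepsilon$-cover of $\psi\circ\mathcal{G}$ in the same metric, since $|\psi(g(\mathbf{x}_i))-\psi(c(\mathbf{x}_i))|\le L\max_s|g_s(\mathbf{x}_i)-c_s(\mathbf{x}_i)|$. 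Hence
\[
\log\mathcal{N}_\infty(\psi\circ\mathcal{G},L\varepsilon,D)\le k\max_s\log\mathcal{N}_\infty(\mathcal{G}_s,\varepsilon,D).
\]
This is where the factor $\sqrt{k}$ will come from after taking square roots in the entropy integral.

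\textbf{Step 2 (sup-norm entropy via fat-shattering).} For each $s$, I bound the empirical $L_\infty$ entropy of the $[-b,b]$-valued class $\mathcal{G}_s$ by its fat-shattering dimension. I would invoke the sharp Rudelson--Vershynin / Mendelson--Vershynin estimate, which gives, for any $r>0$,
\[
\log\mathcal{N}_\infty(\mathcal{G}_s,\varepsilon,D)\lesssim \mathrm{fat}_{c\varepsilon}(\mathcal{G}_s)\,\log^{1+r}\!\big(bn/\varepsilon\big).
\]
The free parameter $r$ in the statement enters exactly here. I expect this to be the main obstacle. The classical Alon et al.\ bound gives $\log^2$, which would worsen the final exponent to $2$. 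To get $\frac{3}{2}+r$, I would use the refined estimate with exponent $1+r$ and track the constant $c$ and the dependence on $r$ (absorbed into $V$).

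\textbf{Step 3 (fat-shattering via worst-case Rademacher complexity).} Suppose $m$ points are $\varepsilon$-shattered by $\mathcal{G}_s$. Place them in a sample of size $n$ by repeating each one $n/m$ times. For every sign pattern, shattering supplies a function that achieves correlation at least $\varepsilon/2$ with the signs, up to the witnesses. Averaging over the signs then gives $\widetilde{\mathfrak{R}}_n(\mathcal{G}_s)\gtrsim\varepsilon\sqrt{m/n}$. Consequently
\[
\mathrm{fat}_\varepsilon(\mathcal{G}_s)\lesssim n\,\widetilde{\mathfrak{R}}_n(\mathcal{G}_s)^2/\varepsilon^2 .
\]
This is why the lemma involves the worst-case complexity $\widetilde{\mathfrak{R}}_n$ rather than the complexity on $D$.

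\textbf{Step 4 (Dudley integral).} The empirical $L_2$ metric is dominated by $L_\infty$. So Dudley's refined bound gives
\[
\mathfrak{R}_n(\psi\circ\mathcal{G})\le\inf_{\alpha>0}\Big\{4\alpha+\tfrac{12}{\sqrt n}\int_\alpha^{b}\sqrt{\log\mathcal{N}_\infty(\psi\circ\mathcal{G},\varepsilon,D)}\,d\varepsilon\Big\}.
\]
Combining Steps 1--3, the integrand is at most $L\sqrt{k}\,\sqrt{n}\,\max_s\widetilde{\mathfrak{R}}_n(\mathcal{G}_s)\,\varepsilon^{-1}\log^{(1+r)/2}(bn/\varepsilon)$. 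Integrating $\varepsilon^{-1}\log^{(1+r)/2}$ from $\alpha$ to $b$ produces a factor $\log^{(3+r)/2}(b/\alpha)$, up to the $n$ inside the logarithm. I would then choose $\alpha\asymp L\sqrt{k}\max_s\widetilde{\mathfrak{R}}_n(\mathcal{G}_s)$ (normalized), so that the $4\alpha$ term has the same order. Merging the two logarithms yields a total power at most $\tfrac32+r$ of $\log\big(bn/\max_s\widetilde{\mathfrak{R}}_n(\mathcal{G}_s)\big)$, after relabeling $r$. All universal constants and the $r$-dependence are absorbed into $V$. Finally, the absolute value inside the supremum costs only a factor of $2$, using symmetry of the class together with the constant function.
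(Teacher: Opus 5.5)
Your four steps reproduce the argument behind Foster and Rakhlin's Theorem~1, which the paper does not prove but only cites. That argument has four ingredients, all of which you have:
product $L_\infty$ covers from the $\ell_\infty$-Lipschitz property, which gives the $\sqrt{k}$;
the Rudelson--Vershynin sup-norm entropy bound, which gives the exponent $\tfrac32+r$ (the Mendelson--Vershynin $L_2$ bound would not serve, because $L_2$ coordinate covers cost an extra $\sqrt{k}$ in Step~1);
the estimate $\mathrm{fat}_\varepsilon\lesssim n\widetilde{\mathfrak{R}}_n^2/\varepsilon^2$ for $\varepsilon\gtrsim\widetilde{\mathfrak{R}}_n$ in your normalization, which your choice of $\alpha$ respects;
and Dudley's integral.
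That part is sound.

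The step that fails is the last one. Chaining bounds the signed quantity $\mathbb{E}_{\boldsymbol\sigma}\sup_{g}\sum_i\sigma_i\psi(g(\mathbf{x}_i))$. Removing the absolute value costs only a factor $2$ when the class contains the zero function, and $\psi\circ\mathcal{G}$ need not. In general, for a fixed $g_0\in\mathcal{G}$, you only get
\[
\mathbb{E}_{\boldsymbol\sigma}\sup_{g\in\mathcal{G}}\Big|\sum_{i=1}^{n}\sigma_i\psi(g(\mathbf{x}_i))\Big|\le 2\,\mathbb{E}_{\boldsymbol\sigma}\sup_{g\in\mathcal{G}}\sum_{i=1}^{n}\sigma_i\psi(g(\mathbf{x}_i))+\mathbb{E}_{\boldsymbol\sigma}\Big|\sum_{i=1}^{n}\sigma_i\psi(g_0(\mathbf{x}_i))\Big|.
\]
The last term can be of order $b\sqrt{n}$ however small the coordinate complexities are. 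Passing to $\psi\circ\mathcal{G}\cup(-\psi\circ\mathcal{G})$ does not help: the two copies can be $2b$ apart, and the Dudley integral then picks up the same term.

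In fact the lemma as printed, with $\lvert\cdot\rvert$, is false. Take $k=1$, $\psi\equiv b$ (Lipschitz for any $L$), and $\mathcal{G}=\{\mathbf{x}\mapsto\eta\}$ with $0<\eta\le b$. The left side equals $b\,\mathbb{E}\lvert\sum_i\sigma_i\rvert\ge b\sqrt{n/2}$. Meanwhile $\widetilde{\mathfrak{R}}_n(\mathcal{G}_1)\le\eta\sqrt{n}$, so the right side tends to $0$ as $\eta\to0$.

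Your argument proves the signed version, which is the form the chaining bound and the cited theorem deliver. To keep the absolute value, you must add $\lvert\psi(\mathbf{0})\rvert\,\mathbb{E}\lvert\sum_i\sigma_i\rvert$ to the right side. To prove that version, replace $\psi$ by $\psi-\psi(\mathbf{0})$ and adjoin the zero map to $\mathcal{G}$. The centered class then contains $0$, so your factor-$2$ step becomes legitimate. Adjoining $\mathbf{0}$ does not increase the absolute-value coordinate complexities $\widetilde{\mathfrak{R}}_n(\mathcal{G}_s)$.

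For the paper's $\psi=\min$ one has $\psi(\mathbf{0})=0$, so the application survives. As a general lemma, however, the statement needs this correction, and your proof as written does not establish it.
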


\begin{lemma}
	\label{R_vallue_max0}
	Assuming that $\lVert \mathbf{x}\rVert \leq \gamma$ for $ \forall\mathbf{x}\in \mathcal{X}$, we consider any sample set $D=\left\{\mathbf{x}_{1}, \ldots, \mathbf{x}_{n}\right\}$ drawn from the input space $\mathcal{X}$, and $\mathbf{U}\in \mathcal{H}^{k}$. 
	Let $\widetilde{\mathfrak{R}}_{n}(\mathcal{G}_{\mathbf{U}^{(s)}})=\sup _{D \in \mathcal{X}} \mathfrak{R}_{n}(\mathcal{G}_{\mathbf{U}^{(s)}})$, where $\mathcal{G}_{{\mathbf{U}^{(s)}}}$ refers to a function family  for the output coordinate $s$ of $\mathcal{G}_{\mathbf{U}}$.
	Then, we can obtain
	\begin{equation}
		\label{max_R_unper bound0}
		\begin{aligned}
			&\max_{s}{\widetilde{\mathfrak{R}}}_{n}(\mathcal{G}_{\mathbf{U}^{(s)}})  \leq
			\frac{4\gamma^2}{\sqrt{n}}. \\
		\end{aligned}
	\end{equation}
\end{lemma}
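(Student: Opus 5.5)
The plan is to bound the Rademacher complexity of a single coordinate class $\mathcal{G}_{\mathbf{U}^{(s)}}=\{\mathbf{x}\mapsto\lVert\mathbf{x}-\mathbf{u}_s\rVert^2\}$ uniformly over samples $D$ with $\lVert\mathbf{x}_i\rVert\le\gamma$. The bound is independent of $s$, so the maximum over $s$ follows immediately. Centers must be restricted to a bounded set, and I take $\lVert\mathbf{u}_s\rVert\le\gamma$. This restriction is harmless: any optimal or relevant center lies in the closed convex hull of the data, i.e.\ in the ball of radius $\gamma$. It is the implicit assumption that makes the stated constant $4\gamma^2$ meaningful.

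First expand
\[
\lVert\mathbf{x}-\mathbf{u}\rVert^2=\lVert\mathbf{x}\rVert^2-2\langle\mathbf{x},\mathbf{u}\rangle+\lVert\mathbf{u}\rVert^2 .
\]
Then
\[
\sum_i\sigma_i\lVert\mathbf{x}_i-\mathbf{u}\rVert^2=\sum_i\sigma_i\lVert\mathbf{x}_i\rVert^2-2\Big\langle\sum_i\sigma_i\mathbf{x}_i,\mathbf{u}\Big\rangle+\lVert\mathbf{u}\rVert^2\sum_i\sigma_i .
\]
Apply the triangle inequality inside the absolute value and take $\sup_{\lVert\mathbf{u}\rVert\le\gamma}$. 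Each of the three terms is then controlled by Cauchy--Schwarz and Jensen, using $\mathbb{E}_{\boldsymbol\sigma}|\sum_i\sigma_i a_i|\le(\sum_i a_i^2)^{1/2}$ and $\mathbb{E}_{\boldsymbol\sigma}\lVert\sum_i\sigma_i\mathbf{x}_i\rVert\le(\sum_i\lVert\mathbf{x}_i\rVert^2)^{1/2}$:

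The first term gives $\mathbb{E}|\sum_i\sigma_i\lVert\mathbf{x}_i\rVert^2|\le\gamma^2\sqrt n$.

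The cross term gives $2\gamma\,\mathbb{E}\lVert\sum_i\sigma_i\mathbf{x}_i\rVert\le2\gamma^2\sqrt n$.

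The last term gives $\gamma^2\,\mathbb{E}|\sum_i\sigma_i|\le\gamma^2\sqrt n$.

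Summing yields $4\gamma^2\sqrt n$. Dividing by $n$, as in the normalization of (\ref{R_valueF}), gives $\mathfrak{R}_n(\mathcal{G}_{\mathbf{U}^{(s)}})\le 4\gamma^2/\sqrt n$ for every admissible $D$. Taking the supremum over $D\in\mathcal{X}^n$ and then the maximum over $s$ gives (\ref{max_R_unper bound0}).

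The main obstacle is the boundedness of $\mathbf{u}_s$. Over all of $\mathcal{H}$ the supremum is infinite because of the $\lVert\mathbf{u}\rVert^2\sum_i\sigma_i$ term. I would therefore state explicitly that the hypothesis class is restricted to centers in the $\gamma$-ball, justified by the fact that projecting centers onto the convex hull of the support never increases the $k$-means risk. Everything else is the routine computation above.
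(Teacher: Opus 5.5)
Your proposal is correct and follows the paper's proof essentially step for step. You use the same expansion of $\lVert\mathbf{x}_j-\mathbf{u}\rVert^2$ into three terms and obtain the same bounds $\gamma^2/\sqrt{n}$, $2\gamma^2/\sqrt{n}$, $\gamma^2/\sqrt{n}$ via Cauchy--Schwarz and $\mathbb{E}_{\boldsymbol{\sigma}}\lVert\sum_j\sigma_j\beta_j\rVert^2\le\sum_j\lVert\beta_j\rVert^2$, under the same restriction $\lVert\mathbf{u}_s\rVert\le\gamma$ (which the paper simply asserts ``in the $k$-means algorithm'', whereas you make it explicit and justify it, and your Jensen bound $(\sum_j\lVert\mathbf{x}_j\rVert^4)^{1/2}$ for the first term is slightly cleaner than the paper's step of pulling $\gamma^2$ in front of $\mathbb{E}_{\boldsymbol{\sigma}}\lvert\sum_j\sigma_j\rvert$).
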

\noindent We provide the proof of Lemma \ref{R_vallue_max0} in Section \ref{R_vallue_max0proof}.

\begin{proof} [\textbf{Proof of Lemma \ref{Rademacher complexity0}}]
	Subsequently, leveraging Lemma \ref{Rademacher complexity_property0} and Lemma \ref{R_vallue_max0}, we proceed to provide a comprehensive proof of Lemma \ref{Rademacher complexity0}. First, it is easy to verify that the function $\psi(\mathbf{q})$ is 1-Lipschitz continuous w.r.t. the $L_{\infty}$-norm, i.e.,
	$\forall \mathbf{q}, \mathbf{q}^{\prime} \in \mathbb{R}^{k},\lvert \psi(\mathbf{q})-\psi(\mathbf{q}^{\prime})\rvert \leq\lVert \mathbf{q}-\mathbf{q}^{\prime}\rVert _{\infty}$. Specifically,  assuming $\psi(\mathbf{q}) \geq \psi(\mathbf{q}^{\prime})$, let $\psi(\mathbf{q}^{\prime})=q_{j}^{\prime}$ within the definition of the function $\psi$, where $j= \underset{i=1, \ldots, k}{\arg \min } \ q_{i}^{\prime}.$ Hence, there is
	\begin{equation}
		\label{psi_value_max}
		\begin{aligned}
			&\lvert \psi(\mathbf{q})-\psi(\mathbf{q}^{\prime})\rvert=\psi(\mathbf{q})-q_{j}^{\prime}  \leq q_{j}-q_{j}^{\prime}  \leq\lVert \mathbf{q}-\mathbf{q}^{\prime}\rVert _{\infty}.
		\end{aligned}
	\end{equation}
	Our next step involves verifying the another condition:
	$\max \left\{\lvert \psi(g_{\mathbf{U}}(\mathbf{x}))\rvert,\lVert g_{\mathbf{U}}(\mathbf{x})\rVert _{\infty}\right\}$  bounded by a constant. 
	We know 
	$g_{\mathbf{U}}(\mathbf{x})=(g_{\mathbf{u}_{1}}(\mathbf{x}),\ldots,g_{\mathbf{u}_{k}}(\mathbf{x}))$ with $g_{\mathbf{u}_{s}}(\mathbf{x})=\lVert  \mathbf{x}-\mathbf{u}_{s} \rVert ^2, \lVert \mathbf{x}\rVert  \leq \gamma$. 
	Obviously, in $k$-means algorithm, we have $\lVert \mathbf{u}_{s}\rVert  \leq \gamma,  \text{for}\  s=1,\ldots,k$  and 
	\begin{equation}
		\label{gx_sup0}
		\begin{aligned}
			&g_{\mathbf{u}_{s}}(\mathbf{x})=\lVert  \mathbf{x}-\mathbf{u}_{s} \rVert ^2\leq 2( \lVert \mathbf{x}\rVert +\lVert \mathbf{u}_{s}\rVert ) \leq 4\gamma.
		\end{aligned}
	\end{equation}
	Hence, it is easy to get
	\begin{equation}
		\label{psi_valuebound}
		\lvert \psi(g_{\mathbf{U}}(\mathbf{x}))\rvert=
		\lvert \min(g_{\mathbf{u}_{1}},\ldots,g_{\mathbf{u}_{k}})\rvert \leq 4\gamma
	\end{equation}
	and 
	\begin{equation*}
		\lVert g_{\mathbf{U}}(\mathbf{x})\rVert _{\infty}=\max _{s=1,\ldots,k}\lvert g_{\mathbf{u}_{s}}(\mathbf{x})\rvert \leq 4\gamma.
	\end{equation*}
	According to the above analysis, one can see that the function $\psi(\mathbf{q})$ is 1-Lipschitz continuous w.r.t. the $L_{\infty}$-norm and $\max \left\{\lvert \psi(g_{\mathbf{U}}(\mathbf{x}))\rvert,\lVert g_{\mathbf{U}}(\mathbf{x})\rVert _{\infty}\right\} \leq 4\gamma$. Therefore, leveraging  Lemma \ref{Rademacher complexity_property0} with $L=1$,  $b= 4\gamma$, and $r=1 / 2$ within our objective function allows us to achieve
	\begin{equation}
		\label{R_value_inequality0}
		\mathfrak{R}_{n}(\mathcal{F}_{\mathbf{U}}) \leq V \sqrt{k} \max _{s} \widetilde{\mathfrak{R}}_{n}(\mathcal{G}_{\mathbf{U}^{(s)}}) \log ^{2}(\frac{4\gamma n}{\max _{s} \widetilde{\mathfrak{R}}_{n}(\mathcal{G}_{\mathbf{U}^{(s)}})}),
	\end{equation} 
	where $\mathcal{G}_{\mathbf{U}^{(s)}}$ is a family of the output coordinate $s$ of $\mathcal{G}_{\mathbf{U}}$,
	$\widetilde{\mathfrak{R}}_{n}(\mathcal{G}_{\mathbf{U}^{(s)}})=\sup _{D \in \mathcal{X}^{n}} \mathfrak{R}_{n}(\mathcal{G}_{\mathbf{U}^{(s)}})$.
	On the other hand, we can obtain
	\begin{equation}
		\label{R_value0}
		\begin{aligned}
			&\widetilde{\mathfrak{R}}_{n}(\mathcal{G}_{\mathbf{U}^{(s)}}) 
			=\sup _{D \in \mathcal{X}^{n}} \mathfrak{R}_{n}(\mathcal{G}_{\mathbf{U}^{(s)}})\\
			&=\sup _{D \in \mathcal{X}^{n}} \mathbb{E}_{\boldsymbol{\sigma}}[\sup _{g_{\mathbf{u}} \in \mathcal{G}_{\mathbf{U}^{(s)}}}\lvert\frac{1}{n}\sum_{j=1}^{n} \sigma_{j} g_{\mathbf{u}}(\mathbf{x}_{j})\rvert] \\
			& \geq \sup _{\mathbf{x} \in \mathcal{X}} \mathbb{E}_{\boldsymbol{\sigma}}[\sup _{g_{\mathbf{u}} \in \mathcal{G}_{\mathbf{U}^{(s)}}}\lvert \frac{1}{n}\sum_{j=1}^{n} \sigma_{j} g_{\mathbf{u}}(\mathbf{x})\rvert] \\
			& \geq \sup _{\mathbf{x} \in \mathcal{X}, g_{\mathbf{u}} \in \mathcal{G}_{\mathbf{U}^{(s)}}} \mathbb{E}_{\boldsymbol{\sigma}}[\lvert \frac{1}{n}\sum_{j=1}^{n} \sigma_{j} g_{\mathbf{u}}(\mathbf{x})\rvert]\text { (via Jensen's inequality) }. 
		\end{aligned}
	\end{equation}
	As outlined in Lemma 24 (a) of \cite{lei2019data} with $p=2$, assuming $\sigma_{1},\ldots,\sigma_{n}$ are independent Rademacher variables, and $\beta_{1}, \ldots, \beta_{n} $ are drawn from the a Hilbert space $\mathcal{H}$ with the norm denoted by $\lVert \cdot\rVert $, it is established that 
	\begin{equation}
		\label{In_10}
		\mathbb{E}_{\boldsymbol{\sigma}}[\lVert \sum_{i=1}^{n} \sigma_{i} \beta_{i} \rVert ^{2}]\leq   \sum_{i=1}^{n}\lVert  \beta_{i} \rVert ^2
	\end{equation}
	and
	\begin{equation}
		\label{In_20}
		\mathbb{E}_{\boldsymbol{\sigma}}[\lvert \sum_{i=1}^{n} \sigma_{i} \beta_{i}\rvert]\geq  \frac{\sqrt{2}}{2} \sqrt{\sum_{i=1}^{n}\lVert  \beta_{i} \rVert ^2}.
	\end{equation}
	By (\ref{In_20}), there is
	\begin{equation*}
		\begin{aligned}
			\widetilde{\mathfrak{R}}_{n}(\mathcal{G}_{\mathbf{U}^{(s)}}) &\geq \sup _{\mathbf{x} \in \mathcal{X}, g_{\mathbf{u}} \in \mathcal{G}_{\mathbf{U}^{(s)}}} \mathbb{E}_{\boldsymbol{\sigma}}[\lvert \frac{1}{n}\sum_{j=1}^{n} \sigma_{j} g_{\mathbf{u}}(\mathbf{x})\rvert] \\
			&\geq  
			\frac{\sqrt{2}}{2\sqrt{n}}\sup _{\mathbf{x} \in \mathcal{X}, g_{\mathbf{u}} \in \mathcal{G}_{\mathbf{U}^{(s)}}}( \lvert g_{\mathbf{u}}(\mathbf{x})\rvert)^{1/2}.
		\end{aligned}
	\end{equation*}
	We give a definition of 
	$u_{s}=\sup _{\mathbf{x} \in \mathcal{X}, g_{\mathbf{u}} \in \mathcal{G}_{\mathbf{U}^{(s)}}}\lvert g_{\mathbf{u}}(\mathbf{x})\rvert;$ $u=\max \left\{u_{s}, s=1, \ldots, k\right\}.$
	Obviously, one has
	$
	\widetilde{\mathfrak{R}}_{n}(\mathcal{G}_{\mathbf{U}^{(s)}}) \geq\frac{\sqrt{2u_{s}}}{2\sqrt{n}}.
	$
	Then, there is  $	\max _{s} \widetilde{\mathfrak{R}}_{n}(\mathcal{G}_{\mathbf{U}^{(s)}})\geq \frac{\sqrt{2u}}{2\sqrt{n}}$.
	Using Eq.\,(\ref{gx_sup0}), we know $\max _{s}\lvert g_{\mathbf{u}_s}(\mathbf{x})\rvert \leq 4\gamma$. Thus, it is easy to obtain $u\leq 4\gamma$. 
	So, we derive
	\begin{equation*}
		\max _{s} \widetilde{\mathfrak{R}}_{n}(\mathcal{G}_{\mathbf{U}^{(s)}})\geq\frac{\sqrt{2\gamma}}{\sqrt{n}}.
	\end{equation*}
	Then, the following inequality holds
	\begin{equation}
		\label{max_R_lowerbound0}
		\frac{4\gamma n}{\max _{s} \widetilde{\mathfrak{R}}_{n}(\mathcal{G}_{\mathbf{U}^{(s)}})} \leq 
		2n\sqrt{2\gamma n}.
	\end{equation}
	We substitute  (\ref{max_R_unper bound0})  and (\ref{max_R_lowerbound0}) into (\ref{R_value_inequality0}) to conclude the proof of Lemma \ref{Rademacher complexity0}.
\end{proof}

\subsection{Proof of Lemma \ref{Generalization error 0} }
\label{Generalization error 0proof}
To achieve a fast generalization rate, we rely on a Bernstein-type concentration inequality provided in the paper \cite{bousquet2002bennett}. The conclusion corresponding to this is detailed in Lemma \ref{Bernstein-type concentration}.
\begin{lemma} 
	\label{Bernstein-type concentration}({\rm Theorem 2.11, presented in \cite{bousquet2002bennett}}) Suppose the samples $\mathbf{x}_1, \ldots, \mathbf{x}_n$  follow i.i.d. according to $\rho$.
	Denote $\mathcal{F}$ as a set of functions mapping from $\mathcal{X}$ to $\mathbb{R}$. Suppose that all functions $f\in \mathcal{F}$ are $\rho$-measurable, square-integrable and meet $\mathbb{E}[f]=0$. 
	If $\sup _{f\in \mathcal{F}} f\leq 1$, then let
	$
	G=\sup _{f\in \mathcal{F}} \sum_{i=1}^n f(\mathbf{x}_i).
	$
	If $\sup _{f \in \mathcal{F}}\|f\|_{\infty} \leq 1$, $G$ can be defined as either as described above or as
	$
	G=\sup _{f\in \mathcal{F}}\lvert \sum_{i=1}^n f(\mathbf{x}_i)\rvert.
	$
	Assuming $\eta$ is a positive real number satisfying $\eta^2 \geq \sup _{f\in \mathcal{F}} \operatorname{Var}[f(\mathbf{x}_1)]$ almost surely, for all $c\geq 0$, we obtain
	\begin{equation*}
		\operatorname{Pr}[G \geq \mathbb{E}[G]+c] \leq \exp \left\{-t v(\frac{c}{t})\right\},
	\end{equation*}
	where $v(x)=(1+x) \log (1+x)-x$ and $t=2 \mathbb{E}[G] + n \eta^2$, also
	$$
	\operatorname{Pr}[G\geq \mathbb{E}[G]+\sqrt{2 ct}+\frac{c}{3}] \leq e^{-c} .
	$$
\end{lemma}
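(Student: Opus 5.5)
Since this is Theorem 2.11 of \cite{bousquet2002bennett}, I would reproduce Bousquet's argument: the entropy method combined with a Herbst-type differential inequality for the log-moment generating function of $G$. Set $Z=G=\sup_{f\in\mathcal{F}}\sum_{i=1}^n f(\mathbf{x}_i)$. For each $k$, let $Z_k=\sup_{f}\sum_{i\neq k} f(\mathbf{x}_i)$ be the leave-one-out supremum. Let $f_k$ attain (or nearly attain) the supremum defining $Z_k$, and $\hat f$ the one attaining $Z$. Two deterministic facts drive everything:
\[
Z-Z_k\le \hat f(\mathbf{x}_k)\le 1,\qquad Z-Z_k\ge f_k(\mathbf{x}_k),\qquad \sum_{k=1}^n (Z-Z_k)\le Z .
\]
Taking conditional expectations $\mathbb{E}_k$ over $\mathbf{x}_k$ gives $\mathbb{E}_k f_k(\mathbf{x}_k)=0$, because $f_k$ does not depend on $\mathbf{x}_k$ and $\mathbb{E}f=0$. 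For the absolute-value version of $G$ when $\|f\|_\infty\le 1$, the same facts hold after replacing $\mathcal{F}$ by $\mathcal{F}\cup(-\mathcal{F})$.

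\textbf{Step 1 (tensorization).} By tensorization of entropy,
\[
\operatorname{Ent}(e^{\lambda Z})\le \sum_{k}\mathbb{E}\,\operatorname{Ent}_k(e^{\lambda Z}).
\]
Using the variational bound $\operatorname{Ent}_k(e^{\lambda Z})\le \mathbb{E}_k[\phi(-\lambda(Z-Z_k))e^{\lambda Z}]$ with $\phi(x)=e^x-x-1$, I would then split $\phi(-\lambda(Z-Z_k))$. The part where $Z-Z_k$ is close to $f_k(\mathbf{x}_k)$ is controlled by the variance term $\eta^2$. The remaining part is controlled by $\sum_k(Z-Z_k)\le Z$ and the upper bound $Z-Z_k\le 1$, which makes $\phi$ monotone on the relevant range.

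\textbf{Step 2 (differential inequality).} The goal of Step 1 is the modified log-Sobolev inequality
\[
\lambda F'(\lambda)-(e^\lambda-1)F(\lambda)\le \phi(-\lambda)\,(n\eta^2+2\mathbb{E}Z)\quad\text{for }\lambda\ge 0,
\]
where $F(\lambda)=\log\mathbb{E}e^{\lambda(Z-\mathbb{E}Z)}$. This is the main obstacle. Bousquet's refinement consists precisely in obtaining the constant $t=2\mathbb{E}Z+n\eta^2$ with coefficient $1$ on $\eta^2$, rather than Talagrand's or Massart's worse constants. I would first try the convexity bound
\[
\phi(-\lambda y)\le y^2\phi(-\lambda)\quad\text{for }y\le 1,
\]
together with $\mathbb{E}_k\bigl[(Z-Z_k)^2\bigr]\le \mathbb{E}_k f_k(\mathbf{x}_k)^2+2\,\mathbb{E}_k(Z-Z_k)$-type estimates, and then sum over $k$.

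\textbf{Step 3 (integration).} I would integrate the differential inequality, dividing by $\lambda^2$ as in Herbst's argument, to get $F(\lambda)\le t\,\phi(\lambda)$. This says $Z-\mathbb{E}Z$ is sub-Poisson with variance factor $t$.

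\textbf{Step 4 (tail bounds).} Chernoff's bound gives
\[
\Pr[Z\ge\mathbb{E}Z+c]\le\exp\bigl(-\sup_\lambda(\lambda c-t\phi(\lambda))\bigr)=\exp\{-t\,v(c/t)\}.
\]
Finally, the elementary inequality $v(x)\ge \frac{x^2}{2(1+x/3)}$, inverted, yields $\Pr[Z\ge\mathbb{E}Z+\sqrt{2ct}+c/3]\le e^{-c}$.
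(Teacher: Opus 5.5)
The paper does not prove this lemma; it imports it from Bousquet, so a citation is all it relies on. Your reconstruction of Bousquet's argument breaks down at the step you yourself call the main obstacle, and the tools you propose there are wrong.

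First, the bound $\phi(-\lambda y)\le y^2\phi(-\lambda)$ for $y\le 1$ is false. The function $\phi(u)/u^2=\int_0^1(1-s)e^{us}\,ds$ is increasing on $\mathbb{R}$, and $y\le 1$ means $-\lambda y\ge-\lambda$, so in fact $\phi(-\lambda y)\ge y^2\phi(-\lambda)$. For example, with $\lambda=2$ and $y=1/2$, $\phi(-1)=e^{-1}\approx 0.37>\tfrac14\phi(-2)\approx 0.28$. The quadratic bound that does hold for $y\le 1$ is $\phi(\lambda y)\le y^2\phi(\lambda)$. The classical proof of Bennett's inequality applies it to the moment generating function, not to the entropy term. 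For $0\le y\le 1$ the usable estimate is the linear one, $\phi(-\lambda y)\le y\,\phi(-\lambda)$. Combined with $\sum_k(Z-Z_k)\le Z$, it produces a $\phi(-\lambda)\mathbb{E}[Ze^{\lambda Z}]$ term. Everything then hinges on the negative increments $Z-Z_k<0$, where only the mean-zero lower bound $f_k(\mathbf{x}_k)$ is available.

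Second, your target $\lambda F'-(e^{\lambda}-1)F\le t\,\phi(-\lambda)$ is false. As $\lambda\to0^+$ its left side is $\operatorname{Var}(Z)\lambda^2+o(\lambda^2)$ and its right side is $t\lambda^2/2+o(\lambda^2)$. It would therefore force $\operatorname{Var}(Z)\le t/2$. This fails for $\mathcal{F}=\{f\}$ with $\eta^2=\operatorname{Var}f$, where $\mathbb{E}Z=0$ and $\operatorname{Var}(Z)=n\eta^2=t$.

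The inequality that does integrate to $F\le t\phi(\lambda)$ is $(1-e^{-\lambda})F'-F\le t\,\phi(-\lambda)$. It is integrated with the factor $1/(e^{\lambda}-1)$, not Herbst's $1/\lambda^2$: one has $\frac{d}{d\lambda}\frac{F}{e^{\lambda}-1}=\frac{e^{\lambda}[(1-e^{-\lambda})F'-F]}{(e^{\lambda}-1)^2}$, and $t\phi(\lambda)$ gives equality. In entropy form it reads
\[
\operatorname{Ent}(e^{\lambda Z})\le\phi(-\lambda)\bigl(\mathbb{E}[Ze^{\lambda Z}]+(\mathbb{E}Z+n\eta^2)\,\mathbb{E}e^{\lambda Z}\bigr).
\]
Extracting an estimate this sharp from the leave-one-out structure, in particular controlling the negative increments without degrading constants, is exactly Bousquet's contribution. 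Your sketch does not supply it, so Steps 2--3 as written do not establish the lemma.

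A smaller point in Step 4: inverting $v(x)\ge\frac{x^2}{2(1+x/3)}$ only gives the deviation $\sqrt{2ct}+\frac{2c}{3}$. Indeed, with $s=\sqrt{2ct}+\frac c3$ one has $\frac{s^2}{2(t+s/3)}=c-\frac{c^2}{18(t+s/3)}<c$. To get $\frac c3$, use $\phi(\lambda)\le\frac{\lambda^2}{2(1-\lambda/3)}$ for $0\le\lambda<3$ and optimize over $\lambda$ exactly. This gives $t\,v(s/t)\ge 9t\bigl(1+\frac{s}{3t}-\sqrt{1+\frac{2s}{3t}}\bigr)$, whose right side equals $c$ precisely at $s=\sqrt{2ct}+\frac c3$.
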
	

\begin{proof}[\textbf{Proof of Lemma \ref{Generalization error 0}}]
	We now provide the proof of Lemma \ref{Generalization error 0}, relying on the application of the functional generalization of Bennett's inequality as presented in Lemma \ref{Bernstein-type concentration}. Considering any sample $D=\left\{\mathbf{x}_1, \ldots, \mathbf{x}_n\right\}$ and any hypothesis $g_{\mathbf{U}} \in  \mathcal{G}_{\mathbf{U}}$, our focus shifts to examining the uniform upper bound of 
	$\mathcal{L}_{\mathbb{Q}}(\mathbf{U})-\hat{\mathcal{L}}_{D}(\mathbf{U}) $.
	The argument involves utilizing the functional generalization of Bennett's inequality, i.e., Lemma \ref{Bernstein-type concentration}, to the function $\Psi_D$ defined as
	\begin{equation*}
		\Psi_D=\frac{n}{8\gamma} \sup _{g_{\mathbf{U}}\in \mathcal{G}_{
				\mathbf{U}}}(\mathcal{L}_{\mathbb{Q}}(\mathbf{U})-\hat{\mathcal{L}}_{D}(\mathbf{U})),
	\end{equation*}
	which meets the condition provided in Lemma \ref{Bernstein-type concentration}. 
	We start by verifying this claim. Using the symbols provided in Lemma \ref{Bernstein-type concentration}, denote $f(\mathbf{x}_i)=\frac{1}{8\gamma}(\mathbb{E}[\psi(g_{\mathbf{U}}(\mathbf{x}))]-\psi(g_{\mathbf{U}}(\mathbf{x}_i)) )$. Obviously, we have $\mathbb{E}[f]=0$. Furthermore, since the minimum function $\psi$ is bounded by $4\gamma$ by (\ref{psi_valuebound}), one can obtain
	$\sup _{f \in \mathcal{F}} f=\sup _{g_{\mathbf{U}} \in  \mathcal{G}_{\mathbf{U}}} \frac{1}{8\gamma}(\mathbb{E}[\psi(g_{\mathbf{U}}(\mathbf{x}))]-\psi(g_{\mathbf{U}}(\mathbf{x}_i)) )\leq 1$. Hence, the $\Psi_D$ essentially corresponds to $G$ in Lemma \ref{Bernstein-type concentration}, then we can verify
	\begin{equation*}
		\begin{aligned} 
			G & =\sup _{g\in \mathcal{G}} \sum_{i=1}^n g(X_i)\\
			&=\sup _{g_{\mathbf{U}} \in \mathcal{G}_{
					\mathbf{U}}} \sum_{i=1}^n \frac{1}{8\gamma}(\mathbb{E}[\psi(g_{\mathbf{U}}(\mathbf{x}))]-\psi(g_{\mathbf{U}}(\mathbf{x}_i)) )\\
			& =\frac{n}{8\gamma} \sup _{g_{\mathbf{U}} \in \mathcal{G}_{\mathbf{U}}}	(\mathcal{L}_{\mathbb{Q}}(\mathbf{U})-\hat{\mathcal{L}}_{D}(\mathbf{U}))\\
			&=\Psi_D.
		\end{aligned}
	\end{equation*}
	So, based on Lemma \ref{Bernstein-type concentration}, for  $ \forall  c \geq 0$, we obtain
	\begin{equation}
		\label{Pr_value}
		\operatorname{Pr}[\Psi_D \geq \mathbb{E}_D[\Psi_D]+c] \leq \exp \left\{-t v(\frac{c}{t})\right\},
	\end{equation}
	where $v(x)=(1+x) \log (1+x)-x$, 
	$t=2 \mathbb{E}_D[\Psi_D]+n \eta^2$, 
	and
	$\eta^2=\sup _{g_{\mathbf{U}}\in\mathcal{G}_{\mathbf{U}}} \operatorname{Var}[\frac{1}{8\gamma} \mathbb{E}_{\mathbf{x}^{\prime}\sim \mathbb{Q}}[\psi(g_{\mathbf{U}}(\mathbf{x}^{\prime}))]\\
	-\psi(g_{\mathbf{U}}(\mathbf{x})) ].$
	Reversing Inequality  (\ref{Pr_value}) allows us to conclude that for 
	$ \forall g_{\mathbf{U}}\in\mathcal{G}_{\mathbf{U}}$ and for any $\delta>0$, with probability at least $1-\delta$, we have
	$$
	\Psi_D\leq \mathbb{E}_D[\Psi_D]+\frac{3 \log (1 / \delta)}{4}+\frac{3}{2} \sqrt{t\log (1 / \delta)}.
	$$
	Then, we need to bound both $\eta^2$ and $\mathbb{E}_D[\Psi_D]$. Initially, we bound $\eta^2$ as follows 
	\begin{equation*}
		\begin{aligned}
			\eta^2 & =\sup _{g_{\mathbf{U}}\in\mathcal{G}_{\mathbf{U}}} \operatorname{Var}[\frac{1}{8\gamma} \mathbb{E}_{\mathbf{x}^{\prime}\sim \mathbb{Q}}[\psi(g_{\mathbf{U}}(\mathbf{x}^{\prime}))]-\psi(g_{\mathbf{U}}(\mathbf{x})) ] \\
			& =\sup _{g_{\mathbf{U}}\in\mathcal{G}_{\mathbf{U}}}\frac{1}{64\gamma^2} 
			\mathbb{E}_{\mathbf{x}\sim \mathbb{Q}}[\psi(g_{\mathbf{U}}(\mathbf{x})-\mathbb{E}_{\mathbf{x}^{\prime}\sim \mathbb{Q}}[\psi(g_{\mathbf{U}}(\mathbf{x}))])^2] \\
			& \leq \sup _{g_{\mathbf{U}}\in\mathcal{G}_{\mathbf{U}}} \frac{1}{64\gamma^2} \mathbb{E}_{\mathbf{x}\sim \mathbb{Q}}[\psi(g_{\mathbf{U}}(\mathbf{x}))^2] \\
			& \leq \sup _{g_{\mathbf{U}}\in\mathcal{G}_{\mathbf{U}}} \frac{1}{16\gamma} \mathbb{E}_{\mathbf{x}\sim \mathbb{Q}}[\psi(g_{\mathbf{U}}(\mathbf{x}))]\\
			& \leq \sup _{g_{\mathbf{U}}\in\mathcal{G}_{\mathbf{U}}} \frac{1}{16\gamma} \mathbb{E}_{\mathbf{x}\sim \mathbb{Q}}[\psi(g_{\mathbf{U}}(\mathbf{x}))]\\
			& =\frac{1}{16\gamma}\sup _{g_{\mathbf{U}}\in\mathcal{G}_{\mathbf{U}}} \mathbb{E}_{\mathbf{x}\sim \mathbb{Q}} [\hat{\mathcal{L}}_{D}(\mathbf{U}) ]\\
			&\leq \frac{1}{16\gamma} \mathbb{E}_{\mathbf{x}\sim \mathbb{Q}} [\sup _{g_{\mathbf{U}}\in\mathcal{G}_{\mathbf{U}}}\hat{\mathcal{L}}_{D}(\mathbf{U}) ]\\
			&= \frac{\lambda}{16\gamma},
		\end{aligned}
	\end{equation*}
	where $\lambda=\mathbb{E}_{\mathbf{x}\sim \mathbb{Q}} [\sup _{g_{\mathbf{U}}\in\mathcal{G}_{\mathbf{U}}}\hat{\mathcal{L}}_{D}(\mathbf{U}) ]$.
	Then, leveraging the standard symmetrization technique, we determine the bound of $\mathbb{E}_D[\Psi_D]$ by
	\begin{equation*}
		\label{upper_bound_expected_Psi}
		\begin{aligned}
			&\mathbb{E}_D[\Psi_D]  =\mathbb{E}_D[\frac{n}{8\gamma} \sup _{g_{\mathbf{U}}\in\mathcal{G}_{\mathbf{U}}}(\mathcal{L}_{\mathbb{Q}}(\mathbf{U})-\hat{\mathcal{L}}_{D}(\mathbf{U}))] \\
			&=\frac{n}{8\gamma} \mathbb{E}_D[\sup _{g_{\mathbf{U}}\in\mathcal{G}_{\mathbf{U}}}(\mathbb{E}_{D^{\prime}}[\frac{1}{n} \sum_{i=1}^n \psi(g_{\mathbf{U}}(\mathbf{x}^{\prime}_i)) 
			-\frac{1}{n} \sum_{i=1}^n \psi(g_{\mathbf{U}}(\mathbf{x}_i))])] \\
			& \overset{\circled{1}}{\leq } \frac{n}{8\gamma} \mathbb{E}_{D, D^{\prime}}[\sup _{g_{\mathbf{U}}\in\mathcal{G}_{\mathbf{U}}}(\frac{1}{n} \sum_{i=1}^n \psi(g_{\mathbf{U}}(\mathbf{x}^{\prime}_i))-\frac{1}{n} \sum_{i=1}^n\psi(g_{\mathbf{U}}(\mathbf{x}_i)))] \\
			& =\frac{n}{8\gamma} \mathbb{E}_{D, D^{\prime}}[\sup _{g_{\mathbf{U}}\in\mathcal{G}_{\mathbf{U}}} \frac{1}{n} \sum_{i=1}^n(\psi(g_{\mathbf{U}}(\mathbf{x}^{\prime}_i))-\psi(g_{\mathbf{U}}(\mathbf{x}_i)))] \\
			&\overset{\circled{2}}{= }\frac{n}{8\gamma} \mathbb{E}_{\sigma, D, D^{\prime}}[\sup _{g_{\mathbf{U}}\in\mathcal{G}_{\mathbf{U}}} \frac{1}{n} \sum_{i=1}^n \sigma_i(\psi(g_{\mathbf{U}}(\mathbf{x}^{\prime}_i))-\psi(g_{\mathbf{U}}(\mathbf{x}_i)))] \\
			& \overset{\circled{3}}{\leq } \frac{n}{4\gamma}  \mathbb{E}_{\sigma, D}[\sup _{g_{\mathbf{U}}\in\mathcal{G}_{\mathbf{U}}} \frac{1}{n} \sum_{i=1}^n \sigma_i\psi(g_{\mathbf{U}}(\mathbf{x}_i))]\\
			& \leq \frac{n}{4\gamma}  \mathbb{E}_{\sigma, D}[\sup _{g_{\mathbf{U}}\in\mathcal{G}_{\mathbf{U}}} \lvert \frac{1}{n} \sum_{i=1}^n \sigma_i\psi(g_{\mathbf{U}}(\mathbf{x}_i))\rvert]\\
			&=\frac{n}{4\gamma} \mathfrak{R}(\mathcal{J}_{\mathbf{U}}).
		\end{aligned}
	\end{equation*}
	In the analysis above, Inequality $\circled{1}$ is valid owing to the application of Jensen's inequality, which stems from the convexity of the supremum function.
	For the term $\circled{2}$, since Rademacher random variables $\sigma_i \mathrm{~s}$ are independent random variables uniformly distributed as $\{-1,+1\}$,  their introduction does not modify the expectation. Inequality $\circled{3}$ is valid as a result of the sub-additivity of the supremum function. 
	Therefore, we can derive the bound for $t$,
	\begin{equation*}
		t=n \eta^2+2 \mathbb{E}_D[\Psi_D] \leq \frac{n\lambda}{16\gamma}+\frac{ n}{2\gamma} \mathfrak{R}(\mathcal{J}_{\mathbf{U}}).
	\end{equation*}
	From the constraints on  $t$ and $\mathbb{E}_D[\Psi_D]$, we can obtain
	\begin{equation*}	
		\begin{aligned}
			&	\Psi_D  =\frac{n}{8\gamma} \sup _{g_{\mathbf{U}}\in \mathcal{G}_{
					\mathbf{U}}}(\mathcal{L}_{\mathbb{Q}}(\mathbf{U})-\hat{\mathcal{L}}_{D}(\mathbf{U}))\\
			&\leq \mathbb{E}_D[\Psi_D]+\frac{3\log (1 / \delta)}{4}+\frac{3\sqrt{t \log (1 / \delta)} }{2}\\
			&	\leq \frac{n}{4\gamma}   \mathfrak{R}(\mathcal{J}_{\mathbf{U}})+\frac{3\log (1 / \delta)}{4}
			+\frac{3}{2}\sqrt{(\frac{n\lambda }{16\gamma}+\frac{n}{2\gamma} \mathfrak{R}(\mathcal{J}_{\mathbf{U}})) \log (1 / \delta)}.
		\end{aligned}
	\end{equation*}
	Therefore, we deduce that for any  $g_{\mathbf{U}}\in\mathcal{G}_{\mathbf{U}}$,
	\begin{equation*}
		\begin{aligned}
			&	\mathcal{L}_{\mathbb{Q}}(\mathbf{U})-\hat{\mathcal{L}}_{D}(\mathbf{U}) 	\\
			&	\leq \sup_{g_{\mathbf{U}}\in\mathcal{G}_{\mathbf{U}}}(\mathcal{L}_{\mathbb{Q}}(\mathbf{U})-\hat{\mathcal{L}}_{D}(\mathbf{U}))\\
			&=\frac{8\gamma}{n}\Psi_D \\
			&\leq 2 \mathfrak{R}(\mathcal{\mathcal{J}_{\mathbf{U}}})+\frac{6\gamma\log (1 / \delta)}{n}+3 \sqrt{\frac{(\lambda+8  \mathfrak{R}(\mathcal{J}_{\mathbf{U}})) \gamma \log (1 / \delta)}{n}}.
		\end{aligned}
	\end{equation*}
	Thus, we finish the proof of Lemma \ref{Generalization error 0}.
\end{proof}

\subsection{Proof of Lemma \ref{R_vallue_max0}}
\label{R_vallue_max0proof}
\begin{proof}
	It is evident that for any given sample set $D$, along with $\mathbf{U}\in \mathcal{H}^{k}$ and $s\in \{1,\ldots,k\}$, the following holds
	\begin{equation}
		\label{R_ineqaility0}
		\begin{aligned}
			\mathfrak{R}_{n}(\mathcal{G}_{\mathbf{U}^{(s)}}) &=\mathbb{E}_{\mathbf{\sigma}} [\sup_{g_{\mathbf{u}\in\mathcal{G}_{\mathbf{U}^{(s)}}}} \lvert \frac{1}{n}\sum_{j=1}^{n}\mathbf{\sigma}_{j}g_{\mathbf{u}}(\mathbf{x}_{j})\rvert]
			\\
			& = \mathbb{E}_{\mathbf{\sigma}}[ \sup_{\mathbf{u}\in\mathcal{H}} \lvert \frac{1}{n}\sum_{j=1}^{n}\mathbf{\sigma}_{j}\lVert  \mathbf{x}_{j}-\mathbf{u} \rVert ^2\rvert]\\
			& =  \mathbb{E}_{\mathbf{\sigma}} [ \sup_{\mathbf{u}\in\mathcal{H}}\lvert \frac{1}{n}\sum_{j=1}^{n}\mathbf{\sigma}_{j}	(
			\lVert  \mathbf{x}_{j}\rVert ^2-2\left \langle \mathbf{x}_{j},\mathbf{u}\right \rangle +	\lVert  \mathbf{u}\rVert ^2)\rvert ] \\
			& \leq  \underbrace{\mathbb{E}_{\mathbf{\sigma}} [\lvert  \frac{1}{n}\sum_{j=1}^{n} \mathbf{\sigma}_{j}\lVert   \mathbf{x}_{j}\rVert ^2\rvert]}_{\text {Term-\circled{1}}}
			\\&+\underbrace{2\mathbb{E}_{\mathbf{\sigma}} [\sup_{\mathbf{u}\in\mathcal{H}} \lvert \frac{1}{n}\sum_{j=1}^{n}\mathbf{\sigma}_{j}\left \langle \mathbf{x}_{j},\mathbf{u}\right \rangle \rvert ]}_{\text {Term-\circled{2}}} \\
			&+\underbrace{\mathbb{E}_{\mathbf{\sigma}} [\sup_{\mathbf{u}\in\mathcal{H}}\lvert \frac{1}{n}\sum_{j=1}^{n} \mathbf{\sigma}_{j}\lVert  \mathbf{u}\rVert ^2\rvert]}_{\text {Term-\circled{3}}}.\\
		\end{aligned}
	\end{equation}
	For the $\text {Term-\circled{1}}$, note that $\lVert \mathbf{x}\rVert \leq \gamma$, we have
	\begin{equation*}
		\begin{aligned}
			\mathbb{E}_{\mathbf{\sigma}} [\lvert  \frac{1}{n}\sum_{j=1}^{n} \mathbf{\sigma}_{j}\lVert   \mathbf{x}_{j}\rVert ^2\rvert]
			& \leq \frac{ \gamma^2 }{n}[\mathbb{E}_{\mathbf{\sigma}} \lvert  \sum_{j=1}^{n} \mathbf{\sigma}_{j}\rvert].\\
		\end{aligned}
	\end{equation*}
	Here, using (\ref{In_10}), we obtain
	\begin{equation}
		\label{Term10}
		\text {Term-\circled{1}} \leq \frac{\gamma^2 }{ \sqrt{n}}.
	\end{equation}
	For the $\text {Term-\circled{2}}$,  it becomes evident that
	\begin{equation*}
		\begin{aligned}
			&\mathbb{E}_{\mathbf{\sigma}} [\sup_{\mathbf{u}\in\mathcal{H}} \lvert  \frac{1}{n}\sum_{j=1}^{n}\mathbf{\sigma}_{j}\left \langle \mathbf{x}_{j},\mathbf{u}\right \rangle \rvert] \\&=  \frac{1}{n}\mathbb{E}_{\mathbf{\sigma}} [\sup_{\mathbf{u}\in\mathcal{H}}\lvert \left \langle \sum_{j=1}^{n}\mathbf{\sigma}_{j}	 \mathbf{x}_{j},\mathbf{u}\right \rangle \rvert ]
			\\&\leq  \frac{1}{n}\mathbb{E}_{\mathbf{\sigma}} [\sup_{\mathbf{u}\in\mathcal{H}} \lVert \sum_{j=1}^{n}\mathbf{\sigma}_{j}	 \mathbf{x}_{j}\rVert  \lVert \mathbf{u}\rVert ].
		\end{aligned}
	\end{equation*}
	Here, considering $\lVert \mathbf{x}\rVert \leq \gamma$, we can easily obtain $\lVert \mathbf{u}\rVert \leq \gamma$ in the $k$ means algorithm. Referring to Inequality \,(\ref{In_10}), we have
	\begin{equation}
		\label{Term20}
		\begin{aligned}
			\text {Term-\circled{2}}&\leq  \frac{2}{n}\mathbb{E}_{\mathbf{\sigma}} [\lVert  \sum_{j=1}^{n}\mathbf{\sigma}_{j}\mathbf{x}_{j}  \rVert  \lVert \mathbf{u}\rVert ]
			\\&\leq   \frac{2}{n}\gamma  [\mathbb{E}_{\mathbf{\sigma}} [ \lVert  \sum_{j=1}^{n}\mathbf{\sigma}_{j}	 \mathbf{x}_{j} \rVert ^{2} ]]^{1/2}
			\\&\leq   \frac{2}{n}\gamma [  \sum_{j=1}^{n}
			\lVert \mathbf{x}_{j}\rVert ^{2} ]^{1/2}
			\\&\leq   \frac{2\gamma^2 }{\sqrt{n}}.
		\end{aligned}
	\end{equation}
	For  $\text {Term-\circled{3}}$, it is similar to the proof process of  $\text {Term-\circled{1}}$, then  it can be seen that 
	\begin{equation}
		\label{Term30}
		\begin{aligned}
			\text {Term-\circled{3}}&\leq  \frac{ \gamma^2 }{n} \mathbb{E}_{\mathbf{\sigma}}[\left\vert\sum_{j=1}^{n} \mathbf{\sigma}_{j}\right \vert] 
			\\&\leq  \frac{ \gamma^2 }{n}[\mathbb{E}_{\mathbf{\sigma}}[\left\vert\sum_{j=1}^{n} \mathbf{\sigma}_{j}\right \vert^{2}]]^{1/2} 
			\\&	\leq  \frac{\gamma^2}{\sqrt{n}} \ \  (\text{By Eq. (\ref{In_10})} ).
		\end{aligned}  
	\end{equation}		
	Then, according to the results from(\ref{Term10}), (\ref{Term20}) and (\ref{Term30}), we reformulate Inequality\,(\ref{R_ineqaility0}) as
	\begin{equation*}
		\begin{aligned}
			\mathfrak{R}_{n}(\mathcal{G}_{\mathbf{U}^{(s)}) } \leq \frac{4\gamma^2}{\sqrt{n}}.
		\end{aligned}
	\end{equation*}
	Finally, since $\widetilde{\mathfrak{R}}_{n}(\mathcal{G}_{\mathbf{U}^{(s)}})=\sup _{D \in \mathcal{X}^{n}} \mathfrak{R}_{n}(\mathcal{G}_{\mathbf{U}^{(s)}})$,
	we get
	\begin{equation*}
		\begin{aligned}
			\max_{s}{\widetilde{\mathfrak{R}}}_{n}(\mathcal{G}_{\mathbf{U}^{(s)}})  \leq  \frac{4\gamma^2}{\sqrt{n}}.
		\end{aligned}
	\end{equation*}
	This proves the statements in Lemma \ref{R_vallue_max0}.	
\end{proof}

\subsection{Proof of Theorem \ref{Data Reconstruction generalization error}}
\label{Data Reconstruction generalization error proof}

\begin{proof}
	For the FIC-DR algorithm, we apply the conclusion in Theorem \ref{k-means  generalization error bound} to derive:
	\begin{equation}
		\label{errorinequality}
		\begin{aligned}
			\mathcal{L}_{\mathbb{Q}_c}(\mathbf{U}_{2}) &\leq  \hat{\mathcal{L}}_{D_{c}}(\mathbf{U}_{2})+ \frac{\alpha_2+2\sqrt{2\log (1 / \delta)}}{\sqrt{n_1+n_2}}
			\\ & \quad +\frac{3\beta_2\sqrt{\log (1 / \delta)}}{\sqrt{n_1+n_2}}+\frac{6\gamma\log (1 / \delta)}{n_1+n_2}.
		\end{aligned}
	\end{equation}
	To simplify the proof expression, let 
	$$A = \frac{\alpha_2+(2\sqrt{2}+3\beta_2)\sqrt{\log (1 / \delta)}}{\sqrt{n_1+n_2}}
	+\frac{6\gamma\log (1 / \delta)}{n_1+n_2},$$ then
	Inequality (\ref{errorinequality}) can be restated as
	\begin{align}
		\mathcal{L}_{\mathbb{Q}_c}(\mathbf{U}_{2})& \leq  \hat{\mathcal{L}}_{D_{c}}(\mathbf{U}_{2})+A \nonumber \\
		&= \hat{\mathcal{L}}_{\tilde{D}_{p_{\boldsymbol{\omega}}}}(\mathbf{U}_{2})+\hat{\mathcal{L}}_{D_{c}}(\mathbf{U}_{2})-\hat{\mathcal{L}}_{\tilde{D}_{p_{\boldsymbol{\omega}}}}(\mathbf{U}_{2})+A \label{weighted empirical clustering risk}\\
		&\leq \hat{\mathcal{L}}_{\tilde{D}_{p_{\boldsymbol{\omega}}}}(\mathbf{U}_{2})+\operatorname{disc}_\mathcal{Y}(\tilde{D}_{p_{\boldsymbol{\omega}}}, D_c)+A.
		\label{disc inequality}
	\end{align}
	In Eq.\,(\ref{weighted empirical clustering risk}), we incorporate the weighted empirical clustering risk. In  Inequality (\ref{disc inequality}), considering  any $\tilde{D}_{
		p_{\boldsymbol{\omega}}}$, we utilize the definition given in Eq. (23) of the main paper to derive  the inequality: $$\hat{\mathcal{L}}_{D_{c}}(\mathbf{U}_{2})-\hat{\mathcal{L}}_{\tilde{D}_{p_{\boldsymbol{\omega}}}}(\mathbf{U}_{2})\leq \operatorname{disc}_ \mathcal{Y}(\tilde{D}_{p_{\boldsymbol{\omega}}}, D_c).$$ Then, by substituting $A$ into (\ref{disc inequality}), the proof of Theorem \ref{Data Reconstruction generalization error} is completed.
\end{proof}

\subsection{Proof of Theorem \ref{Model Resue theorem}}
\label{Model Resue theorem proof}
We first recall that the objective function of the FIC-MR algorithm is expressed as
\begin{equation}
	\label{modelresue_Ob11}
	\min _{\mathbf{U}_4 \in \mathcal{H}^k}(\underbrace{\frac{1}{n_{2}} \sum_{i=1}^n \min _{s=1, \ldots, k}\lVert \mathbf{x}_i-\mathbf{u}_s\rVert ^2}_{\text{$\hat{\mathcal{L}}_{D_c}(\mathbf{U}_4)$}}+ \theta\lVert \mathbf{U}_4^{(1)} - \mathbf{U}^{0}\rVert ^2).
\end{equation}
Noting that $\mathbf{U}_4=(\mathbf{U}_4^{(1)}, \mathbf{U}_4^{(2)})$, let us establish an upper bound for the domain of the cluster center $\mathbf{U}_4^{(1)}$. 
We define the empirical risk minimizer of Eq.\,(\ref{modelresue_Ob11}) by 
$\tilde{\mathbf{U}}_{4} =(\tilde{\mathbf{U}}_{4} ^{(1)}, \tilde{\mathbf{U}}_{4} ^{(2)})$. 
Let $\hat{\mathbf{U}}_{4}=(\mathbf{U}^{0},\tilde{\mathbf{U}}_{4}^{(2)}), 
$
incorporating the optimized cluster center $\mathbf{U}^{0}$ from the previous stage and $\tilde{\mathbf{U}}_{4}^{(2)}$. Given the optimality of the empirical objective at $\tilde{\mathbf{U}}_{4}$, we can derive 
\begin{equation*}
	\begin{aligned}
		&\hat{\mathcal{L}}_{D_c}(\tilde{\mathbf{U}}_{4})+\theta\lVert \tilde{\mathbf{U}}_{4}^{(1)}-\mathbf{U}^{0}\rVert _F^2 
		\\&\leq \hat{\mathcal{L}}_{D_c}(\hat{\mathbf{U}}_{4})+\theta\lVert \mathbf{U}^{0}-\mathbf{U}^{0}\rVert _F^2
		\\&= \hat{\mathcal{L}}_{D_c}(\hat{\mathbf{U}}_{4}).
	\end{aligned}
\end{equation*}
For the decomposition of the empirical risk on both sides of $\hat{\mathcal{L}}_{D_c}(\tilde{\mathbf{U}}_{4})+\theta\lVert \tilde{\mathbf{U}}_{4}^{(1)}-\mathbf{U}^{0}\rVert _F^2 \leq \hat{\mathcal{L}}_{D_c}(\hat{\mathbf{U}}_{4})$, we get
\begin{equation}
	\label{objective2}
	\begin{aligned}
		&\Gamma_{n_2}(\tilde{\mathbf{U}}_{4}^{(1)})+\Gamma_{n_2}(\tilde{\mathbf{U}}_{4}^{(2)})+\theta\lVert \tilde{\mathbf{U}}_{4}^{(1)}-\mathbf{U}^{0}\rVert _F^2 
		\\& \leq \Gamma_{n_2}(\mathbf{U}^{0})+\Gamma_{n_2}(\tilde{\mathbf{U}}_{4}^{(2)}),
	\end{aligned}
\end{equation}
where $\Gamma_{n_2}(\tilde{\mathbf{U}}_{4}^{(1)})$ and $\Gamma_{n_2}(\mathbf{U}^{0})$ are empirical clustering risks based on the feature matrix $\mathbf{X}_2^{(1)}$ corresponding to the cluster center $\tilde{\mathbf{U}}_{4}^{(1)}$ and $\mathbf{U}^{0}$, respectively. $\Gamma_{n_2}(\tilde{\mathbf{U}}_{4}^{(2)})$ is the empirical clustering risk based on the feature matrix $\mathbf{X}_2^{(2)}$ corresponding to the cluster center $\tilde{\mathbf{U}}_{4}^{(2)}$. By (\ref{objective2}), it is easy to 
get $\lVert \tilde{\mathbf{U}}_{4}^{(1)}-\mathbf{U}^{0}\rVert _F\leq \sqrt{\frac{\Gamma_{n_2}(\mathbf{U}^0)}{\theta}}$ and $\Gamma_{n_2}(\tilde{\mathbf{U}}_{4}^{(1)})\leq \Gamma_{n_2}(\mathbf{U}^{0})$.
Furthermore, we also have
\begin{equation*}
	\begin{aligned}
		\lVert \tilde{\mathbf{U}}_{4} ^{(1)}\rVert _F & =\lVert \tilde{\mathbf{U}}_{4} ^{(1)}-\mathbf{U}^{0}+\mathbf{U}^{0}\rVert _F \\
		& \leq\lVert \tilde{\mathbf{U}}_{4} ^{(1)}-\mathbf{U}^{0}\rVert _F+\lVert \mathbf{U}^{0}\rVert _F \\
		&\leq \sqrt{\frac{\Gamma_{n_2}(\mathbf{U}^{0})}{\theta}}+\lVert \mathbf{U}^{0}\rVert _F.
	\end{aligned}
\end{equation*}
Therefore, the $\tilde{\mathbf{U}}_{4} ^{(1)}$ resides within a bounded domain
\begin{equation}
	\label{U_bound}
	\begin{aligned}
		&\tilde{\mathbf{U}}_{4} ^{(1)}\in \mathcal{C}=\left\{\mathbf{U}_4^{(1)}\in \mathbb{R}^{d_1 \times k},\lVert \mathbf{U}_{4}^{(1)}-\mathbf{U}^{0}\rVert _F\leq \sqrt{\frac{\Gamma_{n_2}(\mathbf{U}^0)}{\theta}},\right.\\& \left.\Gamma_{n_2}(\mathbf{U}_4^{(1)}) \leq\Gamma_{n_2}(\mathbf{U}^0)\right\}.
	\end{aligned}
\end{equation}

We utilize similar proof tools for the generalization error bound of the $k$-means algorithm
to establish the proof of Theorem \ref{Model Resue theorem}. The distinction arises from the fact that the objective function considered for the FIC-MR algorithm includes additional regularization terms, leading to a different clustering Rademacher complexity. 
We begin by analyzing the clustering Rademacher complexity in the following Lemma \ref{Rademacher complexity_model resue}.
\begin{lemma}
	\label{Rademacher complexity_model resue}
	Assuming that $\lVert \mathbf{x}\rVert \leq \gamma$ for $ \forall\mathbf{x}\in \mathbb{Q}_c$, consider any sample set $D_c=\left\{\mathbf{x}_{1}, \ldots, \mathbf{x}_{n_2}\right\}$ drawn from the input space $\mathcal{X}_{c}$, there exists a positive constant $V_4$, it holds that
	$$
	\mathfrak{R}_{n_2}(\mathcal{J}_{\mathbf{U}_4}) \leq 2V_4 \varepsilon \sqrt {\frac{k}{n_2}}\log ^{2}(2n_2\sqrt{(\gamma+\gamma_4)n_2}),
	$$
	where
	$\varepsilon= \gamma(\Delta+ \sqrt{\gamma_4^{2}- (\gamma_0-\Delta)^{2}}+\frac{\gamma }{2})+\frac{ (\gamma_4' +\gamma_0 ) \Delta +\gamma_4''^2}{2}$. In the current stage, recall that any  cluster center $\mathbf{u}$ can decomposed into $\mathbf{u}=[\mathbf{u}^{(1)},\mathbf{u}^{(2)}]$. Here, we suppose 
	$ \sup_{\mathbf{u}\in\mathcal{H}}\lVert  \mathbf{u}^{(1)}-\mathbf{u}^{0}\rVert \leq \Delta$, where $\mathbf{u}^{0}$ is the pre-trained cluster center in the previous stage.
\end{lemma}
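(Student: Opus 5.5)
The plan mirrors the proof of Lemma \ref{Rademacher complexity0}: apply the vector-contraction result of Lemma \ref{Rademacher complexity_property0} to $\psi=\min$ composed with $g_{\mathbf{U}_4}$. The new ingredient is a per-coordinate Rademacher bound that uses the restricted domain (\ref{U_bound}) for the old-feature block, in place of the crude bound $\lVert\mathbf{u}_s\rVert\le\gamma$. As shown in the proof of Lemma \ref{Rademacher complexity0}, $\psi$ is $1$-Lipschitz with respect to $\lVert\cdot\rVert_\infty$, so we may take $L=1$ and $r=1/2$.

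\emph{Boundedness constant.} Since $\lVert\mathbf{x}\rVert\le\gamma$ and $\lVert\mathbf{u}_s\rVert\le\gamma_4$, we have $\max\{|\psi(g_{\mathbf{U}_4}(\mathbf{x}))|,\lVert g_{\mathbf{U}_4}(\mathbf{x})\rVert_\infty\}\le b$ with $b$ of order $\gamma+\gamma_4$, in the same loose style as (\ref{gx_sup0}).

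\emph{Coordinate class.} For each coordinate $s$, I would split $g_{\mathbf{u}}(\mathbf{x})=\lVert\mathbf{x}^{(1)}-\mathbf{u}^{(1)}\rVert^2+\lVert\mathbf{x}^{(2)}-\mathbf{u}^{(2)}\rVert^2$. Expanding each square gives the terms $\lVert\mathbf{x}\rVert^2$, $-2\langle\mathbf{x}^{(1)},\mathbf{u}^{(1)}\rangle$, $-2\langle\mathbf{x}^{(2)},\mathbf{u}^{(2)}\rangle$, $\lVert\mathbf{u}^{(1)}\rVert^2$ and $\lVert\mathbf{u}^{(2)}\rVert^2$. Each term is then bounded as in Term-\circled{1}--\circled{3} of the proof of Lemma \ref{R_vallue_max0}, using (\ref{In_10}), so that every contribution is of the form (coefficient)$/\sqrt{n_2}$.
\begin{itemize}
\item \emph{Old-feature linear term.} Write $\mathbf{u}^{(1)}=\mathbf{u}^0+(\mathbf{u}^{(1)}-\mathbf{u}^0)$. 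The part involving $\mathbf{u}^0$ is a single fixed function and contributes $\gamma\gamma_0/\sqrt{n_2}$-type terms. The deviation part is controlled by $\Delta$, giving $\gamma\Delta$.
\item \emph{New-feature linear term.} This is where $\sqrt{\gamma_4^2-(\gamma_0-\Delta)^2}$ enters. Since $\lVert\mathbf{u}^{(1)}\rVert\ge\gamma_0-\Delta$ and $\lVert\mathbf{u}\rVert^2=\lVert\mathbf{u}^{(1)}\rVert^2+\lVert\mathbf{u}^{(2)}\rVert^2\le\gamma_4^2$, we get $\lVert\mathbf{u}^{(2)}\rVert\le\sqrt{\gamma_4^2-(\gamma_0-\Delta)^2}$. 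Cauchy--Schwarz then yields $\gamma\sqrt{\gamma_4^2-(\gamma_0-\Delta)^2}$.
\item \emph{Quadratic center terms.} Use $\lVert\mathbf{u}^{(1)}\rVert^2-\lVert\mathbf{u}^0\rVert^2=\langle\mathbf{u}^{(1)}-\mathbf{u}^0,\mathbf{u}^{(1)}+\mathbf{u}^0\rangle\le\Delta(\gamma_4'+\gamma_0)$, while the fixed $\lVert\mathbf{u}^0\rVert^2$ piece is a constant shift. The new-feature quadratic term is bounded by $\gamma_4''^2$.
\item \emph{Data term.} The $\lVert\mathbf{x}\rVert^2$ term gives $\gamma^2$.
\end{itemize}
Collecting these, and tracking the factor $2$ from the linear terms, should produce $\max_s\widetilde{\mathfrak{R}}_{n_2}\le 2\varepsilon/\sqrt{n_2}$ with the stated $\varepsilon$. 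The factor $\tfrac12$ on $\gamma^2$ and on the quadratic terms comes from normalizing the sum by $2$.

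\emph{Logarithmic factor.} As in (\ref{max_R_lowerbound0}), I would lower-bound $\max_s\widetilde{\mathfrak{R}}_{n_2}$ via (\ref{In_20}) by $\sqrt{2u}/(2\sqrt{n_2})$, where $u\lesssim\gamma+\gamma_4$. Then $bn_2/\max_s\widetilde{\mathfrak{R}}_{n_2}\le 2n_2\sqrt{(\gamma+\gamma_4)n_2}$. Substituting both bounds into Lemma \ref{Rademacher complexity_property0} gives the claimed bound.

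The main obstacle is the constant bookkeeping in the coordinate-class step. Two points need care: centering at $\mathbf{u}^0$ must be legitimate (a fixed additive function inside $|\cdot|$ still costs only $O(1/\sqrt{n_2})$), and the lower bound $\lVert\mathbf{u}^{(1)}\rVert\ge\gamma_0-\Delta$ requires reading $\gamma_0$ as $\lVert\mathbf{u}^0\rVert$ and assuming $\gamma_0\ge\Delta$. I would state that assumption explicitly and accept slack in the numerical constants, which are absorbed into $V_4$.
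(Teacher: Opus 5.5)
Your skeleton matches the paper's: Lemma \ref{Rademacher complexity_property0} with $L=1$, $r=1/2$, plus a per-coordinate bound built from the old/new split, centering at $\mathbf{u}^0$, $\lVert\mathbf{u}^{(2)}\rVert\le\sqrt{\gamma_4^2-(\gamma_0-\Delta)^2}$, and the difference of squares for $\lVert\mathbf{u}^{(1)}\rVert^2$. The gap is the sentence claiming that collecting terms ``should produce $2\varepsilon/\sqrt{n_2}$ with the stated $\varepsilon$''. Your own, correct, accounting contradicts it. Keeping the fixed-center pieces inside $\lvert\cdot\rvert$ costs $\frac{2}{n_2}\mathbb{E}_{\boldsymbol{\sigma}}\lvert\langle\sum_j\sigma_j\mathbf{x}_j^{(1)},\mathbf{u}^0\rangle\rvert\le 2\gamma\gamma_0/\sqrt{n_2}$ in the linear term and $\lVert\mathbf{u}^0\rVert^2\,\mathbb{E}_{\boldsymbol{\sigma}}\lvert\frac{1}{n_2}\sum_j\sigma_j\rvert\le\gamma_0^2/\sqrt{n_2}$ in the quadratic term. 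So what you actually obtain is $\max_s\widetilde{\mathfrak{R}}_{n_2}\le(2\varepsilon+2\gamma\gamma_0+\gamma_0^2)/\sqrt{n_2}$. The extra terms are not numerical constants, so they cannot be ``absorbed into $V_4$''. They are also genuinely present. Take all $\mathbf{x}_j=\mathbf{0}$, $\Delta=0$ and $\gamma_4^2=\gamma_0^2+\gamma_4''^2$. Then $2\varepsilon=(\gamma+\gamma_4'')^2$, while by (\ref{In_20}) the coordinate complexity is at least $\gamma_0^2/\sqrt{2n_2}$, which is larger once $\gamma_0\ge2(\gamma+\gamma_4'')$. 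The paper's proof of Lemma \ref{R_vallue_max1} reaches $\varepsilon$ only by writing $\mathbf{u}^{(1)}\to\mathbf{u}^{(1)}-\mathbf{u}^0$ and $\lVert\mathbf{u}^{(1)}\rVert^2\to\lVert\mathbf{u}^{(1)}\rVert^2-\lVert\mathbf{u}^0\rVert^2$ as equalities inside the absolute value, i.e., by dropping exactly these terms. That step would be legitimate only for the Rademacher complexity without absolute value, where $\mathbf{u}$-independent additive terms vanish because $\mathbb{E}\sigma_j=0$. To close your argument you need an extra hypothesis such as $\gamma_0\le\gamma$, which is natural if $\mathbf{u}^0$ is a $k$-means centroid of old-feature data in the $\gamma$-ball. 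Then $2\gamma\gamma_0+\gamma_0^2\le3\gamma^2\le6\varepsilon$, the slack really is a factor $4$, and it can go into $V_4$.

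A second, independent problem is the logarithmic factor, where the inequality runs the wrong way. To bound $bn_2/\max_s\widetilde{\mathfrak{R}}_{n_2}$ from above you need a \emph{lower} bound on $\max_s\widetilde{\mathfrak{R}}_{n_2}$, hence a lower bound on $u=\sup_{\mathbf{x},\mathbf{u}}\lVert\mathbf{x}-\mathbf{u}\rVert^2$. The estimate $u\lesssim\gamma+\gamma_4$ is an upper bound and gives nothing. Two further errors compound this. First, (\ref{In_20}) with $\beta_j\equiv g_{\mathbf{u}}(\mathbf{x})/n_2$ gives $u/\sqrt{2n_2}$, not $\sqrt{2u}/(2\sqrt{n_2})$. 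Second, a valid envelope for Lemma \ref{Rademacher complexity_property0} is $b=(\gamma+\gamma_4)^2$, not a multiple of $\gamma+\gamma_4$. The paper's proof makes the same moves, so mirroring it does not rescue the step. A correct route needs no lower bound at all. The map $t\mapsto t\log^2(A/t)$ is increasing on $(0,Ae^{-2}]$, so you can insert your upper bound $R\ge\max_s\widetilde{\mathfrak{R}}_{n_2}$ in both the prefactor and the logarithm. This gives $\mathfrak{R}_{n_2}(\mathcal{J}_{\mathbf{U}_4})\le V_4\sqrt{k}\,R\log^2(bn_2/R)$ whenever $R\le bn_2e^{-2}$. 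That bound has the same $\tilde{\mathcal{O}}(\varepsilon\sqrt{k/n_2})$ shape, but it reproduces the literal argument $2n_2\sqrt{(\gamma+\gamma_4)n_2}$ only under an additional scale condition relating $\varepsilon$ to $\gamma+\gamma_4$.
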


\noindent The proof of Lemma  \ref{Rademacher complexity_model resue} is presented in Section \ref{Rademacher complexity_model resue section}.

\begin{proof}[\textbf{Proof of Theorem \ref{Model Resue theorem} }]
	We utilize $\mathfrak{R}_{n_2}(\mathcal{J}_{\mathbf{U}_4}) $ to bound the expected CRC as follows
	\begin{equation}
		\label{R_expecte_dvalue_bound}
		\begin{aligned}
			\mathfrak{R}(\mathcal{J}_{\mathbf{U}_4})&=\mathbb{E}[\mathfrak{R}_{n_2}(\mathcal{J}_{\mathbf{U}_4})]\\
			&\leq \mathbb{E}[2V \varepsilon \sqrt {\frac{k}{n_2}}\log ^{2}(2 n_2\sqrt{(\gamma+\gamma_4)n_2})]\\
			& = 2V\sqrt {\frac{k}{n_2}}\log ^{2}(2 n_2\sqrt{(\gamma+\gamma_4)n_2})\mathbb{E} [ \varepsilon ],
		\end{aligned}
	\end{equation}
	where $\varepsilon= \gamma(\Delta+ \sqrt{\gamma_4^{2}- (\gamma_0-\Delta)^{2}}+\frac{\gamma }{2})+\frac{ (\gamma_4' +\gamma_0 ) \Delta +\gamma_4''^2}{2}$.
	For  $\mathbb{E} [ \varepsilon ]$, let
	$\mathbb{E} [ \varepsilon ] = B_1+B_2$, where 
	$ B_1 =\mathbb{E} [ \gamma(\Delta+ \sqrt{\gamma_4^{2}- (\gamma_0-\Delta)^{2}})+\frac{\gamma}{2}] $ and 
	$ B_2 = \mathbb{E} [ \frac{ (\gamma_4' +\gamma_0 ) \Delta +\gamma_4''^2}{2}]. $
	It is easy to obtain
	\begin{equation*}
		\begin{aligned}
			B_1 &= \mathbb{E} [ \frac{ (\gamma_4' +\gamma_0 ) \Delta +\gamma_4''^2}{2}] \\
			&=\gamma( \mathbb{E}[ \Delta]+ \mathbb{E}[ \sqrt{\gamma_4^{2}- (\gamma_0-\Delta)^{2}}])+\frac{\gamma}{2}.
		\end{aligned}
	\end{equation*}
	Notice that the square-root function is concave, by applying the Jensen’s inequality, we have 
	\begin{equation}
		\begin{aligned}
			&\mathbb{E}[ \sqrt{\gamma_4^{2}- (\gamma_0-\Delta)^{2}}] \\
			&\leq \sqrt{\gamma_4^{2}-\mathbb{E}[ (\gamma_0-\Delta)^{2}]}  \\
			& \leq \sqrt{\gamma_4^{2}-(\mathbb{E}[ \gamma_0-\Delta])^{2}}  \\
			&= \sqrt{\gamma_4^{2}-(\gamma_0-\mathbb{E}[\Delta])^{2}}.
		\end{aligned}
	\end{equation}
	Thus, it holds
	\begin{equation*}
		\begin{aligned}
			&B_1 \leq  \gamma( \mathbb{E}[ \Delta]+ \sqrt{\gamma_4^{2}-(\gamma_0-\mathbb{E}[\Delta])^{2}}  )+\frac{\gamma}{2}.
		\end{aligned}
	\end{equation*}
	Besides, we have
	\begin{equation*}
		\begin{aligned}
			B_2 & = \mathbb{E} [ \frac{ (\gamma_4' +\gamma_0 ) \Delta +\gamma_4''^2}{2}]\\
			& = \frac{\gamma_4' +\gamma_0}{2}\mathbb{E} [ \Delta ]+\frac{\gamma_4''^2}{2}.
		\end{aligned}
	\end{equation*}		
	According to $\lVert \mathbf{U}_{4}^{(1)}-\mathbf{U}^{0}\rVert _F\leq \sqrt{\frac{\Gamma_{n_2}(\mathbf{U}^0)}{\theta}}$  in (\ref{U_bound})
	and 
	$ \sup_{\mathbf{u}\in\mathcal{H}}\lVert  \mathbf{u}^{(1)}-\mathbf{u}^{0}\rVert \leq \Delta$, there is
	\begin{equation*}
		\label{expect_delta}
		\begin{aligned}
			\mathbb{E}[ \Delta] \leq& \mathbb{E}[  \sqrt{\frac{\Gamma_{n_2}(\mathbf{U}^0)}{\theta}}] 
			\leq \sqrt{ \frac{\mathbb{E}[ \Gamma_{n_2}(\mathbf{U}^0)] }{\theta}}
			=  \sqrt{\frac{\Gamma(\mathbf{U}^0)  }{\theta}}.
		\end{aligned}
	\end{equation*}
	where $ \Gamma(\mathbf{U}^0) = \mathbb{E}[ \Gamma_{n_2}(\mathbf{U}^0)] $.
	Based on the above analysis, 
	we can conclude that
	\begin{equation}
		\label{R_exp_inequality1}
		\begin{aligned}
			\mathfrak{R}(\mathcal{J}_{\mathbf{U}}) &\leq B_1+B_2
			\leq C[\gamma (\sqrt{\varepsilon_1} + 	\sqrt{\varepsilon_2}+\frac{\gamma}{2})+\varepsilon_3] ,
		\end{aligned}
	\end{equation}
	where $\varepsilon_1= \sqrt{\frac{\Gamma(\mathbf{U}^0) }{\theta}}$,
	$\varepsilon_2 = \gamma_4^{2}-(\gamma_0-\varepsilon_1)^{2}$, $\varepsilon_3 = \frac{\gamma_4' +\gamma_0}{2}\varepsilon_1+\frac{\gamma_4''^2}{2}$ and  $C = 2V_4\sqrt {\frac{k}{n_2}}\log ^{2}(2n_2 \sqrt{(\gamma+\gamma_4)n_2})$.
	Next, by using Lemma \ref{Generalization error 0}, we substitute Inequality  (\ref{R_exp_inequality1}) into Inequality (\ref{generalization error inequality}) of the model reuse framework. Consequently, Inequality (25) of the main paper can be written as
	\begin{equation*}
		\begin{aligned}
			\mathcal{L}_{\mathbb{Q}_{c}}(\mathbf{U}_4)\leq &\hat{\mathcal{L}}_{D_c}(\mathbf{U}_4) 
			+\frac{\alpha_4+3\beta_4\sqrt{\log (1 / \delta)}}{\sqrt{n_2}}+\frac{6\gamma\log (1 / \delta)}{n_2},
		\end{aligned}
	\end{equation*}
	where $\alpha_4 =4V_4\varepsilon \sqrt {k}\log ^{2}(2n_2\sqrt{(\gamma+\gamma_4) n_2})$, $\varepsilon= \gamma(\sqrt{\frac{\Gamma(\mathbf{U}^0)  }{\theta}}+ \sqrt{\gamma_4^{2}- (\gamma_0-\sqrt{\frac{\Gamma(\mathbf{U}^0)  }{\theta}})^{2}}+\frac{\gamma }{2})+\frac{ (\gamma_4' +\gamma_0 ) \sqrt{\frac{\Gamma(\mathbf{U}^0)  }{\theta}} +\gamma_4''^2}{2}$, and
	$\beta_4 = \sqrt{(\lambda_4+4\alpha_4)\gamma}.$ 
	Similarly, we can derive the definition of  $\lambda_4=\mathbb{E}_{\mathbf{x}\sim \mathbb{Q}_c} [\sup _{g_{\mathbf{U}_4}\in\mathcal{G}_{\mathbf{U}_4}}\hat{\mathcal{L}}_{D_c}(\mathbf{U}_4) ]$ using Lemma \ref{Generalization error 0}  within the model reuse framework.
	We further have
	\begin{equation*}
		\begin{aligned}
			\lambda_4
			& \leq  \mathbb{E}_{\mathbf{x}\sim \mathbb{Q}}[\hat{\mathcal{L}}_{D_c}(\hat{\mathbf{U}}_4) ] = \mathcal{L}_{Q_c}(\hat{\mathbf{U}}_4),
		\end{aligned}
	\end{equation*}
	where $\hat{\mathbf{U}}_{4}=(\mathbf{U}^{0},\tilde{\mathbf{U}}_{4}^{(2)})$.
	Thus, we can replace $\lambda_4$ with $\mathcal{L}_{Q_c}(\hat{\mathbf{U}}_4)$ to obtain $\beta_4 = \sqrt{(\mathcal{L}_{Q_c}(\hat{\mathbf{U}}_4)+4\alpha_4)\gamma}$.
	Then,  the proof of Theorem \ref{Model Resue theorem} is finished.
\end{proof}

\subsection{Proof of Lemma \ref{Rademacher complexity_model resue}}
\label{Rademacher complexity_model resue section}
To prove Lemma \ref{Rademacher complexity_model resue}, we give the following Lemma \ref{R_vallue_max1}. 
\begin{lemma}
	\label{R_vallue_max1}
	Assuming that $\lVert \mathbf{x}\rVert \leq \gamma$ for $ \forall\mathbf{x}\in \mathcal{X}_c$, consider any sample set $D_c=\left\{\mathbf{x}_{1}, \ldots, \mathbf{x}_{n_2}\right\}$ drawn from the input space $\mathcal{X}_c$ and $\mathbf{U}_4\in \mathcal{H}^{k}$. We let $\widetilde{\mathfrak{R}}_{n}(\mathcal{G}_{\mathbf{U}_{4}^{(s)}})=\sup _{D \in \mathcal{X}} \mathfrak{R}_{n}(\mathcal{G}_{\mathbf{U}_{4}^{(s)}})$,
	Then, we obtain
	\begin{equation}
		\label{max_R_unper bound1}
		\begin{aligned}
			&\max_{s}{\widetilde{\mathfrak{R}}}_{n_2}(\mathcal{G}_{\mathbf{U}_{4}^{(s)}})  \leq
			\frac{2\varepsilon}{\sqrt{n_2}},\\
		\end{aligned}
	\end{equation}
	where $\varepsilon= \gamma(\Delta+ \sqrt{\gamma_4^{2}- (\gamma_0-\Delta)^{2}}+\frac{\gamma }{2})+\frac{ (\gamma_4' +\gamma_0 ) \Delta +\gamma_4''^2}{2}$ and  $\mathbf{U}_{4}^{(s)}$ is the output coordinate $s$ of $\mathbf{U}_4$.  
\end{lemma}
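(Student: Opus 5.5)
I would mirror the proof of Lemma \ref{R_vallue_max0}, but use the finer geometry of the FIC-MR centers instead of the crude bound $\lVert\mathbf{u}\rVert\le\gamma$.

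For fixed $D_c$ and $s$, write $\mathbf{u}=[\mathbf{u}^{(1)},\mathbf{u}^{(2)}]$ and $\mathbf{x}_j=[\mathbf{x}_j^{(1)},\mathbf{x}_j^{(2)}]$. Expand
\[
\lVert\mathbf{x}_j-\mathbf{u}\rVert^2=\lVert\mathbf{x}_j\rVert^2-2\langle\mathbf{x}_j,\mathbf{u}\rangle+\lVert\mathbf{u}\rVert^2 .
\]
Using subadditivity of the supremum, split $\mathfrak{R}_{n_2}(\mathcal{G}_{\mathbf{U}_4^{(s)}})$ into three terms, exactly as in (\ref{R_ineqaility0}).

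\textbf{Term-\circled{1}.} As in (\ref{Term10}), (\ref{In_10}) gives at most $\gamma^2/\sqrt{n_2}$, which contributes the $\frac{\gamma}{2}\cdot\gamma$ part of $2\varepsilon$.

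\textbf{Term-\circled{2}.} Bound $\sup_{\mathbf{u}}\lvert\langle\sum_j\sigma_j\mathbf{x}_j,\mathbf{u}\rangle\rvert\le\lVert\sum_j\sigma_j\mathbf{x}_j\rVert\sup_{\mathbf{u}}\lVert\mathbf{u}\rVert$. Then bound $\sup\lVert\mathbf{u}\rVert$ geometrically.
\begin{itemize}
\item Decompose $\mathbf{u}^{(1)}=\mathbf{u}^0+(\mathbf{u}^{(1)}-\mathbf{u}^0)$ with $\lVert\mathbf{u}^{(1)}-\mathbf{u}^0\rVert\le\Delta$ and $\lVert\mathbf{u}^0\rVert\le\gamma_0$. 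This gives $\lVert\mathbf{u}^{(1)}\rVert\ge\gamma_0-\Delta$.
\item Combined with $\lVert\mathbf{u}\rVert\le\gamma_4$, Pythagoras gives $\lVert\mathbf{u}^{(2)}\rVert\le\sqrt{\gamma_4^2-(\gamma_0-\Delta)^2}$.
\item Control the $\mathbf{u}^{(1)}$-part through the deviation $\Delta$, since the $\mathbf{u}^0$ contribution is a fixed vector and can be absorbed.
\end{itemize}
With (\ref{In_10}), this should give Term-\circled{2} $\le 2\gamma(\Delta+\sqrt{\gamma_4^2-(\gamma_0-\Delta)^2})/\sqrt{n_2}$.

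\textbf{Term-\circled{3}.} Write $\lVert\mathbf{u}\rVert^2=\lVert\mathbf{u}^{(1)}\rVert^2+\lVert\mathbf{u}^{(2)}\rVert^2$ and use
\[
\lVert\mathbf{u}^{(1)}\rVert^2-\lVert\mathbf{u}^0\rVert^2=\langle\mathbf{u}^{(1)}-\mathbf{u}^0,\mathbf{u}^{(1)}+\mathbf{u}^0\rangle\le(\gamma_4'+\gamma_0)\Delta .
\]
Also $\lVert\mathbf{u}^{(2)}\rVert^2\le\gamma_4''^2$. The fixed $\lVert\mathbf{u}^0\rVert^2$ piece does not depend on the supremum variable beyond a constant, so I would recentre it away. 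Multiplying by $\mathbb{E}\lvert\sum_j\sigma_j\rvert/n_2\le1/\sqrt{n_2}$ yields $((\gamma_4'+\gamma_0)\Delta+\gamma_4''^2)/\sqrt{n_2}$, which is twice the last summand of $\varepsilon$.

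Summing the three terms gives the bound $2\varepsilon/\sqrt{n_2}$ for every $D_c$ and every $s$. Taking the supremum over samples and the maximum over $s$ finishes the proof.

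\textbf{Main obstacle.} The hard part is legitimately discarding the fixed $\mathbf{u}^0$ contributions in Terms \circled{2} and \circled{3}. A constant function does not vanish under $\mathbb{E}_{\boldsymbol\sigma}\lvert\cdot\rvert$: it contributes $\lvert c\rvert\,\mathbb{E}\lvert\sum_j\sigma_j\rvert/n_2$. I would try first to absorb these pieces into the $\gamma\Delta$ and $\gamma^2/2$ budgets, adjusting constants if necessary. I would also check the Pythagorean step, which requires $\gamma_0\ge\Delta$.
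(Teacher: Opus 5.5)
Your plan is the paper's plan: the three-term split of (\ref{R_ineqaility1}), the bound $\lVert\mathbf u^{(2)}\rVert\le\sqrt{\gamma_4^2-(\gamma_0-\Delta)^2}$, and the identity $\lVert\mathbf u^{(1)}\rVert^2-\lVert\mathbf u^0\rVert^2=\langle\mathbf u^{(1)}-\mathbf u^0,\mathbf u^{(1)}+\mathbf u^0\rangle$. The gap is exactly the step you flagged. Under the absolute-value definition (\ref{R_valueF}), the $\mathbf u^0$ contributions can be neither discarded nor absorbed into the $\gamma\Delta$ and $\gamma^2/2$ budgets, because they do not shrink with $\Delta$.

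The $\mathbf u^0$-piece of Term-\circled{2} is $\frac{2}{n_2}\mathbb E_{\boldsymbol\sigma}\lvert\langle\sum_j\sigma_j\mathbf x_j^{(1)},\mathbf u^0\rangle\rvert$, which is of order $\gamma\gamma_0/\sqrt{n_2}$ for data aligned with $\mathbf u^0$. Term-\circled{3} equals $\frac{1}{n_2}\mathbb E_{\boldsymbol\sigma}\lvert\sum_j\sigma_j\rvert\cdot\sup_{\mathbf u}\lVert\mathbf u\rVert^2$, which is at least $(\gamma_0-\Delta)^2/\sqrt{2n_2}$ when $\Delta\le\gamma_0$.

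In fact the inequality is false as stated. Take $\Delta=0$ and $\gamma_4=\gamma_0=\gamma$; this is not exotic, since a $k$-means center has norm at most $\gamma$. Then $\mathbf u^{(1)}=\mathbf u^0$ forces $\mathbf u^{(2)}=\mathbf 0$, so $\mathcal G_{\mathbf U_4^{(s)}}$ is the single function $\mathbf x\mapsto\lVert\mathbf x-[\mathbf u^0,\mathbf 0]\rVert^2$. Meanwhile $2\varepsilon=\gamma^2+\gamma_4''^2\le 2\gamma^2$. Put every $\mathbf x_j=[-\mathbf u^0,\mathbf 0]$ (norm $\gamma$) and use (\ref{In_20}):
\[
\widetilde{\mathfrak R}_{n_2}(\mathcal G_{\mathbf U_4^{(s)}})\;\ge\;\frac{4\gamma^2}{n_2}\,\mathbb E_{\boldsymbol\sigma}\Bigl\lvert\sum_{j=1}^{n_2}\sigma_j\Bigr\rvert\;\ge\;\frac{2\sqrt2\,\gamma^2}{\sqrt{n_2}}\;>\;\frac{2\varepsilon}{\sqrt{n_2}}
\]
This holds for every $n_2$, so no cleverer absorption argument exists.

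The paper's proof has the same hole. In (\ref{term11}) it replaces $\mathbf u^{(1)}$ by $\mathbf u^{(1)}-\mathbf u^0$ under an equality sign, and in (\ref{Term3}) it inserts $-\lVert\mathbf u^0\rVert^2$ the same way. Both steps silently drop the terms you were worried about.

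What your decomposition honestly proves is the bound with $\varepsilon$ replaced by $\varepsilon+\gamma\gamma_0+\gamma_0^2/2$. To get it, keep $\lVert\mathbf u^{(1)}\rVert\le\gamma_0+\Delta$ in Term-\circled{2} and $\lVert\mathbf u\rVert^2\le\gamma_0^2+(\gamma_4'+\gamma_0)\Delta+\gamma_4''^2$ in Term-\circled{3}.

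Your recentring instinct is correct only for the signed complexity $\mathbb E_{\boldsymbol\sigma}\sup_{g}\frac{1}{n_2}\sum_j\sigma_j g(\mathbf x_j)$. There you can write
\[
g_{\mathbf u}(\mathbf x)=\lVert\mathbf x-[\mathbf u^0,\mathbf 0]\rVert^2-2\langle\mathbf x,[\mathbf u^{(1)}-\mathbf u^0,\mathbf u^{(2)}]\rangle+\lVert\mathbf u\rVert^2-\gamma_0^2 .
\]
The fixed first term has zero Rademacher mean and drops out, and your estimates then give even $(2\varepsilon-\gamma^2)/\sqrt{n_2}$. But that is a change of Definition (\ref{R_valueF}), not a proof of the lemma as written.

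Your side remark is also right: $\lVert\mathbf u^{(1)}\rVert^2\ge(\gamma_0-\Delta)^2$ requires $\Delta\le\gamma_0$, which the paper uses without stating it.
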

\noindent We provide the proof of Lemma \ref{R_vallue_max1} in \ref{SectionR_vallue_max1proof}.	

\noindent  
\begin{proof} [\textbf{Proof of Lemma \ref{Rademacher complexity_model resue}}]
	Next, using Lemma \ref{Rademacher complexity_property0} and Lemma \ref{R_vallue_max1}, we proceed to provide a comprehensive proof of Lemma \ref{Rademacher complexity_model resue}. Let $\psi_4(\mathbf{q})$  be the minimum function in the framework of FIC-MR algorithm. By utilizing a similar proof process as in (\ref{psi_value_max}), we can prove that
	the function $\psi_4(\mathbf{q})$ is 1-Lipschitz continuous w.r.t. the $L_{\infty}$-norm, i.e.,
	$\forall \mathbf{q}, \mathbf{q}^{\prime} \in \mathbb{R}^{k},\lvert \psi_4(\mathbf{q})-\psi_4(\mathbf{q}^{\prime})\rvert \leq\lVert \mathbf{q}-\mathbf{q}^{\prime}\rVert _{\infty}$. 
	Similarly, note that 	$g_{\mathbf{U}_4}(\mathbf{x})=(g_{\mathbf{u}_{1}}(\mathbf{x}),\ldots,g_{\mathbf{u}_{k}}(\mathbf{x}))$ with $g_{\mathbf{u}_{s}}(\mathbf{x})=\lVert  \mathbf{x}-\mathbf{u}_{s} \rVert ^2, \lVert \mathbf{x}\rVert  \leq \gamma$. Let $\lVert \mathbf{u}_{s}\rVert  \leq \gamma_4 \ \text{with}\  s=1,\ldots,k$  in the FIC-MR algorithm, then we have
	\begin{equation}
		\label{gx_sup}
		\begin{aligned}
			&g_{\mathbf{u}_{s}}(\mathbf{x})=\lVert  \mathbf{x}-\mathbf{u}_{s} \rVert ^2\leq 2( \lVert \mathbf{x}\rVert +\lVert \mathbf{u}_{s}\rVert ) \leq 2(\gamma+\gamma_4).
		\end{aligned}
	\end{equation}
	Hence, it is easy to get
	\begin{equation*}
		\lvert \psi_4(g_{\mathbf{U}}(\mathbf{x}))\rvert=
		\lvert \min(g_{\mathbf{u}_{1}},\ldots,g_{\mathbf{u}_{k}})\rvert \leq 2(\gamma+\gamma_4)
	\end{equation*}
	and 
	\begin{equation*}
		\lVert g_{\mathbf{U}_4}(\mathbf{x})\rVert _{\infty}=\max _{s=1,\ldots,k}\lvert g_{\mathbf{u}_{s}}(\mathbf{x})\rvert \leq 2(\gamma+\gamma_4).
	\end{equation*}
	According to the above analysis, one can see that the function $\psi_4(\mathbf{q})$  is 1-Lipschitz continuous w.r.t. the $L_{\infty}$-norm and $\max \left\{\lvert \psi_4(g_{\mathbf{U}_4}(\mathbf{x}))\rvert,\lVert g_{\mathbf{U}_4}(\mathbf{x})\rVert _{\infty}\right\} \leq 2(\gamma+\gamma_4)$. Therefore, leveraging  Lemma \ref{Rademacher complexity_property0} with $L=1$,  $b= 2(\gamma+\gamma_4)$ , and $r=1 / 2$, there exists a constant $V_4$, then we can achieve
	\begin{equation}
		\label{R_value_inequality}
		\mathfrak{R}_{n_2}(\mathcal{J}_{\mathbf{U}_4}) \leq V_4 \sqrt{k} \max _{s} \widetilde{\mathfrak{R}}_{n_2}(\mathcal{G}_{\mathbf{U}_{4}^{(s)}}) \log ^{2}(\frac{2n_2(\gamma+\gamma_4)}{\max _{s} \widetilde{\mathfrak{R}}_{n_2}(\mathcal{G}_{\mathbf{U}_{4}^{(s)}})}),
	\end{equation} 
	where $\mathcal{G}_{\mathbf{U}_{4}^{(s)}}$ is a family of the output coordinate $s$ of $\mathcal{G}_{\mathbf{U}_4}$,
	$\widetilde{\mathfrak{R}}_{n_2}(\mathcal{G}_{\mathbf{U}_{4}^{(s)}})=\sup _{D_c \in \mathcal{X}_{c}^{n_2}} \mathfrak{R}_{n}(\mathcal{G}_{\mathbf{U}_{4}^{(s)}})$.
	
	\noindent By utilizing a similar proof process as in (\ref{R_value0}), we have
	\begin{equation*}
		\begin{aligned}
			\widetilde{\mathfrak{R}}_{n_2}(\mathcal{G}_{\mathbf{U}_{4}^{(s)}}) 
			&	=\sup _{D_c \in \mathcal{X}_{c}^{n_2}} \mathfrak{R}_{n_2}(\mathcal{G}_{\mathbf{U}_{4}^{(s)}})\\
			&	\geq  \frac{\sqrt{2}}{2\sqrt{n_2}}\sup _{\mathbf{x} \in \mathcal{X}, g_{\mathbf{u}} \in \mathcal{G}_{\mathbf{U}_{4}^{(s)}}}( \lvert g_{\mathbf{u}}(\mathbf{x})\rvert)^{1/2}.
		\end{aligned}
	\end{equation*}
	Denote
	$u_{s}=\sup _{\mathbf{x} \in \mathcal{X}, g_{\mathbf{u}} \in \mathcal{G}_{\mathbf{U}_{4}^{(s)}}}\lvert g_{\mathbf{u}}(\mathbf{x})\rvert;$ $u=\max \left\{u_{s}, s=1, \ldots, k\right\}.$
	It is easy to obtain
	$
	\widetilde{\mathfrak{R}}_{n}(\mathcal{G}_{\mathbf{U}_{4}^{(s)}}) \geq\frac{\sqrt{2u_{s}}}{2\sqrt{n_2}}.
	$
	Then, there is  $	\max _{s} \widetilde{\mathfrak{R}}_{n_2}(\mathcal{G}_{\mathbf{U}_{4}^{(s)}})\geq \frac{\sqrt{2u}}{2\sqrt{n_2}}$.
	Using Eq.\,(\ref{gx_sup}), we know $\max _{s}\lvert g_{\mathbf{u}_{s}}(\mathbf{x})\rvert \leq 2(\gamma+\gamma_4)$. Hence, one can see that $u\leq 2(\gamma+\gamma_4)$. 
	Then, there is
	\begin{equation*}
		\max _{s} \widetilde{\mathfrak{R}}_{n_2}(\mathcal{G}_{\mathbf{U}_{4}^{(s)}}) \geq\sqrt{\frac{(\gamma+\gamma_4)}{n_2}}.
	\end{equation*}
	So, the following inequality holds
	\begin{equation}
		\label{max_R_lowerbound}
		\frac{2n_2(\gamma+\gamma_4)}{\max _{s} \widetilde{\mathfrak{R}}_{n_2}(\mathcal{G}_{\mathbf{U}_{4}^{(s)}})} \leq 
		2 n_2\sqrt{(\gamma+\gamma_4)n_2}.
	\end{equation}
	We substitute (\ref{max_R_lowerbound}) and (\ref{max_R_unper bound1}) into (\ref{R_value_inequality}) to conclude the proof of Lemma \ref{Rademacher complexity_model resue}.
\end{proof}		

\subsection{Proof of Lemma \ref{R_vallue_max1}}
\label{SectionR_vallue_max1proof}
\begin{proof}
	According to Inequality (\ref{R_ineqaility0}), for any sample set $D_c$ and $\mathbf{U}_4\in \mathcal{H}^{k}$, it is easy to 
	\begin{equation}
		\label{R_ineqaility1}
		\begin{aligned}
			\mathfrak{R}_{n_2}(\mathcal{G}_{\mathbf{U}_{4}^{(s)}})&\leq  \underbrace{\mathbb{E}_{\mathbf{\sigma}} [\lvert \frac{1}{n_2} \sum_{j=1}^{n} \mathbf{\sigma}_{j}\lVert   \mathbf{x}_{j}\rVert ^2\rvert]}_{\text {Term-\circled{1}}}
			\\&\quad+\underbrace{2\mathbb{E}_{\mathbf{\sigma}} [\sup_{\mathbf{u}\in\mathcal{H}} \lvert \frac{1}{n_2}\sum_{j=1}^{n}\mathbf{\sigma}_{j}\left \langle \mathbf{x}_{j},\mathbf{u}\right \rangle \rvert ]}_{\text {Term-\circled{2}}}	
			\\&\quad
			+\underbrace{\mathbb{E}_{\mathbf{\sigma}} [\sup_{\mathbf{u}\in\mathcal{H}}\lvert \frac{1}{n_2}\sum_{j=1}^{n} \mathbf{\sigma}_{j}\lVert  \mathbf{u}\rVert ^2\rvert]}_{\text {Term-\circled{3}}}.\\
		\end{aligned}
	\end{equation}
	For the Term $\circled{1}$, employing the same proof process as in (\ref{Term10}), we also obtain
	\begin{equation}
		\label{Term1}
		\text {Term-\circled{1}} \leq  \frac{\gamma^2} {\sqrt{n_2}}.
	\end{equation}
	For the $\text {Term-\circled{2}}$, as the cluster center $\mathbf{u}$ and the data point $\mathbf{x}_{j} $ can be decomposed into $\mathbf{u}=[\mathbf{u}^{(1)},\mathbf{u}^{(2)}]$ and $\mathbf{x}_{j}=[\mathbf{x}_{j}^{(1)},\mathbf{x}_{j}^{(2)}]$, respectively, considering the $\text {Term-\circled{2}}$ of (\ref{R_ineqaility1}), it becomes evident that
	\begin{equation}
		\label{term11}
		\begin{aligned}
			&\mathbb{E}_{\mathbf{\sigma}} [\sup_{\mathbf{u}\in\mathcal{H}} \lvert \frac{1}{n_2}\sum_{j=1}^{n}\mathbf{\sigma}_{j}\left \langle \mathbf{x}_{j},\mathbf{u}\right \rangle \rvert] \\& = \mathbb{E}_{\mathbf{\sigma}} [\sup_{\mathbf{u}\in\mathcal{H}}\lvert \frac{1}{n_2}\left \langle \sum_{j=1}^{n}\mathbf{\sigma}_{j}	 \mathbf{x}_{j},\mathbf{u}\right \rangle \rvert ]\\
			& = \mathbb{E}_{\mathbf{\sigma}} [\sup_{\mathbf{u}\in\mathcal{H}}
			\lvert \frac{1}{n_2}\left \langle \sum_{j=1}^{n}\mathbf{\sigma}_{j}	 \mathbf{x}_{j}^{(1)},\mathbf{u}^{(1)}\right \rangle \rvert ] \\&\quad +\mathbb{E}_{\mathbf{\sigma}} [\sup_{\mathbf{u}\in\mathcal{H}}
			\lvert \frac{1}{n_2}\left \langle \sum_{j=1}^{n}\mathbf{\sigma}_{j}	 \mathbf{x}_{j}^{(2)},\mathbf{u}^{(2)}\right \rangle\rvert ]\\
			&= \mathbb{E}_{\mathbf{\sigma}} [\sup_{\mathbf{u}\in\mathcal{H}}\lvert \frac{1}{n_2}\left \langle \sum_{j=1}^{n}\mathbf{\sigma}_{j}	 \mathbf{x}_{j}^{(1)},\mathbf{u}^{(1)}-\mathbf{u}^{0} \right \rangle \rvert ]
			\\&\quad +\mathbb{E}_{\mathbf{\sigma}} [\sup_{\mathbf{u}\in\mathcal{H}}
			\lvert \frac{1}{n_2}\left \langle \sum_{j=1}^{n}\mathbf{\sigma}_{j}	 \mathbf{x}_{j}^{(2)},\mathbf{u}^{(2)}\right \rangle \rvert ]\\
		\end{aligned}
	\end{equation}

	\begin{equation}
		\begin{aligned}
			&\leq \mathbb{E}_{\mathbf{\sigma}} [\sup_{\mathbf{u}\in\mathcal{H}} \frac{1}{n_2}\lVert \sum_{j=1}^{n}\mathbf{\sigma}_{j}	 \mathbf{x}_{j}^{(1)}\rVert  \lVert \mathbf{u}^{(1)}-\mathbf{u}^{0} \rVert ]
			\\&\quad +\mathbb{E}_{\mathbf{\sigma}} [\sup_{\mathbf{u}\in\mathcal{H}}\frac{1}{n_2}\lVert  \sum_{j=1}^{n}\mathbf{\sigma}_{j}	 \mathbf{x}_{j}^{(2)}\rVert  \lVert \mathbf{u}^{(2)} \rVert  ].\\
		\end{aligned}
	\end{equation}
	In the FIC-MR algorithm, recall that we let $\lVert \mathbf{u}\rVert \leq \gamma_4$, there is
	\begin{equation*}
		\begin{aligned}
			&\lVert \mathbf{u}\rVert ^{2}	\leq \gamma_4^{2}
			\Rightarrow\lVert \mathbf{u}^{(1)}\rVert ^{2} +	\lVert \mathbf{u}^{(2)}\rVert ^{2}\leq \gamma_4^{2}
			\\&	\Rightarrow\lVert \mathbf{u}^{(2)}\rVert ^{2}\leq \gamma_4^{2}- \lVert \mathbf{u}^{(1)}\rVert ^{2}.\\
		\end{aligned}
	\end{equation*}
	Note that $ \sup_{\mathbf{u}\in\mathcal{H}}\lVert  \mathbf{u}^{(1)}-\mathbf{u}^{0}\rVert \leq \Delta$ and $\lVert  \mathbf{u}^{0}\rVert  = \gamma_0$.  According to the properties of norm, one can see that 
	\begin{equation*}
		\begin{aligned}
			&\lVert  \mathbf{u}^{(0)}\rVert -\lVert  \mathbf{u}^{1}\rVert \leq \lVert \mathbf{u}^{(0)} - \mathbf{u}^{(1)} \rVert  \leq \Delta\\
			&\Rightarrow -\lVert  \mathbf{u}^{1}\rVert \leq \Delta-\lVert  \mathbf{u}^{(0)}\rVert \leq  \Delta-\gamma_0\\
			&\Rightarrow-\lVert  \mathbf{u}^{1}\rVert ^{2}\leq -(\gamma_0-\Delta)^{2}.\\
		\end{aligned}
	\end{equation*}
	Thus, we have
	\begin{equation}
		\label{mu_2bound}
		\begin{aligned}
			&\lVert \mathbf{u}^{(2)}\rVert ^{2}\leq \gamma_4^{2} -(\gamma_0-\Delta)^{2}\Rightarrow \lVert \mathbf{u}^{(2)}\rVert \leq \sqrt{\gamma_4^{2}- (\gamma_0-\Delta)^{2}}.\\			
		\end{aligned}
	\end{equation}
	Referring to Eq.\,(\ref{In_10}), and considering $\lVert \mathbf{x}\rVert \leq \gamma$, we obtain
	\begin{equation}
		\label{sigma and x1 bound}
		\begin{aligned}
			\mathbb{E}_{\mathbf{\sigma}} [\lVert  \sum_{j=1}^{n}\mathbf{\sigma}_{j}\mathbf{x}_{j}^{(1)}  \rVert  ]
			& \leq  [\mathbb{E}_{\mathbf{\sigma}} [ \lVert  \sum_{j=1}^{n}\mathbf{\sigma}_{j}	 \mathbf{x}_{j}^{(1)}  \rVert ^{2} ]]^{1/2}
			\\&\leq [  \sum_{j=1}^{n}
			\lVert \mathbf{x}_{j}^{(1)}\rVert ^{2} ]^{1/2}
			\\&\leq  \gamma \sqrt{n_2}.
		\end{aligned}
	\end{equation}
	Similarly, there is 
	\begin{equation}
		\label{sigma and x2 bound}
		\begin{aligned}
			\mathbb{E}_{\mathbf{\sigma}} [\lVert  \sum_{j=1}^{n}\mathbf{\sigma}_{j}\mathbf{x}_{j}^{(2)}  \rVert  ] \leq \gamma \sqrt{n_2}.
		\end{aligned}
	\end{equation}
	We know that $ \sup_{\mathbf{u}\in\mathcal{H}}\lVert  \mathbf{u}^{(1)}-\mathbf{u}^{0}\rVert \leq \Delta$. By Inequalities.\,(\ref{term11}), (\ref{mu_2bound}), (\ref{sigma and x1 bound}), and (\ref{sigma and x2 bound}), we conclude
	\begin{equation}
		\label{Term2}
		\begin{aligned}
			&\text{Term-\circled{2}}
			\leq  \frac{2\gamma}{\sqrt{n_2}} (\Delta+ \sqrt{\gamma_4^{2}- (\gamma_0-\Delta)^{2}}).
		\end{aligned}
	\end{equation}
	Since $\lVert  \mathbf{u}\rVert  \leq \gamma_4$ and $\mathbf{u}=[\mathbf{u}^{(1)},\mathbf{u}^{(2)}]$, we suppose $\lVert  \mathbf{u}^{(1)}\rVert  \leq \gamma_4' \leq\gamma_4 $ and $\lVert  \mathbf{u}^{(2)}\rVert  \leq \gamma_4''\leq\gamma_4$. 
	For the $\text {Term-\circled{3}}$ of (\ref{R_ineqaility1}), it can be seen that 
	\begin{equation}
		\label{Term3}
		\begin{aligned}
			&\mathbb{E}_{\mathbf{\sigma}} [\sup_{\mathbf{u}\in\mathcal{H}}\lvert \frac{1}{n_2}\sum_{j=1}^{n} \mathbf{\sigma}_{j}\lVert  \mathbf{u}\rVert ^2\rvert]  \\
			&=  \mathbb{E}_{\mathbf{\sigma}} [\sup_{\mathbf{u}\in\mathcal{H}}\lvert \frac{1}{n_2}\sum_{j=1}^{n} \mathbf{\sigma}_{j}(\lVert  \mathbf{u}^{(1)}\rVert ^2+\lVert  \mathbf{u}^{(2)}\rVert ^2)\rvert]\\
			&= \mathbb{E}_{\mathbf{\sigma}} [\sup_{\mathbf{u}\in\mathcal{H}}\lvert \frac{1}{n_2}\sum_{j=1}^{n} \mathbf{\sigma}_{j}(\lVert  \mathbf{u}^{(2)}\rVert ^2- \lVert  \mathbf{u}^{0}\rVert ^2+\lVert  \mathbf{u}^{(2)}\rVert ^2)\rvert]\\
			&=\mathbb{E}_{\mathbf{\sigma}} [\sup_{\mathbf{u}\in\mathcal{H}}\lvert \frac{1}{n_2}\sum_{j=1}^{n} \mathbf{\sigma}_{j}( (\mathbf{u}^{(1)}- \mathbf{u}^{0})(\mathbf{u}^{(1)}+\mathbf{u}^{0})^{T}+\lVert  \mathbf{u}^{(2)}\rVert ^2)\rvert]\\
			&\leq\mathbb{E}_{\mathbf{\sigma}}[\sup_{\mathbf{u}\in\mathcal{H}}\frac{1}{n_2}\left\vert\sum_{j=1}^{n} \mathbf{\sigma}_{j}\right \vert \lVert \mathbf{u}^{(1)}- \mathbf{u}^{0}\rVert  \lVert \mathbf{u}^{(1)}+\mathbf{u}^{0}\rVert ] \\&\quad+\mathbb{E}_{\mathbf{\sigma}}[\sup_{\mathbf{u}\in\mathcal{H}}\frac{1}{n_2}\left\vert\sum_{j=1}^{n} \mathbf{\sigma}_{j}\right \vert \lVert  \mathbf{u}^{(2)}\rVert ^2]\\
			& \leq  \frac{(\gamma_4' +\gamma_0) \Delta +\gamma_4''^2}{\sqrt{n_2}}.
		\end{aligned}  
	\end{equation}
	
	Then, according to the results from (\ref{Term1}), (\ref{Term2}) and (\ref{Term3}), we reformulate Inequality\,(\ref{R_ineqaility1}) as
	\begin{equation*}
		\begin{aligned}
			\mathfrak{R}_{n_2}(\mathcal{G}_{\mathbf{U}_{4}^{(s)}) } \leq 
			\frac{2\varepsilon}{\sqrt{n_2}},\\
		\end{aligned}
	\end{equation*}
	where $\varepsilon= \gamma(\Delta+ \sqrt{\gamma_4^{2}- (\gamma_0-\Delta)^{2}}+\frac{\gamma }{2})+\frac{ (\gamma_4' +\gamma_0 ) \Delta +\gamma_4''^2}{2}$.
	Finally, since $\widetilde{\mathfrak{R}}_{n_2}(\mathcal{G}_{\mathbf{U}_{4}^{(s)}})=\sup _{D_c \in \mathcal{X}_c^{n}} \mathfrak{R}_{n_2}(\mathcal{G}_{\mathbf{U}_{4}^{(s)}})$,
	we get
	\begin{equation*}
		\begin{aligned}
			\max_{s}{\widetilde{\mathfrak{R}}}_{n_2}(\mathcal{G}_{\mathbf{U}_{4}^{(s)}})  \leq
			\frac{2\varepsilon}{\sqrt{n_2}}.
		\end{aligned}
	\end{equation*}
	This proves the findings outlined in Lemma \ref{R_vallue_max1}.	
	
\end{proof}

\bibliographystyle{IEEEtran}
\bibliography{IEEEabrv, Manuscript}

\begin{thebibliography}{10}
\providecommand{\url}[1]{#1}
\csname url@samestyle\endcsname
\providecommand{\newblock}{\relax}
\providecommand{\bibinfo}[2]{#2}
\providecommand{\BIBentrySTDinterwordspacing}{\spaceskip=0pt\relax}
\providecommand{\BIBentryALTinterwordstretchfactor}{4}
\providecommand{\BIBentryALTinterwordspacing}{\spaceskip=\fontdimen2\font plus
\BIBentryALTinterwordstretchfactor\fontdimen3\font minus
  \fontdimen4\font\relax}
\providecommand{\BIBforeignlanguage}[2]{{%
\expandafter\ifx\csname l@#1\endcsname\relax
\typeout{** WARNING: IEEEtran.bst: No hyphenation pattern has been}%
\typeout{** loaded for the language `#1'. Using the pattern for}%
\typeout{** the default language instead.}%
\else
\language=\csname l@#1\endcsname
\fi
#2}}
\providecommand{\BIBdecl}{\relax}
\BIBdecl

\bibitem{shi2000normalized}
J.~Shi and J.~Malik, ``Normalized cuts and image segmentation,'' \emph{{IEEE}
  Trans. Pattern Anal. Mach. Intell.}, vol.~22, no.~8, pp. 888--905, 2000.

\bibitem{li2019clustering}
F.~Li, Y.~Qian, J.~Wang, C.~Dang, and L.~Jing, ``Clustering ensemble based on
  sample's stability,'' \emph{Artif. Intell.}, vol. 273, pp. 37--55, 2019.

\bibitem{Mishro2021}
P.~K. Mishro, S.~Agrawal, R.~Panda, and A.~Abraham, ``A novel type-2 fuzzy
  c-means clustering for brain mr image segmentation,'' \emph{{IEEE} Trans.
  Cybern.}, vol.~51, no.~8, pp. 3901--3912, 2021.

\bibitem{Peng2022}
Z.~Peng, H.~Liu, Y.~Jia, and J.~Hou, ``Adaptive attribute and structure
  subspace clustering network,'' \emph{{IEEE} Trans. Image Process.}, vol.~31,
  pp. 3430--3439, 2022.

\bibitem{Zhou2025}
J.~Zhou, C.~Huang, C.~Gao, Y.~Wang, W.~Pedrycz, and G.~Yuan, ``Reweighted
  subspace clustering guided by local and global structure preservation,''
  \emph{{IEEE} Trans. Cybern.}, vol.~55, no.~3, pp. 1436--1449, 2025.

\bibitem{fahy2018ant}
C.~Fahy, S.~Yang, and M.~Gongora, ``Ant colony stream clustering: A fast
  density clustering algorithm for dynamic data streams,'' \emph{{IEEE} Trans.
  Cybern.}, vol.~49, no.~6, pp. 2215--2228, 2018.

\bibitem{sui2020dynamic}
J.~Sui, Z.~Liu, L.~Liu, A.~Jung, and X.~Li, ``Dynamic sparse subspace
  clustering for evolving high-dimensional data streams,'' \emph{{IEEE} Trans.
  Cybern.}, vol.~52, no.~6, pp. 4173--4186, 2020.

\bibitem{chen2022two}
J.~Chen, Z.~Wang, S.~Yang, and H.~Mao, ``Two-stage sparse representation
  clustering for dynamic data streams,'' \emph{{IEEE} Trans. Cybern.}, vol.~53,
  no.~10, pp. 6408--6420, 2022.

\bibitem{Urio2025}
A.~Urio-Larrea, H.~Camargo, G.~Lucca, T.~Asmus, C.~Marco-Detchart, L.~Schick,
  C.~Lopez-Molina, J.~Andreu-Perez, H.~Bustince, and G.~P. Dimuro, ``Data
  stream clustering: Introducing recursively extendable aggregation functions
  for incremental cluster fusion processes,'' \emph{{IEEE} Trans. Cybern.},
  vol.~55, no.~3, pp. 1421--1435, 2025.

\bibitem{cheng2025tabfsbench}
Z.~Cheng, Z.~Jia, Z.~Zhou, Y.~Li, and L.~Guo, ``Tabfsbench: Tabular benchmark
  for feature shifts in open environments,'' in \emph{in Proc. 42th Int. Conf.
  Mach. Learn.}, vol. 267, Vancouver, BC, Canada, 2025.

\bibitem{yan2015egocentric}
Y.~Yan, E.~Ricci, G.~Liu, and N.~Sebe, ``Egocentric daily activity recognition
  via multitask clustering,'' \emph{{IEEE} Trans. Image Process.}, vol.~24,
  no.~10, pp. 2984--2995, 2015.

\bibitem{ors2020event}
F.~K. {\"O}rs, S.~Yeniterzi, and R.~Yeniterzi, ``Event clustering within news
  articles,'' in \emph{in Proc. Workshop Automated Extraction Socio-political
  Events News (AESPEN)}, Marseille, France, 2020, pp. 63--68.

\bibitem{hou2017learning}
B.~Hou, L.~Zhang, and Z.~Zhou, ``Learning with feature evolvable streams,'' in
  \emph{in Proc. Adv. Neural Inf. Process. Syst. (NeurIPS)}, vol.~30, Long
  Beach, CA, USA, 2017, pp. 1417--1427.

\bibitem{hou2018one}
C.~Hou and Z.~Zhou, ``One-pass learning with incremental and decremental
  features,'' \emph{{IEEE} Trans. Pattern Anal. Mach. Intell.}, vol.~40,
  no.~11, pp. 2776--2792, 2018.

\bibitem{sadreddin2021incremental}
A.~Sadreddin and S.~Sadaoui, ``Incremental feature learning for infinite
  data,'' \emph{arXiv preprint arXiv:2108.02932}, 2021.

\bibitem{hou2019safe}
C.~Hou, L.~Zeng, and D.~Hu, ``Safe classification with augmented features,''
  \emph{{IEEE} Trans. Pattern Anal. Mach. Intell.}, vol.~41, no.~9, pp.
  2176--2192, 2019.

\bibitem{zhang2020learning}
Z.~Zhang, P.~Zhao, Y.~Jiang, and Z.~Zhou, ``Learning with feature and
  distribution evolvable streams,'' in \emph{in Proc. 37th Int. Conf. Mach.
  Learn. (ICML)}, vol. 119, Virtual, 2020, pp. 11\,317--11\,327.

\bibitem{hou2023incremental}
C.~Hou, S.~Gu, C.~Xu, and Y.~Qian, ``Incremental learning for simultaneous
  augmentation of feature and class,'' \emph{{IEEE} Trans. Pattern Anal. Mach.
  Intell.}, vol.~45, no.~12, pp. 14\,789--14\,806, 2023.

\bibitem{shu2023incremental}
W.~Shu, T.~Chen, D.~Cao, and W.~Qian, ``Incremental feature selection based on
  uncertainty measure for dynamic interval-valued data,'' \emph{Int. J. Mach.
  Learn. Cybern.}, vol.~15, pp. 1453--1472, 2024.

\bibitem{antos2005improved}
A.~Antos, ``Improved minimax bounds on the test and training distortion of
  empirically designed vector quantizers,'' \emph{{IEEE} Trans. Inf. Theory},
  vol.~51, no.~11, pp. 4022--4032, 2005.

\bibitem{tang2016lloyd}
C.~Tang and C.~Monteleoni, ``On lloyd’s algorithm: new theoretical insights
  for clustering in practice,'' in \emph{in Proc. 19th Int. Conf. Artif.
  Intell. Statist. (AISTATS)}, vol.~51, Cadiz, Spain, 2016, pp. 1280--1289.

\bibitem{zhang2023imbalanced}
J.~Zhang, H.~Tao, and C.~Hou, ``Imbalanced clustering with theoretical learning
  bounds,'' \emph{{IEEE} Trans. Knowl. Data Eng.}, vol.~35, no.~9, pp.
  9598--9612, 2023.

\bibitem{biau2008performance}
G.~Biau, L.~Devroye, and G.~Lugosi, ``On the performance of clustering in
  hilbert spaces,'' \emph{{IEEE} Trans. Inf. Theory}, vol.~54, no.~2, pp.
  781--790, 2008.

\bibitem{li2021sharper}
S.~Li and Y.~Liu, ``Sharper generalization bounds for clustering,'' in \emph{in
  Proc. 38th Int. Conf. Mach. Learn. (ICML)}, vol. 139, Virtual, 2021, pp.
  6392--6402.

\bibitem{liu2021refined}
Y.~Liu, ``Refined learning bounds for kernel and approximate $k$-means,'' in
  \emph{in Proc. Adv. Neural Inf. Process. Syst. (NeurIPS)}, vol.~34, Virtual,
  2021, pp. 6142--6154.

\bibitem{yin2022randomized}
R.~Yin, Y.~Liu, W.~Wang, and D.~Meng, ``Randomized sketches for clustering:
  Fast and optimal kernel $k$-means,'' in \emph{in Proc. Adv. Neural Inf.
  Process. Syst. (NeurIPS)}, vol.~35, New Orleans, LA, USA, 2022, pp.
  6424--6436.

\bibitem{liang2024consistency}
W.~Liang, C.~Tang, X.~Liu, Y.~Liu, J.~Liu, E.~Zhu, and K.~He, ``On the
  consistency and large-scale extension of multiple kernel clustering,''
  \emph{{IEEE} Trans. Pattern Anal. Mach. Intell.}, vol.~46, no.~10, pp.
  6935--6947, 2024.

\bibitem{li2023understanding}
S.~Li, S.~Ouyang, and Y.~Liu, ``Understanding the generalization performance of
  spectral clustering algorithms,'' in \emph{in Proc. 37th AAAI Conf. Artif.
  Intell. (AAAI)}, Washington, DC, USA, 2023, pp. 8614--8621.

\bibitem{Hou2023}
C.~Hou, S.~Gu, C.~Xu, and Y.~Qian, ``Incremental learning for simultaneous
  augmentation of feature and class,'' \emph{{IEEE} Trans. Pattern Anal. Mach.
  Intell.}, vol.~45, no.~12, pp. 14\,789--14\,806, 2023.

\bibitem{Gu2025}
S.~Gu, C.~Xu, D.~Hu, and C.~Hou, ``Adaptive learning for dynamic features and
  noisy labels,'' \emph{{IEEE} Trans. Pattern Anal. Mach. Intell.}, vol.~47,
  no.~2, pp. 1219--1237, 2025.

\bibitem{pmlr-v119-muzellec20a}
B.~Muzellec, J.~Josse, C.~Boyer, and M.~Cuturi, ``Missing data imputation using
  optimal transport,'' in \emph{in Proc. 37th Int. Conf. Mach. Learn. (ICML)},
  vol. 119, 2020, pp. 7130--7140.

\bibitem{pmlr-v80-ye18c}
H.-J. Ye, D.-C. Zhan, Y.~Jiang, and Z.-H. Zhou, ``Rectify heterogeneous models
  with semantic mapping,'' in \emph{in Proc. 35th Int. Conf. Mach. Learn.
  (ICML)}, vol.~80, 2018, pp. 5630--5639.

\bibitem{kifer2004detecting}
D.~Kifer, S.~Ben-David, and J.~Gehrke, ``Detecting change in data streams,'' in
  \emph{in Proc. 30th Int. Conf. Very Large Data Bases (VLDB)}, vol.~4,
  Toronto, Canada, 2004, pp. 180--191.

\bibitem{ben2010theory}
S.~Ben-David, J.~Blitzer, K.~Crammer, A.~Kulesza, F.~Pereira, and J.~W.
  Vaughan, ``A theory of learning from different domains,'' \emph{Mach.
  Learn.}, vol.~79, pp. 151--175, 2010.

\bibitem{mohri2012new}
M.~Mohri and A.~Mu{\~n}oz~Medina, ``New analysis and algorithm for learning
  with drifting distributions,'' in \emph{in Proc. 23rd Int. Conf. Algorithmic
  Learn. Theory (ALT)}, vol. 7568, Lyon, France, 2012, pp. 124--138.

\bibitem{dueck2007non}
D.~Dueck and B.~J. Frey, ``Non-metric affinity propagation for unsupervised
  image categorization,'' in \emph{in Proc. 11th IEEE Int. Conf. Comput. Vis.
  (ICCV)}, Rio de Janeiro, Brazil, 2007, pp. 1--8.

\bibitem{tao2018multiview}
H.~Tao, C.~Hou, D.~Yi, and J.~Zhu, ``Multiview classification with cohesion and
  diversity,'' \emph{{IEEE} Trans. Cybern.}, vol.~50, no.~5, pp. 2124--2137,
  2018.

\bibitem{SIFT}
D.~G. Lowe, ``Distinctive image features from scale-invariant keypoints,''
  \emph{Int. J. Comput. Vis.}, vol.~60, no.~2, pp. 91--110, 2004.

\bibitem{SURF}
H.~Bay, T.~Tuytelaars, and L.~V. Gool, ``{SURF:} speeded up robust features,''
  in \emph{in Proc. 9th Eur. Conf. Comput. Vis. (ECCV)}, vol. 3951, Graz,
  Austria, 2006, pp. 404--417.

\bibitem{banos2014dealing}
O.~Banos, M.~A. Toth, M.~Damas, H.~Pomares, and I.~Rojas, ``Dealing with the
  effects of sensor displacement in wearable activity recognition,''
  \emph{Sensors}, vol.~14, no.~6, pp. 9995--10\,023, 2014.

\bibitem{yang2024say}
J.~Yang, S.~Ma, Z.~Zhang, Y.~Li, S.~Xiao, J.~Wen, W.~Lu, and X.~Gao, ``Say no
  to redundant information: Unsupervised redundant feature elimination for
  active learning,'' \emph{{IEEE} Trans. Multimedia}, vol.~26, pp. 7721--7733,
  2024.

\bibitem{mavroeidis2014feature}
D.~Mavroeidis and E.~Marchiori, ``Feature selection for k-means clustering
  stability: theoretical analysis and an algorithm,'' \emph{Data Min. Knowl.
  Discov.}, vol.~28, no.~4, pp. 918--960, 2014.

\bibitem{oneto2015local}
L.~Oneto, A.~Ghio, S.~Ridella, and D.~Anguita, ``Local rademacher complexity:
  Sharper risk bounds with and without unlabeled samples,'' \emph{Neural
  Netw.}, vol.~65, pp. 115--125, 2015.

\bibitem{foster2019vector}
D.~J. Foster and A.~Rakhlin, ``l $\infty$ vector contraction for rademacher
  complexity,'' \emph{arXiv preprint arXiv:1911.06468}, vol.~6, 2019.

\bibitem{lei2019data}
Y.~Lei, {\"U}.~Dogan, D.-X. Zhou, and M.~Kloft, ``Data-dependent generalization
  bounds for multi-class classification,'' \emph{{IEEE} Trans. Inf. Theory},
  vol.~65, no.~5, pp. 2995--3021, 2019.

\bibitem{bousquet2002bennett}
O.~Bousquet, ``A bennett concentration inequality and its application to
  suprema of empirical processes,'' \emph{C. R. Math.}, vol. 334, no.~6, pp.
  495--500, 2002.

\end{thebibliography}

\end{document}